\begin{document}
\newcommand{\val}{\mathrm{val}}
\newcommand{\VAL}{\mathrm{Val}}
\newcommand{\rk}{\mathrm{rk}}
\newcommand{\rkexp}{\mathrm{rk}_{\mathrm{exp}}}
\newcommand{\N}{\mathbb{N}}
\newcommand{\Z}{\mathbb{Z}}
\newcommand{\Q}{\mathbb{Q}}
\newcommand{\R}{\mathbb{R}}
\newcommand{\C}{\mathbb{C}}
\newcommand{\I}{\mathrm{I}}
\newcommand{\II}{\mathrm{II}}
\newcommand{\III}{\mathrm{III}}
\newcommand{\IV}{\mathrm{IV}}
\newcommand{\V}{\mathrm{V}}
\newcommand{\VI}{\mathrm{VI}}
\newcommand{\VII}{\mathrm{VII}}
\newcommand{\VIII}{\mathrm{VIII}}
\newcommand{\IX}{\mathrm{IX}}

\newcommand{\E}{\mathrm{E}}

\theoremstyle{definition}

\newtheorem{thm}{Theorem}[section]
\newtheorem*{mainthm}{Theorem~\ref{thm1}}
\newtheorem{cor}[thm]{Corollary}
\newtheorem{lem}[thm]{Lemma}
\newtheorem{prop}[thm]{Proposition}
\newtheorem{defi}[thm]{Definition}
\theoremstyle{remark}
\newtheorem{rem}[thm]{\bf Remark}
\newtheorem{exa}[thm]{\bf Example}

\title{Tropicalization of 1-tacnodal curves on toric surfaces}
\author{Takuhiro TAKAHASHI}
\address{Mathematical Institute, Tohoku University, Aoba, Sendai, Miyagi, 980-8578, Japan}
\email{takahashi.takuhiro.q7@dc.tohoku.ac.jp}
\maketitle
\begin{abstract}

A degeneration of a singular curve on a toric surface, 
called a tropicalization, was constructed by E. Shustin. 
He classified the degeneration of 1-cuspidal curves 
using polyhedral complexes called tropical curves. 
In this paper, 
we define a tropical version of a 1-tacnodal curve, that is, 
a curve having exactly one singular point 
whose topological type is $A_3$, and by applying 
the tropicalization method, 
we classify tropical curves which correspond to 1-tacnodal curves.
\end{abstract}

\tableofcontents

\section{Introduction}

Tropical geometry is a modern study area on a polyhedral complex, 
which can be obtained as the non-linear locus of 
a polynomial over the max-plus algebra. 
Among previous studies on tropical geometry, 
the most famous result is an application to the enumerative geometry 
on toric surfaces by G. Mikhalkin \cite{M}. 
T. Nishinou and B. Siebert \cite{NS} also showed that 
the enumerative problem 
on toric varieties equals to the enumeration of 
a certain type of tropical curves. 
These results are obtained 
by connecting tropical geometry with degeneration of nodal curves.

In order to apply tropical geometry to general singular curves, 
E. Shustin \cite{S} presented a degeneration of a curve, 
called a \textit{tropicalization},  
and showed that the tropicalization of a curve 
which has only one singular point whose topological type is $A_2$ 
(he called such a curve a \textit{1-cuspidal curve} for simplicity)
is related to a certain tropical curve, 
called a tropical 1-cuspidal curve. 
Furthermore, using the theory of patchworking, 
he showed that the enumeration of 1-cuspidal curves 
reduced into that of the tropical 1-cuspidal curves.

In this paper, 
we apply the tropicalization method to 1-tacnodal curves, 
that is, 
curves which have exactly one singular point 
whose topological type is $A_3$, on a toric surface,
and classify them using tropical curves.

To state our result, we prepare some terminology in tropical geometry. 
Let $F$ be a polynomial in two variables 
over the field of convergent Puiseux series over $\C$, 
denoted by $K:=\C\{\{t\}\}$. 
Then we can define a valuation 
$\val: K^{*} := K\setminus \{ 0 \} \to \R$ 
as follows. 
For a given element $b(t) \in K^*$, 
take the minimal exponent $q$ of $b(t)$ in $t$, 
then define $\val(b(t)):=-q$. 
We set 
\[
\VAL: (K^{*})^2 \to \R^2 ; (z,w) \mapsto (\val(z),\val(w)). 
\]
We call the closure 
\[
T_F:=\mathrm{Closure}(
\VAL(\{ F=0 \} \cap (K^*)^2))
\subset \R^2
\]
of the curve defined by $F$ in $(K^*)^2$ 
the \textit{tropical amoeba} defined by $F$. 

Each tropical amoeba $T$ has a positive integer $\rk(T)$ 
called a \textit{rank}, 
which, roughly speaking, is the dimension of 
the space of tropical curves which are combinatorially same as $T$. 
The formal definition of the rank will be given in Subsection 2.1. 
We will also give the definition of a {\it tropical 1-tacnodal curve} in  
Definition~\ref{tropA3} 
as a tropical analogy of a classical 1-tacnodal curve.

The following statement is the main result in this paper. 

\begin{mainthm}\it
Let $F \in K[z,w]$ be a polynomial 
which defines an irreducible 1-tacnodal curve. 
If the rank of the tropical amoeba $T_F$ defined by $F$ 
is more than or equal to 
the number of the lattice points of the Newton polytope of 
$F$ minus four, 
then $T_F$ is a tropical 1-tacnodal curve. 
\end{mainthm}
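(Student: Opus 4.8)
The plan is to follow Shustin's tropicalization machinery for $A_2$ and adapt it to the $A_3$ singularity. First I would write $F=\sum_{(i,j)} c_{ij}(t)\, z^i w^j$ with $c_{ij}(t)\in K$, and record the Newton polytope $\Delta=\Delta(F)$ together with its lattice points; let $n=\#(\Delta\cap\Z^2)$ be the number of those points, so that the hypothesis reads $\rk(T_F)\ge n-4$. The coefficients $c_{ij}$ define a function $\nu\colon \Delta\cap\Z^2\to\R\cup\{\infty\}$ by $\nu(i,j)=-\val(c_{ij})$ (with $\infty$ when $c_{ij}=0$), and the tropical amoeba $T_F$ is the corner locus of the tropical polynomial $\min_{(i,j)}\{\nu(i,j)+ix+jy\}$; equivalently it is dual to the regular subdivision of $\Delta$ induced by $\nu$. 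The rank $\rk(T_F)$, per Subsection~2.1, measures the dimension of the family of such Newton functions giving a combinatorially equivalent amoeba, i.e.\ essentially the number of vertices (or top-dimensional cells) appearing in that subdivision minus normalizations.

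**The dimension count.**

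The heart of the argument is a parameter count. A general irreducible curve with Newton polytope $\Delta$ moves in a linear system of projective dimension $n-1$ over $K$; imposing a single $A_3$ point is three independent conditions (tangency to order four, i.e.\ one node plus two further conditions, or directly $\mathrm{codim}=3$ for the $A_3$ stratum in the discriminant), so 1-tacnodal curves with Newton polytope $\Delta$ form a family of dimension $n-4$. I would argue that the tropicalization map sends this family to tropical amoebae, and that the hypothesis $\rk(T_F)\ge n-4$ forces the image amoeba to be as ``large'' as possible — its dual subdivision must be as fine as the constraints allow. Concretely: a tropical amoeba of rank $r$ comes from a subdivision with a controlled number of cells; a non-1-tacnodal tropical amoeba (one whose dual subdivision fails the combinatorial pattern defining a tropical 1-tacnodal curve in Definition~\ref{tropA3}) can only arise as the tropicalization of a 1-tacnodal curve if that curve lies in a \emph{proper} subfamily, which would drop the rank below $n-4$. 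So one shows the contrapositive: if $T_F$ is not a tropical 1-tacnodal curve, then the locus of 1-tacnodal $F$ with that amoeba has dimension strictly less than $n-4$, hence $\rk(T_F)<n-4$.

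**The key technical step.**

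To make this precise I would localize at the singular point. After a monomial change of coordinates in $(K^*)^2$ (which acts on $T_F$ by an integral-affine transformation and preserves both rank and the 1-tacnodal property), move the $A_3$ point to a standard chart; then the condition that $F$ have an $A_3$ singularity translates, via the Newton–Puiseux expansion, into the vanishing of the leading coefficients of $F$ and of a resultant-type expression up to order determined by $A_3$. Tracking valuations through these algebraic conditions — this is exactly where Shustin's analysis for $A_2$ gets replaced by the $A_3$ computation — tells us which lattice points of $\Delta$ near the dual cell of the singular point must carry coefficients with prescribed (or coupled) valuations. Each such forced coincidence of valuations is one fewer free parameter, i.e.\ it either coarsens the dual subdivision at that location or imposes a linear relation among the $\nu(i,j)$, in both cases lowering the rank. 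The combinatorial types of subdivision compatible with the full $A_3$ tangency, and with rank exactly $n-4$, are precisely those enumerated in Definition~\ref{tropA3}.

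**The main obstacle.**

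The hard part will be the local-to-global bookkeeping at the singular point: controlling the Newton–Puiseux expansion of an $A_3$ branch finely enough to see \emph{exactly} which valuation coincidences are forced (as opposed to merely which are possible), and checking that no ``accidental'' extra degeneration elsewhere on the curve can compensate — i.e.\ that one cannot trade a loss of rank near the singularity for a gain of rank far from it. Handling this cleanly will likely require the patchworking/refinement estimates from \cite{S} together with a careful case analysis of the finitely many ways an $A_3$ point can sit inside (or on the boundary of) a cell of the dual subdivision; the boundary cases, where the singular point tropicalizes to an edge or vertex of $T_F$ rather than to the interior of a two-cell, are the ones most at risk of hidden dimension drops and will need the most care.
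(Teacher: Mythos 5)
Your central step has a genuine gap. The pivotal claim --- ``if $T_F$ is not a tropical 1-tacnodal curve, then the locus of 1-tacnodal $F$ with that amoeba has dimension strictly less than $n-4$, hence $\rk(T_F)<n-4$'' (where $n=\sharp\Delta_{\Z}$) --- is a non sequitur. The rank $\rk(T_F)$ is by definition a purely combinatorial invariant of the dual subdivision (the dimension of the polyhedron of tropical curves inducing the same subdivision); it bears no a priori relation to the dimension of the family of classical 1-tacnodal curves tropicalizing to $T_F$. Converting your dimension count on $V(\Delta,A_3)$ into a statement about $\rk(T_F)$ would require a correspondence/lifting theorem between classical 1-tacnodal curves and tropical curves of the given combinatorial type, i.e.\ exactly the patchworking machinery, which the paper explicitly notes (Remark~\ref{further}) is not available for $A_3$. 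Moreover the theorem concerns an \emph{arbitrary} 1-tacnodal $F$ whose amoeba happens to have rank at least $n-4$, not the generic member of the $(n-4)$-dimensional family, so a dimension count on the classical side cannot by itself exclude any amoeba.

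The paper's proof runs on two components that your sketch does not supply. First, the rank hypothesis is turned into combinatorics via Shustin's comparison of rank and expected rank (Lemma~\ref{rklem}): after reducing to $\rk(T_F)=\sharp\Delta_{\Z}-4$, one bounds the numbers $N_{\ell}$ of $\ell$-gons and $N'_{2m}$ of parallel $2m$-gons in $S$ and the boundary defect $\sharp\partial\Delta_{\Z}-\sharp(V(S)\cap\partial\Delta)\in\{0,1\}$, which yields an explicit finite list of possible dual subdivisions (cases (A)--(D)). Second, since $F$ really has a tacnode, the tropicalization $C^{(0)}$ must exhibit it either as a regular singular point inside some $X(\Delta_i)$ or as an irregular one on an edge, the latter analyzed through refinements and deformation patterns on the exceptional polytope; explicit computations with the tacnode criterion of Lemma~\ref{tacnode} (Lemmata~\ref{tacpiece}, \ref{tacpiece2}, \ref{singedge}, \ref{refined} for existence, and Lemmata~\ref{nonreg}, \ref{nonirr}, Remark~\ref{onelength} for exclusion) then decide, polytope by polytope and edge by edge, which configurations can carry the singularity; the survivors are precisely those of Definition~\ref{tropA3}. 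Your ``key technical step'' gestures at the local valuation analysis, but without the rank/expected-rank reduction there is no finite case list to check, and the assertion that ``each forced coincidence of valuations lowers the rank'' has the logic reversed: the rank is the hypothesis, and what must be deduced is the local shape of the subdivision at the cell or edge receiving the singular point.
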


In \cite{S}, 
Shustin proved the 1-cuspidal version of this theorem 
and the statement that 
``\textit{for each tropical 1-cuspidal curve, 
we can calculate the number of classical curves 
degenerated into the 1-cuspidal curve 
by using the patchworking method}'', which means that 
the enumeration of 1-cuspidal curves on toric surfaces 
can be carried out by using tropical 1-cuspidal curves. 
The original aim of this paper is to enumerate 1-tacnodal curves 
on toric surfaces by the same method. 
However, 
it does not work unlike the studies for nodal and 1-cuspidal curves 
because the criterion of patchworking developed 
by Shustin \cite{Sglu, S} 
cannot be used in this case. 
We will discuss this in Remark~\ref{further} below.

We organize this paper as follows. 
In Subsection 2.1, 
we define some basic terminology on tropical geometry 
such as the dual subdivision and the rank of a tropical curve, 
and introduce a lemma on the rank of the tropical curves 
proved by E. Shustin.
In Subsection 2.2, 
we consider a necessary and sufficient condition 
for a complex curve to have a tacnode, 
and estimate the dimension of the space of the 1-tacnodal curves 
on a toric surface. 
In Subsection 2.3, 
we summarize the tropicalization and its refinement. 
In Section 3, we define tropical 1-tacnodal curves 
and discuss polytopes appearing 
in their dual subdivisions. 
We also study a reduced curve 
associated with a tropical 1-tacnodal curve. 
In Subsections 4.1 and 4.2, before the proof of Theorem~\ref{thm1}, 
we prepare some definition and lemmata 
on relation between singular curves and 
their Newton polytopes. 
The proof of Theorem~\ref{thm1} 
is carried out from Subsections 4.3 to 4.5.

\subsection*{Acknowledgment}
The author appreciates 
Professor Masaharu Ishikawa and Professor Takeo Nishinou 
for valuable discussions. 
He is also grateful to Professor Eugenii Shustin 
for giving comments on refinements of tropicalization. 

\section{Preliminaries}

A set in $\R^2$ is a \textit{(lattice) polyhedron} 
if it is the intersection of a finite number of half-spaces in 
$\mathbb{R}^2$ 
whose vertices are contained in the lattice 
$\mathbb{Z}^2 \subset \mathbb{R}^2$.  
A set is a \textit{polytope} in $\mathbb{R}^2$ if 
it is a compact polyhedron, 
that is, 
the convex hull of a finite number of lattice points. 
We call a facet of a polytope an \textit{edge}. 
Similarly, we call the 0-dimensional sub-polytope 
obtained as the corner of a polytope 
a \textit{vertex}. 
The \textit{boundary} 
$\partial \Delta$ 
is the union of all facets of $\Delta$. 
The \textit{interior} 
$\mathrm{Int}\Delta$ 
is defined by $\Delta \setminus \partial \Delta$. 

A polytope is said to be \textit{parallel} if the opposite edges 
have the same directional vector (up to orientation) 
and the same lattice length. 
A polytope is called an \textit{$m$-gon} if the number of its edges is $m$. 

Let 
$\Delta \subset \mathbb{R}^2$ 
be a polytope. 
We denote the interior lattice points of 
$\Delta$, $\mathrm{Int}\Delta$ and $\partial \Delta$ as  
$\Delta_{\Z}$, $\mathrm{Int}\Delta_{\Z}$ and $\partial \Delta_{\Z}$, 
respectively. 
That is,
\[
\Delta_{\Z}:=\Delta \cap \Z^2, \ \ \ 
\mathrm{Int}\Delta_{\Z}
:=\mathrm{Int}\Delta \cap \Z^2, \;\;\;
\partial \Delta_{\Z}
:=\partial \Delta \cap \Z^2. 
\]

For a polytope $\Delta \subset \R^2$, 
we can construct 
a polarized toric surface associated with 
$\Delta$ over $\C$, 
denoted by 
$(X(\Delta),D(\Delta))$, 
where $D(\Delta)$ is the polarization on $X(\Delta)$ 
associated with $\Delta$.

\subsection{Basics of tropical plane curves}

Throughout this paper, 
$K:=\mathbb{C}\{\{ t \}\}$ 
represents the field of convergent Puiseux series over $\mathbb{C}$. 
The field $K$ admits a non-Archimedean valuation 
\[
\val : K^{*} \to \Q; 
\sum_{k=k_0}^{\infty}b_k t^{\frac{k}{N}} \mapsto
-\frac{k_0}{N},   
\]
where $b_{k_0} \neq 0$. 
For a polynomial 
\[
F(z,w)=\sum_{(i,j) \in \Delta_{\mathbb{Z}}}c_{ij} z^i w^j 
\in K[z,w],  
\]
the sets 
\[
\mathrm{Supp}(F)
:=\{ (i,j) \in \mathbb{R}^2 ; c_{ij} \neq 0 \} 
\quad \text{and} \quad
N_F
:=\mathrm{Conv}(\mathrm{Supp}(F)) \subset \mathbb{R}^2
\]
are called 
the \textit{support} of $F$ 
and the \textit{Newton polytope} of $F$, respectively, 
where $\mathrm{Conv}(A)$ is the convex hull of $A$ in $\R^2$. 
In this paper, we always assume that 
the dimension of any polytope is $2$. 
Then the map $\VAL$ is defined by 
\[
	\VAL : (K^*)^2 \to \mathbb{R}^2 ;\ 
 	(z,w) \mapsto (\val (z), \val (w)). 
\]
The set $T_F$ is defined by 
\[
 	T_F
 	:=\mathrm{Closure}\bigl({\VAL(\{p \in (K^*)^2 ; F(p)=0\})}\bigr)
 	\subset \mathbb{R}^2,  
\]
where  
$\mathrm{Closure}(A)$ is the closure of $A$ 
with usual topology on $\mathbb{R}^2$. 
The set $T_F$ is called the \textit{tropical amoeba} 
defined by $F$. 

On the other hand, 
for $F$, 
the \textit{tropical polynomial} $\tau_F$ is defined by   
\[
\tau_F(x,y)
:=\max\{ \val(c_{ij})+ix+jy; (i,j) \in \mathrm{Supp}(F) \} 
\]
over the max-plus algebra. 
The non-linear locus of a polynomial over the max-plus algebra 
in two variables  
is called a \textit{tropical plane curve}, 
which is a 1-dimensional polyhedral complex in $\R^2$. 
It is known as Kapranov's Theorem \cite{K} 
that the tropical amoeba $T_F$ coincides with 
the tropical plane curve of the tropical polynomial $\tau_F$. 
Hence, $T_F$ has 
the structure of a 1-dimensional polyhedral complex in $\R^2$. 
The 1-simplex and 0-simplex of $T_F$ is called 
an \textit{edge} and a \textit{vertex} of $T_F$, respectively.

An edge $E$ of $T_F$ corresponds to the intersection of 
two linearity domains of $\tau_F$. 
If one of the linearity domain 
is defined by a term $a_{ij}+ix+jy$ of $\tau_F$ 
and the other is defined by a term $a_{i'j'}+i'x+j'y$ of $\tau_F$, 
then the \textit{weight} $w(E)$ of the edge $E$ is defined as 
the greatest common divisor of $i-i'$ and $j-j'$.

Now we introduce a subdivision of the Newton polytope $N_F$ 
which is dual to the tropical amoeba $T_F$. 
Let $\nu_F:N_F \to \R$ be the discrete Legendre transform of $\tau_F$, 
which is a continuous concave PL-function 
(see, for example, \cite[Chapter 1.5]{G}). 
Then we obtain a subdivision of $N_F$ consisting of 
the following three kinds of polytopes from $\nu_F$: 
\begin{itemize}
\item 
linearity domains of $\nu_F$: 
$\Delta_1, \cdots ,\Delta_N$,
\item 
1-dimensional polytopes: 
$\sigma_{ij}:=\Delta_i \cap \Delta_j \neq \emptyset, 
\neq \{ \mathrm{pt} \}$,
\item 
0-dimensional polytopes: 
$\Delta_{i_1} \cap \Delta_{i_2} \cap \Delta_{i_3} \neq \emptyset$. 
\end{itemize}
These polytopes give a subdivision of $N_F$, which we denote by $S_F$.
We call a $1$-dimensional and a $0$-dimensional sub-polytope of $N_F$ 
contained in $S_F$ 
an \textit{edge} and a \textit{vertex}, respectively.

The following claim is in \cite[Proposition 3.11]{M}. 

\begin{thm}[Duality Theorem]\label{DualThm}\it
There exists a correspondence between a tropical curve
$T_F$ 
with the weight $w(E)$ on each edge $E \subset T_F$ 
and the corresponding subdivision $S_F$ of $N_F$ 
in the following sense: 
 \begin{itemize}
 \item[(1)]
 the components of 
 $\mathbb{R}^2 \setminus T_F$ 
 are in 1-to-1 correspondence 
 with the vertices of the subdivision 
 $S_F$, 
 \item[(2)]
 the edges of 
 $T_F$ 
 are in 1-to-1 correspondence 
 with the edges of $S_F$ 
 so that an edge 
 $E \subset T_F$ 
 is dual to an orthogonal edge of 
 $S_F$ 
 having the lattice length equal to 
 $w(E)$ 
 \item[(3)]
 the vertices of 
 $T_F$ 
 are in 1-to-1 correspondence with 
 the polytopes 
 $\Delta_{1}, \dots, \Delta_{N}$
 of $S_F$ so that the valency of a vertex of 
 $T_F$ 
 is equal to the number of sides of the corresponding polytope.
 \end{itemize}
\end{thm}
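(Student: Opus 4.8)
The plan is to exploit the Legendre duality between the tropical polynomial $\tau_F$ and the concave PL-function $\nu_F$, which induces a dimension-reversing bijection between the faces of the two polyhedral structures. Write $a_{ij} := \val(c_{ij})$ for $(i,j) \in \mathrm{Supp}(F)$, so that $\tau_F(x,y) = \max\{a_{ij} + ix + jy\}$, and for each $(i,j)$ set
\[
R_{ij} := \{(x,y) \in \R^2 : a_{ij} + ix + jy \geq a_{i'j'} + i'x + j'y \text{ for all } (i',j') \in \mathrm{Supp}(F)\},
\]
the locus where the $(i,j)$-term realizes the maximum. The $R_{ij}$ cover $\R^2$, each is a convex polyhedron, and on its interior $\tau_F$ is linear with gradient $(i,j)$; by definition $T_F$ is the union of their boundaries, that is, the set where the maximum is attained by at least two terms. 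Since $\nu_F$ is the discrete Legendre transform of $\tau_F$, its graph over $N_F$ coincides with the upper (concave) boundary of the convex hull of the points $(i,j,a_{ij}) \in \R^3$, and $S_F$ is the subdivision of $N_F$ into the projections of the faces of this boundary. The heart of the argument is that this duality reverses the dimension of faces: a $k$-dimensional face of the graph of $\tau_F$ is carried to a $(2-k)$-dimensional face of the graph of $\nu_F$, and the resulting face-reversing map is an inclusion-reversing bijection of the full face posets.

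First I would prove (1). A region $R_{ij}$ is $2$-dimensional precisely when the point $(i,j,a_{ij})$ is a vertex of the upper boundary, equivalently when $(i,j)$ is a vertex of the subdivision $S_F$; the lower-dimensional $R_{ij}$ contribute no component of the complement. As the $2$-dimensional $R_{ij}$ are exactly the closures of the connected components of $\R^2 \setminus T_F$, sending a component to the vertex $(i,j)$ of $S_F$ on whose region it lies gives the required bijection.

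Next I would establish (2). An edge $E$ of $T_F$ lies on the common boundary $R_{ij} \cap R_{i'j'}$ of exactly two full-dimensional regions, so along $E$ we have $a_{ij} + ix + jy = a_{i'j'} + i'x + j'y$, that is $(i-i')x + (j-j')y = a_{i'j'} - a_{ij}$. Hence $E$ lies on a line with normal vector $(i-i', j-j')$, which is precisely the direction of the segment $\sigma := [(i,j),(i',j')]$ joining the two dual vertices, so $E \perp \sigma$. Under the duality $\sigma$ is an edge of $S_F$, and $E \mapsto \sigma$ is a bijection between the edges of $T_F$ and the edges of $S_F$. Finally the lattice length of $\sigma$ equals $\gcd(|i-i'|,|j-j'|)$, which is exactly the weight $w(E)$, defined above as the greatest common divisor of $i-i'$ and $j-j'$, giving the required length condition.

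It remains to prove (3). A vertex $V$ of $T_F$ is a point where at least three regions $R_{i_1 j_1}, \dots, R_{i_m j_m}$ meet, so all the corresponding terms attain the common maximal value there; dually the lattice points $(i_1,j_1), \dots, (i_m,j_m)$ are exactly the vertices of a single $2$-dimensional cell $\Delta_k$ of $S_F$, and $V \mapsto \Delta_k$ is the bijection with the cells $\Delta_1,\dots,\Delta_N$. The edges of $T_F$ emanating from $V$ are the walls $R_{i_a j_a} \cap R_{i_{a+1} j_{a+1}}$ between cyclically adjacent regions, and by (2) these correspond bijectively to the edges of $\Delta_k$; therefore the valency of $V$ equals the number of sides of $\Delta_k$. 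The main obstacle is the face-reversing bijection underlying all three parts: one must check that the concavity (regularity) of $\nu_F$ rules out spurious cells and that adjacency of regions of $\tau_F$ corresponds exactly to adjacency of vertices within a single cell of $S_F$. Once this Legendre duality is secured, the orthogonality, lattice-length and valency statements follow from the direct computations above.
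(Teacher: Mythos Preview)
Your argument is correct and follows the standard Legendre-duality route: the regions $R_{ij}$ give the normal fan of the lifted polytope, and the face-reversing bijection between this fan and the regular subdivision $S_F$ yields all three parts, with the orthogonality and lattice-length claims read off from the defining equation $(i-i')x+(j-j')y=\text{const}$ of each wall.

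There is nothing to compare against, however, because the paper does not prove this theorem at all: it is quoted verbatim as \cite[Proposition~3.11]{M} and used as a black box. Your write-up is essentially the argument one finds in Mikhalkin's paper (or in any standard treatment of the duality between a tropical hypersurface and its dual Newton subdivision). One small point worth tightening in your part~(3): the lattice points $(i_1,j_1),\dots,(i_m,j_m)$ whose terms tie at a vertex $V$ are all the lattice points of the dual cell $\Delta_k$, not only its vertices; the vertices of $\Delta_k$ are precisely those among them for which $R_{i_aj_a}$ is two-dimensional, i.e.\ the full-dimensional regions meeting at $V$. This does not affect your valency count, since the edges emanating from $V$ are indeed in bijection with the sides of $\Delta_k$, but the phrasing ``are exactly the vertices of a single $2$-dimensional cell'' is slightly imprecise as stated.
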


We call the set $S_F$ 
the \textit{dual subdivision} of $T_F$. 
By Theorem \ref{DualThm}, 
we can study any plane tropical curve 
by using the dual subdivision of the corresponding Newton polytope. 

\vspace{3mm}

Next, we discuss the dimension of 
the space of tropical curves. 
For a given polytope $\Delta$, 
let 
$\mathfrak{T}(\Delta)$ 
denote the set of tropical curves 
which are defined by polynomials in two variables 
over the max-plus algebra with Newton polytope $\Delta$. 
Let $S$ be the dual subdivision of 
$T \in \mathfrak{T}(\Delta)$ and 
define the 
\textit{rank} of the tropical curve $T$ (or of $S$) 
as 
\[
\rk(T)
:=\rk(S)
:=\dim \{ T' \in \mathfrak{T}(\Delta) ; S=S' \}, 
\]
where $S'$ is the dual subdivision of $T'$. 
By \cite[Lemma 3.14]{M}, 
the set 
$\{ T' \in \mathfrak{T}(\Delta) ; S=S' \}$
is a polyhedron in 
$\mathbb{R}^M$ 
for some positive integer $M$. 
Thus, the definition of the rank is well-defined. 

Let $\Delta_1 ,\dots, \Delta_N$ be the $2$-dimensional polytopes of $S$. 
According to \cite{S}, 
we define the 
\textit{expected rank} of the tropical curve $T$ (or of $S$) 
as 
\[
 	\rkexp(T)
 	:=\rkexp(S)
 	:=\sharp V(S) -1 - \sum_{k=1}^N (\sharp V(\Delta_{k}) -3), 
\]
where 
$V(S)$ is the set of vertices of $S$ and 
$V(\Delta_k)$ is the set of vertices of $\Delta_k$. 

\begin{defi} 
A lattice subdivision of a polytope is a 
\textit{TP-subdivision} 
if the subdivision consists of only 
triangles and parallelograms. 
\end{defi}

We remark that, this definition is same as 
the definition of the 
\textit{nodal subdivision} in \cite[Subsection 3.1]{S} 
except the condition on the boundary 
$\partial \Delta$. 

For any subdivision $S$, 
we denote 
the number of $\ell$-gons and
the number of parallel $(2m)$-gons contained in $S$ 
as $N_{\ell}$ and $N'_{2m}$, respectively.  

The following statement is in \cite[Lemma 2.2]{S}. 

\begin{lem}[Shustin \cite{S}]\label{rklem}\it
The difference 
\[ d(T) := \rk(T) - \rkexp(T) \]
of a tropical curve $T$ satisfies $d(T) \ge 0$. 
Moreover, for the dual subdivision $S$ of $T$, 
the difference $d(T)$ satisfies 
\begin{itemize}
\item 
$d(T)=0$ if $S$ is a TP-subdivision and 
\item 
$0 \le 2 d(T) \le \mathcal{N}_S$ 
otherwise,  
\end{itemize}
where 
\begin{align*}
\mathcal{N}_S
 &:=\sum_{m \ge 2}((2m-3)N_{2m}-N'_{2m})+\sum_{m \ge 2}((2m-2)N_{2m+1})-1 \\
 &=\sum_{\ell \ge 3}(\ell -3) N_{\ell} - \sum_{m \ge 2}N'_{2m} -1.
\end{align*}
\end{lem}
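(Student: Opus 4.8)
The plan is to express $\rk(S)$ as the dimension of a space of piecewise-linear functions on the Newton polytope and then to recognize $d(T)$ as the dimension of a space of linear relations attached to the non-triangular cells of $S$. Once this is done, non-negativity is immediate, the TP-case is a sweeping argument, and the bound $2d(T)\le\mathcal{N}_S$ becomes a counting estimate on those relations.

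First I would reinterpret the rank. Let $\mathrm{PL}(S)$ be the space of continuous functions $N_F\to\R$ that are affine on each $2$-dimensional cell $\Delta_1,\dots,\Delta_N$ of $S$. A tropical polynomial with Newton polytope $N_F$ has $S$ as its dual subdivision precisely when its coefficient at each vertex of $S$ is the value there of some $\nu\in\mathrm{PL}(S)$ that is strictly convex across every edge of $S$ (an open condition), whereas the coefficients attached to the finitely many lattice points that are \emph{not} vertices of $S$ are only required to stay weakly below the graph of $\nu$ and may be moved without changing $T_F$ as a subset of $\R^2$. Together with \cite[Lemma 3.14]{M}, and the fact that adding a constant to a tropical polynomial leaves its curve unchanged while adding a non-constant affine function translates it, this gives $\rk(S)=\dim\mathrm{PL}(S)-1$. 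Now a function on the vertex set $V(S)$ extends to an element of $\mathrm{PL}(S)$ exactly when, on the vertices $V(\Delta_k)$ of every cell, it is the restriction of an affine function on $\R^2$; the affine functions on $V(\Delta_k)$ form a $3$-dimensional space, so their annihilator $A_k^{\perp}\subset\R^{V(\Delta_k)}$ -- the space of affine dependences among the vertices of $\Delta_k$ -- has dimension $\sharp V(\Delta_k)-3$, and
\[
\mathrm{PL}(S)=\bigl\{\,f\in\R^{V(S)}\ :\ \langle\iota_k(\xi),f\rangle=0\ \text{ for all }k\text{ and all }\xi\in A_k^{\perp}\,\bigr\},
\]
where $\iota_k\colon\R^{V(\Delta_k)}\hookrightarrow\R^{V(S)}$ is the inclusion by zero. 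Hence $\dim\mathrm{PL}(S)=\sharp V(S)-\rk\Phi$ for the map $\Phi\colon\bigoplus_k A_k^{\perp}\to\R^{V(S)}$, $(\xi_k)_k\mapsto\sum_k\iota_k(\xi_k)$. Comparing with the definition of $\rkexp$ and using $\sum_k(\sharp V(\Delta_k)-3)=\dim\bigoplus_k A_k^{\perp}=\rk\Phi+\dim\ker\Phi$ yields $d(T)=\dim\ker\Phi\ge 0$, which is the first assertion; moreover $d(T)=0$ precisely when the cell-conditions are linearly independent.

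Next, the TP case. If $S$ is a TP-subdivision the triangles contribute $A_k^{\perp}=0$, and each parallelogram $\Delta_k$ with cyclically ordered vertices $a,b,c,d$ contributes the line spanned by the alternating relation $\xi_k=e_a-e_b+e_c-e_d$ (the unique affine dependence, since $a+c=b+d$). To see $\ker\Phi=0$ I would run a sweep: fix a linear functional $\ell$ on $\R^2$ that is non-constant along every edge of $S$, normalize each $\xi_k$ so that the $\ell$-minimal vertex of $\Delta_k$ carries coefficient $+1$, and consider a hypothetical relation $\sum_k\lambda_k\xi_k=0$; choosing a parallelogram whose $\ell$-minimal vertex is as small as possible, the coefficient of that vertex -- together with the analogous comparisons at the next vertices, used to separate the parallelograms sharing it -- forces the corresponding $\lambda_k$ to vanish, and iterating gives $\lambda_k\equiv 0$. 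Hence $d(T)=0$; this is the only place the TP-hypothesis enters.

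It remains to prove $2d(T)\le\mathcal{N}_S$ when $S$ is not a TP-subdivision. Since $d(T)=\dim\ker\Phi=\sum_k(\sharp V(\Delta_k)-3)-\rk\Phi$ and $\mathcal{N}_S=\sum_k(\sharp V(\Delta_k)-3)-\sum_{m\ge2}N'_{2m}-1$, this is equivalent to the lower bound
\[
\rk\Phi\ \ge\ \tfrac{1}{2}\Bigl(\textstyle\sum_k(\sharp V(\Delta_k)-3)+\sum_{m\ge2}N'_{2m}+1\Bigr).
\]
The starting observation is that if $(\xi_k)_k\in\ker\Phi$ then the support of each $\xi_k$ lies in the set of vertices of $\Delta_k$ shared with some other cell (evaluating $\sum_k\iota_k(\xi_k)=0$ at a vertex of $\Delta_k$ in no other cell kills that coordinate of $\xi_k$), so $\xi_k$ lies in the subspace of $A_k^{\perp}$ of dependences supported on the shared vertices; iterating this until the ``shared'' family stabilizes, only cells with at least four vertices shared with other cells of the stable family can contribute. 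Counting incidences between those cells and their shared vertices -- each such vertex lying in at least two of them -- produces the factor $\tfrac{1}{2}$, while the refinements $+\sum_m N'_{2m}$ and $+1$ should come from treating the parallel polygons separately (as in the TP argument the corresponding sweep recovers one extra independent relation per such cell) together with the fact that a non-TP subdivision always contains at least one non-parallel cell with four or more vertices. Making this bookkeeping precise -- and in particular producing exactly the term $-\sum_{m\ge2}N'_{2m}-1$ -- is the step I expect to be the main obstacle; everything else is either a formal consequence of the identifications above or a direct sweeping argument.
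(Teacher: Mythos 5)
You should first note that the paper you are comparing against does not prove this statement at all: it is quoted verbatim from Shustin \cite[Lemma 2.2]{S}, so your attempt has to stand on its own as a proof of Shustin's lemma. Your opening reduction is fine and is essentially the standard (and Shustin's) point of view: identifying $\rk(S)$ with $\dim\mathrm{PL}(S)-1$, encoding each cell $\Delta_k$ by its space of affine dependences of dimension $\sharp V(\Delta_k)-3$, and obtaining $d(T)=\dim\ker\Phi\ge 0$ is correct and cleanly proves the first assertion.

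The genuine gaps are in the two remaining claims, which carry all the content of the lemma. For the TP case, your sweep does not close as described: at the $\ell$-minimal vertex $v$ of the parallelograms with nonzero coefficient you only get $\sum_{k\,:\,v\in\Delta_k}\lambda_k=0$, and several parallelograms can indeed share $v$ as their $\ell$-minimal vertex (two thin parallelograms with a common top vertex, say); the ``analogous comparisons at the next vertices'' then involve the unknown coefficients of parallelograms whose own minimal vertex lies elsewhere (possibly $\ell$-below those neighbours), so the triangular structure you need for the induction is not there, and no separation mechanism is actually given. For the inequality $2d(T)\le\mathcal{N}_S$, you correctly reduce it to the lower bound $\rk\Phi\ \ge\ \tfrac12\bigl(\sum_k(\sharp V(\Delta_k)-3)+\sum_{m\ge2}N'_{2m}+1\bigr)$, but what follows is only a heuristic: restricting dependences to shared vertices and counting incidences ``each shared vertex lies in at least two cells'' does not by itself produce a lower bound on the rank of $\Phi$ (linear independence is not an incidence count), and neither the factor $\tfrac12$ nor the corrections $-\sum_{m\ge2}N'_{2m}$ and $-1$ are actually derived -- as you yourself flag. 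So as it stands the proposal proves $d(T)\ge0$ but neither the TP-subdivision equality nor the bound by $\mathcal{N}_S$; to repair it you would need either a genuine shelling/induction argument on the cells (adding cells one at a time along a sweep line and bounding the increase of the corank contributed by each cell, with the parallel polygons and the global $-1$ handled explicitly) or simply to defer, as the paper does, to Shustin's proof.
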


\subsection{Some remarks on 1-tacnodal curves}

In this paper, 
a curve on a projective surface is called a \textit{1-tacnodal curve} 
if the curve has exactly one singular point 
at a smooth point of the surface 
whose topological type is $A_3$. 
The term ``tacnode" means $A_3$-singularity.
In this subsection we prepare some lemmata 
related to 1-tacnodal curves.

For a polynomial $f$ and $p \in \C^2$, 
we use the notations 
$f_x(p)=\frac{\partial f}{\partial x}(p), 
f_y(p)=\frac{\partial f}{\partial y}(p)$ 
and so on. 
We set 
$\mathrm{Hess}(f)(p)=f_{xx}(p)f_{yy}(p)-f_{xy}(p)^2$ and 
\begin{align*}
K(f)(p):=-f_{xy}(p)^3f_{xxx}(p)+
&3f_{xx}(p) f_{xy}(p)^2 f_{xxy}(p)\\
&-3f_{xx}(p)^2 f_{xy}(p) f_{xyy}(p)
+f_{xx}(p)^3f_{yyy}(p). 
\end{align*}

\begin{lem}\label{tacnode}\it
Suppose that a polynomial 
$f \in \C[x,y]$ 
satisfies 
$f_{xx}(p) \neq 0$.  
Then the curve 
$\{f=0\} \subset \C^2$
has a tacnode at $p$ if and only if 
$f$ satisfies 
\begin{itemize}
\item[(1)] $\displaystyle f(p)=f_x(p)=f_y(p)=0$, 
\item[(2)] $\displaystyle \mathrm{Hess}(f)(p)=0$, 
\item[(3)] $\displaystyle K(f)(p)=0$, 
\item[(4)] $\displaystyle a_{12}(p)^2-4f_{xx}(p)a_{04}(p) \neq 0$, 
\end{itemize}
where 
\begin{align*}
a_{12}(p):=&
f_{xy}(p)^2f_{xxx}(p)
-2f_{xx}(p)f_{xy}(p)f_{xxy}(p)
+f_{xx}(p)^2f_{xyy}(p), 
\\
a_{04}(p):=&
f_{xy}(p)^4f_{xxxx}(p)
-4f_{xx}(p)f_{xy}(p)^3f_{xxxy}(p)\\
&+6f_{xx}(p)^2f_{xy}(p)^2f_{xxyy}(p)
-4f_{xx}(p)^3f_{xy}(p)f_{xyyy}(p)
+f_{xx}(p)^4f_{yyyy}(p).
\end{align*}

\end{lem}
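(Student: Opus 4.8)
The plan is to reduce the problem to a normal-form computation. Since $f_{xx}(p)\neq 0$, after a translation we may assume $p=0$, and the implicit function theorem lets us solve $f(x,y)=0$ locally for $x$ as a function of $y$; equivalently, by the Weierstrass preparation theorem, near $0$ the equation $f=0$ is equivalent to
\[
x = \varphi(y) = c_2 y^2 + c_3 y^3 + c_4 y^4 + \cdots
\]
after completing the square in $x$ (using $f_{xx}(0)\neq 0$), where the constant and linear terms in $y$ vanish precisely when conditions~(1) hold. The topological type of the singularity at $0$ is then read off from the first nonvanishing coefficient among $c_2, c_3, c_4$: we get an $A_1$ (node) if $c_2\neq 0$, an $A_2$ (cusp) if $c_2=0$ and $c_3\neq 0$, and an $A_3$ (tacnode) if $c_2=c_3=0$ and $c_4\neq 0$. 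So the whole lemma amounts to expressing $c_2, c_3, c_4$ in terms of the partial derivatives of $f$ and checking that $c_2=0 \Leftrightarrow \mathrm{Hess}(f)(0)=0$, that (given $c_2=0$) $c_3=0 \Leftrightarrow K(f)(0)=0$, and that (given $c_2=c_3=0$) $c_4 \neq 0 \Leftrightarrow a_{12}(0)^2 - 4 f_{xx}(0) a_{04}(0) \neq 0$.

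First I would set up the change of coordinates explicitly. Write $A = f_{xx}(0)$, $B = f_{xy}(0)$, $C = f_{yy}(0)$, and more generally track the Taylor coefficients of $f$ up to order four. Completing the square in $x$ means substituting $x = \tilde x - (B/A)\,y + (\text{higher order in } y)$ chosen so that the coefficient of $\tilde x\, y^k$ vanishes for $k=1,2,3$; this is the same as the substitution that appears implicitly in the definitions of $a_{12}$ and $a_{04}$, which is why those polynomials have exactly the homogeneous-in-$A$ shape one sees in the statement. After this substitution $f$ becomes $\tfrac{A}{2}\tilde x^2 + (\text{terms with no } \tilde x) + (\text{terms with }\tilde x^2\text{ or higher times positive powers of } y)$, and solving $f = 0$ for $\tilde x$ near $0$ gives $\tilde x^2 = (\text{quadratic-and-higher in } y)$, hence $\tilde x = \pm\sqrt{\cdots}$; the branch structure shows the singularity is $A_{k}$ with $k$ determined by the order of vanishing in $y$ of the right-hand side. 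The coefficient of $y^2$ on the right is (a nonzero multiple of) $\mathrm{Hess}(f)(0)$, the coefficient of $y^3$ is (a nonzero multiple of) $K(f)(0)$ once the $y^2$-coefficient vanishes, and the coefficient of $y^4$ is $\bigl(a_{12}(0)^2 - 4A\,a_{04}(0)\bigr)$ up to a nonzero constant once both lower coefficients vanish.

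Concretely, the key steps in order are: (i) normalize $p = 0$ and record the order-$\le 4$ Taylor data of $f$; (ii) perform the square-completion in $x$, obtaining new coordinates $(\tilde x, y)$ and an equation $\tfrac{A}{2}\tilde x^2 = g(y) + \tilde x \cdot h(\tilde x, y)\cdot(\text{higher order})$ with $g$ vanishing to order $\ge 2$; (iii) invoke the standard fact that such a curve has an $A_k$ singularity where $k+1$ is the order of vanishing of $g$ at $0$ (this is where I would cite or recall the ADE classification of plane-curve singularities, e.g. via \cite{G} or the Milnor/Orlik description); (iv) match $g$'s Taylor coefficients of orders $2,3,4$ against $\mathrm{Hess}(f)(0)$, $K(f)(0)$, and $a_{12}(0)^2 - 4A\,a_{04}(0)$ respectively, which is a direct — if lengthy — polynomial identity in the partial derivatives; (v) conclude. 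The main obstacle is step (iv), the bookkeeping: one must carry out the square-completion and series inversion carefully enough to see that the order-$3$ coefficient of $g$ is a nonzero multiple of $K(f)(0)$ \emph{only after} imposing $\mathrm{Hess}(f)(0) = 0$, and likewise that the order-$4$ coefficient becomes $a_{12}(0)^2 - 4 A\,a_{04}(0)$ up to a unit \emph{only after} imposing both $\mathrm{Hess}(f)(0) = K(f)(0) = 0$; getting the intermediate cancellations right, and confirming the precise nonzero constants, is the delicate part, though it is entirely mechanical and can in principle be checked by a computer algebra system.
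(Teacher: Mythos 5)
Your overall strategy is essentially the paper's: use $f_{xx}(p)\neq 0$ to bring $f$ to a normal form in which the $A_k$ type can be read off by a standard criterion, then translate that criterion back into the partial derivatives of $f$. The paper does this with the linear substitution $u=f_{xx}(p)x+f_{xy}(p)y$, $v=y$ followed by the Newton-diagram characterization of $A_3$; you propose the nonlinear square-completion (splitting lemma). Two points, however, need repair. First, your opening reduction is misstated: at a singular point $f_x(p)=0$, so the implicit function theorem does not let you solve $f=0$ for $x=\varphi(y)$, and the curve is not a graph (a graph would be smooth); the coefficients $c_2,c_3,c_4$ of such a $\varphi$ do not exist. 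What your second paragraph actually needs is: solve $f_x(\xi(y),y)=0$ for $\xi(y)$ (this is where the implicit function theorem and $f_{xx}(p)\neq 0$ enter), set $g(y):=f(\xi(y),y)$, observe that $f$ is equivalent to $\tilde x^2+g(y)$, and use that the singularity is $A_k$ exactly when $\mathrm{ord}_y\,g=k+1$. As written, your first and second paragraphs describe two different reductions, and the first one is untenable.

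Second, and more seriously, the entire content of the lemma is the coefficient identification you defer in step (iv), and the identification you assert is not what the computation returns. Carrying out the elimination one finds, unconditionally, $g''(0)=\mathrm{Hess}(f)(p)/f_{xx}(p)$ and $g'''(0)=K(f)(p)/f_{xx}(p)^3$ (no prior vanishing hypotheses are needed for these), but
\[
g''''(0)=\frac{f_{xx}(p)\,a_{04}(p)-3\,a_{12}(p)^2}{f_{xx}(p)^5},
\]
which is not a nonzero constant multiple of $a_{12}(p)^2-4f_{xx}(p)a_{04}(p)$. A test case makes the discrepancy concrete: for $f=x^2+xy^2+cy^4$ one has $f=(x+\tfrac12 y^2)^2+(c-\tfrac14)y^4$, so the origin is a tacnode exactly when $c\neq \tfrac14$, whereas $a_{12}^2-4f_{xx}a_{04}=64-3072c$ vanishes at $c=\tfrac1{48}$. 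So the ``mechanical check'' you postpone would not confirm condition (4) with the stated constants; what your route yields is the nondegeneracy $3a_{12}^2-f_{xx}a_{04}\neq 0$, i.e.\ the discriminant $c_{12}^2-4c_{20}c_{04}\neq 0$ of the weight-$4$ part computed with monomial Taylor coefficients $c_{ij}$ rather than with derivative values. This factorial bookkeeping is precisely the delicate point: the paper's own proof quotes the Newton-diagram criterion in the form $\hat f_{uvv}^2-4\hat f_{uu}\hat f_{vvvv}\neq 0$ with derivative values, which runs into the same issue. It is therefore exactly the step that cannot be left as an unverified assertion — the proof of the equivalence involving (4) stands or falls on it.
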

\begin{proof}
For simplicity, we assume that $p$ is the origin $(0,0)$ of $\C^2$.
First, if the origin is a singular point then we can represent $f$ as 
\[
f=Ax^2+ Bxy +Cy^2 +(\text{higher terms}), 
\]
where $(A,B,C)=(f_{xx}(0,0)/2,f_{xy}(0,0),f_{yy}(0,0)/2)$. 
If $\mathrm{Hess}(f)(0,0) \neq 0$, 
then the origin is an $A_1$-singularity of $\{ f=0 \}$. 
Therefore $\mathrm{Hess}(f)(0,0)=0$ 
for the origin to be an $A_3$-singularity. 
Then we can rewrite $f$ as
\[
f=\frac{1}{4A}(2Ax+By)^2 + (\text{higher terms}). 
\]
The tangent line of $\{ f=0 \}$ at the origin is defined by 
\[ f_{xx}(0,0)x+f_{xy}(0,0)y=0. \]
Now we define new coordinates $(u,v)$ as 
\[
\begin{pmatrix}
u\\
v\\
\end{pmatrix}
=
\begin{pmatrix}
 f_{xx}(0,0) & f_{xy}(0,0)\\
 0           & 1          \\
\end{pmatrix}
\begin{pmatrix}
x\\
y\\
\end{pmatrix} 
\]
and set 
\[ 
\hat{f}(u,v):=f(x(u,v),y(u,v)). 
\]
Note that 
the condition 
$f(0,0)=f_x(0,0)=f_y(0,0)=\mathrm{Hess}(f)(0,0)=0$ 
is equivalent to 
$\hat{f}(0,0)=\hat{f}_u(0,0)=\hat{f}_v(0,0)=\mathrm{Hess}(\hat{f})(0,0)=0$.

By direct computation, we obtain the equalities: 
\begin{equation}\label{newco}\tag{*}
\begin{split}
&\hat{f}_{uu}(0,0)=\frac{1}{f_{xx}(0,0)}, \\
&\hat{f}_{uv}(0,0)=0, \\
&\hat{f}_{vv}(0,0)=\frac{1}{f_{xx}(0,0)}\mathrm{Hess}(f)(0,0), \\
&\hat{f}_{uvv}(0,0)=\frac{1}{f_{xx}(0,0)^3}a_{12}(0,0), \\
&\hat{f}_{vvv}(0,0)= \frac{1}{f_{xx}(0,0)^3}K(f)(0,0), \\
&\hat{f}_{vvvv}(0,0)=\frac{1}{f_{xx}(0,0)^4}a_{04}(0,0).
\end{split}
\end{equation}

From the properties of the Newton diagram of a plane curve singularity 
\cite{Kou}, 
the condition that 
the singularity at the origin is $A_3$ can be rewritten as 
\[ 
\hat{f}_{uv} (0,0)=
\hat{f}_{vv} (0,0)=
\hat{f}_{vvv}(0,0)=0, \;\;
\hat{f}_{uu}(0,0) \neq 0
\]
and 
\[
\hat{f}_{uvv}(0,0)^2-4\hat{f}_{uu}(0,0)\hat{f}_{vvvv}(0,0) \neq 0
\]
on the new coordinate system. 
By~\eqref{newco}, 
these conditions coincide with the conditions in the assertion.
\end{proof}

For $\mu=1,3$, 
let $U(\Delta,A_{\mu})$ denote a locally closed subvariety 
in the complete linear system $|D(\Delta)|$ of $D(\Delta)$ 
which parametrizes the set of curves having exactly one singular point 
whose topological type is $A_{\mu}$. 
Let $V(\Delta,A_{\mu})$ be the closure of 
$U(\Delta,A_{\mu})$ in $|D({\Delta})|$.

\begin{cor}\it \label{dimension}
If $V(\Delta,A_3)$ is non-empty 
then $\dim V(\Delta,A_3) \ge \sharp \Delta_{\Z}-4$. 
\end{cor}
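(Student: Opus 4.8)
The plan is to run the standard ``every component of a Severi-type locus has at least its expected dimension'' argument, using Lemma~\ref{tacnode} to write down the defining equations. The heuristic count is this: imposing an $A_3$ point at a \emph{fixed} smooth point of $X(\Delta)$ amounts to the five scalar conditions (1)--(3) of Lemma~\ref{tacnode}, while letting that point move recovers two parameters, so the expected codimension of $V(\Delta,A_3)$ in $|D(\Delta)|$ is $5-2=3$; since $\dim|D(\Delta)|=\sharp\Delta_{\Z}-1$ (the monomials $z^iw^j$ with $(i,j)\in\Delta_{\Z}$ form a basis of $H^0(X(\Delta),D(\Delta))$), this predicts $\dim V(\Delta,A_3)\ge\sharp\Delta_{\Z}-4$, and the content of the proof is that such a lower bound always holds component by component.

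Concretely, assuming $V(\Delta,A_3)\neq\emptyset$ I would fix a curve $C_0\in U(\Delta,A_3)$ with its unique singular point $p_0$, a smooth point of $X(\Delta)$ of topological type $A_3$, together with an affine neighborhood $\mathcal{U}$ of $p_0$ and local coordinates $(x,y)$ centered at $p_0$ chosen so that the local equation $f_0$ of $C_0$ satisfies $(f_0)_{xx}(p_0)\neq0$; this is possible because the tangent cone of an $A_3$ point is a double line, which we may place at $\{x=0\}$. After trivializing $D(\Delta)$ over $\mathcal{U}$, the expressions $f(p)$, $f_x(p)$, $f_y(p)$, $\mathrm{Hess}(f)(p)$, $K(f)(p)$ are regular functions $g_1,\dots,g_5$ on the irreducible variety $|D(\Delta)|\times\mathcal{U}$, which has dimension $\sharp\Delta_{\Z}+1$. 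By conditions (1)--(3) of Lemma~\ref{tacnode} we have $(C_0,p_0)\in Z:=\{g_1=\dots=g_5=0\}$, so Krull's principal ideal theorem gives, for the component $Z_0$ of $Z$ through $(C_0,p_0)$, the bound $\dim Z_0\ge(\sharp\Delta_{\Z}+1)-5=\sharp\Delta_{\Z}-4$.

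The last step is to carry this bound over to $|D(\Delta)|$ along the first projection $\pi$. Near $(C_0,p_0)$, a curve $C$ parametrized by $Z_0$ is reduced (reducedness being open), has multiplicity exactly $2$ at the associated point $p$ since $(f_0)_{xx}(p_0)\neq0$ forces $f_{xx}(p)\neq0$, and therefore carries an $A_{\ge3}$ singularity at $p$; constancy of the arithmetic genus in the linear system $|D(\Delta)|$ together with $\delta(A_3)=2$ forces $C$ to have no further singular point, so $p$ is determined by $C$ and $\pi|_{Z_0}$ is generically injective. Since condition (4) of Lemma~\ref{tacnode} is open and holds at $(C_0,p_0)$, a dense open subset of $Z_0$ maps into $U(\Delta,A_3)$, whence $\overline{\pi(Z_0)}\subseteq\overline{U(\Delta,A_3)}=V(\Delta,A_3)$. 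Putting everything together, $\dim V(\Delta,A_3)\ge\dim\overline{\pi(Z_0)}=\dim Z_0\ge\sharp\Delta_{\Z}-4$.

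The Krull-dimension count in the middle paragraph is routine; the main thing to be careful about is the analysis in the last paragraph, i.e. checking that the locus $Z$ cut out by $g_1,\dots,g_5$ — which a priori also contains curves with worse or additional singularities and pairs $(C,p)$ with $p$ not determined by $C$ — nevertheless meets $U(\Delta,A_3)$ densely in the component $Z_0$. This comes down to openness of the relevant conditions and the semicontinuity of $\delta$, and is manageable, but it is the step where attention is required.
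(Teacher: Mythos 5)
Your overall route is the same as the paper's: both arguments bound the dimension of the incidence locus in $|D(\Delta)|\times X(\Delta)$ cut out by the five equations of Lemma~\ref{tacnode} and then project to $|D(\Delta)|$. The paper organizes the count by quoting Harris--Morrison for the locus cut out by the first three equations ($\Sigma(\Delta,A_1)$, of dimension $\sharp\Delta_{\Z}-2$) and then imposing $\mathrm{Hess}(f)=K(f)=0$, while you apply Krull's principal ideal theorem to all five equations at once; this is the same five-condition count, not a different method. The only substantive difference is that you try to justify explicitly why the component $Z_0$ generically parametrizes curves belonging to $U(\Delta,A_3)$, a point the paper passes over.

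However, the justification you give for that key sub-step is wrong as stated. ``Constancy of the arithmetic genus in $|D(\Delta)|$ together with $\delta(A_3)=2$'' does not force a curve with a tacnode to have no further singular point: the arithmetic genus of curves in the system equals $\sharp\mathrm{Int}\Delta_{\Z}$, which is in general much larger than $2$, and the system typically contains curves with a tacnode plus extra nodes (same arithmetic genus, smaller geometric genus), so nothing is ``forced''. What actually excludes extra singularities for $(C,p)\in Z_0$ sufficiently close to $(C_0,p_0)$ is upper semicontinuity of the total $\delta$-invariant in a family degenerating to $C_0$: since $C_0$ has $p_0$ as its only singular point, any additional singular points of nearby curves would have to accumulate at $p_0$, and then $\delta(C_0,p_0)=2$ would have to majorize $\delta(A_3)+\delta(\text{extra point})\ge 3$, a contradiction. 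You do name ``semicontinuity of $\delta$'' in your closing paragraph, so the repair is the tool you already have in mind, but the argument as written does not establish the step, and without it you only know that $\pi(Z_0)$ meets the locus of curves with an $A_{\ge 3}$ point somewhere, not that it lies in $V(\Delta,A_3)$. Replace the genus argument by this local $\delta$-semicontinuity argument at $p_0$ and the proof is complete; note also that generic injectivity of $\pi|_{Z_0}$ is not needed, since the fibres of $\pi$ over reduced curves are finite, which already gives $\dim\overline{\pi(Z_0)}=\dim Z_0$.
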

\begin{proof}
For $\mu =1, 3$, 
we set 
\[
\Sigma(\Delta,A_{\mu})
:=\{(C,p) ; \text{$p$ is a singular point of $C$} \}
\subset U(\Delta, A_{\mu}) \times X(\Delta)
\subset |D(\Delta)| \times X(\Delta). 
\]
For a curve $C \in V(\Delta, A_{\mu})$, 
we choose a local coordinate system $(x,y)$ of $X(\Delta)$ 
around the singular point $p=(x_0,y_0) \in C$. 
Let $f$ be a defining polynomial of $C$. 
By Lemma~\ref{tacnode}, 
$\Sigma(\Delta, A_3)$ is locally defined by 
\[\label{tacnodeeq}\tag{**}
f(x_0,y_0) 
= f_x(x_0,y_0) 
= f_y(x_0,y_0) 
= \mathrm{Hess}(f)(x_0,y_0) 
= K(f)(x_0,y_0) =0. 
\]
Note that,
by \cite[Theorem (1.49)]{HM}, 
the dimension of the Severi variety $V(\Delta, A_1)$ satisfies 
\[
\dim V(\Delta, A_1) 
= \dim \Sigma(\Delta, A_1) 
= \sharp \Delta_{\Z}-1-1
\]
and $\Sigma(\Delta, A_1)$ is defined by the first 
three equations of (\ref{tacnodeeq}).
Therefore, we obtain 
\[
\dim V(\Delta, A_3) 
\ge \dim \Sigma (\Delta, A_3)
\ge \sharp \Delta_{\Z} -1-3 
= \sharp \Delta_{\Z}-4. 
\]
\end{proof}

\subsection{Tropicalization of curves}\label{troplicalization}

We briefly introduce the \textit{tropicalization} of a curve 
and its refinement 
(see \cite[Section 3]{S} for more details). 

Let $F \in K[z,w]$ be a reduced polynomial 
which defines a curve $C \subset X(N_F)$.  
Set $\Delta=N_F$ and let $T_F$ be the tropical amoeba defined by $F$ 
introduced in Section~2.1 and $S_F$ be the dual subdivision of $T_F$. 
We consider the 3-dimensional unbounded polyhedron 
\[
\check{\Delta}_F
:=\mathrm{Conv}
\{ (i,j,t)\in \mathbb{R}^2 \times \mathbb{R}
;t \ge \nu_F(i,j)  \}
\subset \mathbb{R}^3. 
\]
We remark that a compact facet 
$\check{\Delta}_i$ 
of $\check{\Delta}_F$ 
corresponds to a 2-dimensional polytope 
$\Delta_i$ 
in $S_F$ 
by the projection 
$\check{\Delta}_F \subset \R^2 \times \R \to \R^2$. 

We then obtain 
a toric flat morphism $X(\check{\Delta}_F) = \mathfrak{X} \to \C$ 
from the toric 3-fold associated with $\check{\Delta}_F$ to the 
complex line, which is called a \textit{toric degeneration}.  
A generic fiber $\mathfrak{X}_t$ 
is isomorphic to $X(\Delta)$, 
and its central fiber $\mathfrak{X}_0$ 
is isomorphic to 
$\bigcup_{i=1,\dots,N}X(\Delta_i)$
(see \cite[Section 3]{NS} for more details). 
Let $D \subset \C$ be a small disk centered at the origin.
We regard the indeterminate $t$ of $K$ as the variable in 
$D^*:=D \setminus \{ 0 \}$. 
Then we can get an analytic function $F(t;z,w)$ in three variables.
From this analytic function, 
we obtain an equisingular family on the toric surface $X(\Delta)$ 
\[
\{ C^{(t)}:= \mathrm{Closure}(\{ F(t;z,w)=0 \}) \}_{t \in D^*}. 
\]
The limit $ C^{(0)} $ of this family is constructed as follows:
For each $i=1,\dots ,N$, 
a complex polynomial $f_i \in \C[z,w]$ 
whose Newton polytope is $\Delta_i \in S_F$ 
is induced from the face function of $F$ on $\check{\Delta}_i$ 
by the transformation induced by the projection from $\check{\Delta}_i$ 
to $\Delta_i$.
The union of these curves is the limit $C^{(0)}$, 
which is a curve on the central fiber $\mathfrak{X}_0$ 
of the toric degeneration. 
The limit $ C^{(0)} $ is called a \textit{tropicalization} of $C$. 

For each singular point $z$ of $C$, 
there exists a continuous family of singular points 
$\{z_t\}$ for $t \in D^*$, 
where $z_t \in C^{(t)}$, 
and this family defines a section 
$s : D^* \to X(\check{\Delta}_F)$.
If the limit $s(0)=\lim_{t \to 0}s(t)$ 
does not belong to the intersection lines 
$\bigcup_{i \neq j}X(\Delta_i \cap \Delta_j)$ 
and bears just one singular point of $C^{(t)}$, 
the point $s(0)$ is called a \textit{regular singular point}. 
Otherwise it is called an \textit{irregular singular point}. 
Note that if $s(0)$ is a regular singular point then 
it is topologically equivalent to the original singularity.

If the singular point $s(0)$ is irregular, 
additional information 
can be obtained by the \textit{refinement} of the tropicalization, 
see Figure~1.
In the rest of this section, 
we explain this method briefly. 
See \cite[Subsection 3.5]{S}
for the details of the refinement.

Hereafter, we assume that $F$ defines a 1-tacnodal curve in $X(\Delta)$.
Let $\Delta_1 \in S_F$ and $\Delta_2 \in S_F$ 
be polytopes which have a common edge 
$\sigma$ of length $m \ge 2$ 
and we observe the case where 
an irregular singularity degenerates into 
the subvariety 
$X(\sigma)$ of $\mathfrak{X}_0$. 
For each $i=1,2$, 
let $f_i$ be a polynomial whose Newton polytope is $\Delta_i$ 
such that the union of curves 
$C_1 \cap C_2 \subset C^{(0)}$ 
defined by $f_1=f_2=0$ intersects 
$X(\sigma)$ at $z \in \mathfrak{X}_0$. 
In this paper, by later discussion, we can assume that, 
for each $i=1,2$, 
the polynomial $f_i$ has 
an isolated singularity at $z \in X(\sigma)$ 
and their Newton boundary intersects 
the $x$- and $y$-axes at $(m_i,0)$ and $(0,m)$, respectively, 
where the $y$-axis corresponds to $X(\sigma)$.

Find an automorphism 
$M_{\sigma} \in \mathrm{Aff}(\Z^2)$
such that 
$M_{\sigma}(\Delta)$ is contained in the right half-plane of $\R^2$ 
and 
$M_{\sigma}(\sigma)=:\sigma'$ is a horizontal segment, 
see Figure~1. 
The automorphism $M_{\sigma}$ 
induces a transformation $(x,y) \mapsto (x',y')$, by which 
we obtain a new polynomial $F'(x',y')$ from $F$. 
We can assume that $F' \in K[x',y']$ 
by multiplying a monomial.
We remark that the point $z$ corresponds to 
a root $\xi \neq 0$ of the truncation polynomial 
$F'^{\sigma'}(x',y')$ of $F'$ on $\sigma'$. 
Here 
the truncation polynomial $F^\sigma$ of a polynomial $F$ 
on a facet $\sigma$ of $N_F$ is the sum of the terms of $F$ 
corresponding to the lattice points on $\sigma$. 

Then we choose an element 
$\tau \in K$ such that 
the coefficient of $\tilde{x}^{m-1}$ in 
$\tilde{F}(\tilde{x},\tilde{y})=F'(\tilde{x} + \xi + \tau , \tilde{y})$ 
is zero. 
Moreover, the dual subdivision of the tropical amoeba 
defined by $\tilde{F}$ contains a subdivision of the triangle 
$\Delta_z:=\mathrm{Conv}\{ (m,0), (0,m_1), (0,-m_2) \}$. 
In this paper, we call the polytope $\Delta_{z}$ 
the \textit{exceptional polytope} for the irregular singularity 
$z \in \mathfrak{X}_0$. 
We remark that, the exceptional polytope is 
the union of the complements of 
the Newton diagrams of the polynomials 
$f_1$ and $f_2$ at $z \in X(\sigma)$ 
in the first quadrant of $\R^2$. 
Making the exceptional polytope $\Delta_z$ by the translation 
is an operation similar to a blowing-up of 
the $3$-fold $\mathfrak{X}$. 
We can restore the topological type of 
the irregular singularity $z$ in $X(\Delta_z)$ by this operation.

\begin{figure}[htbp]\label{refi}
\includegraphics[scale=1]{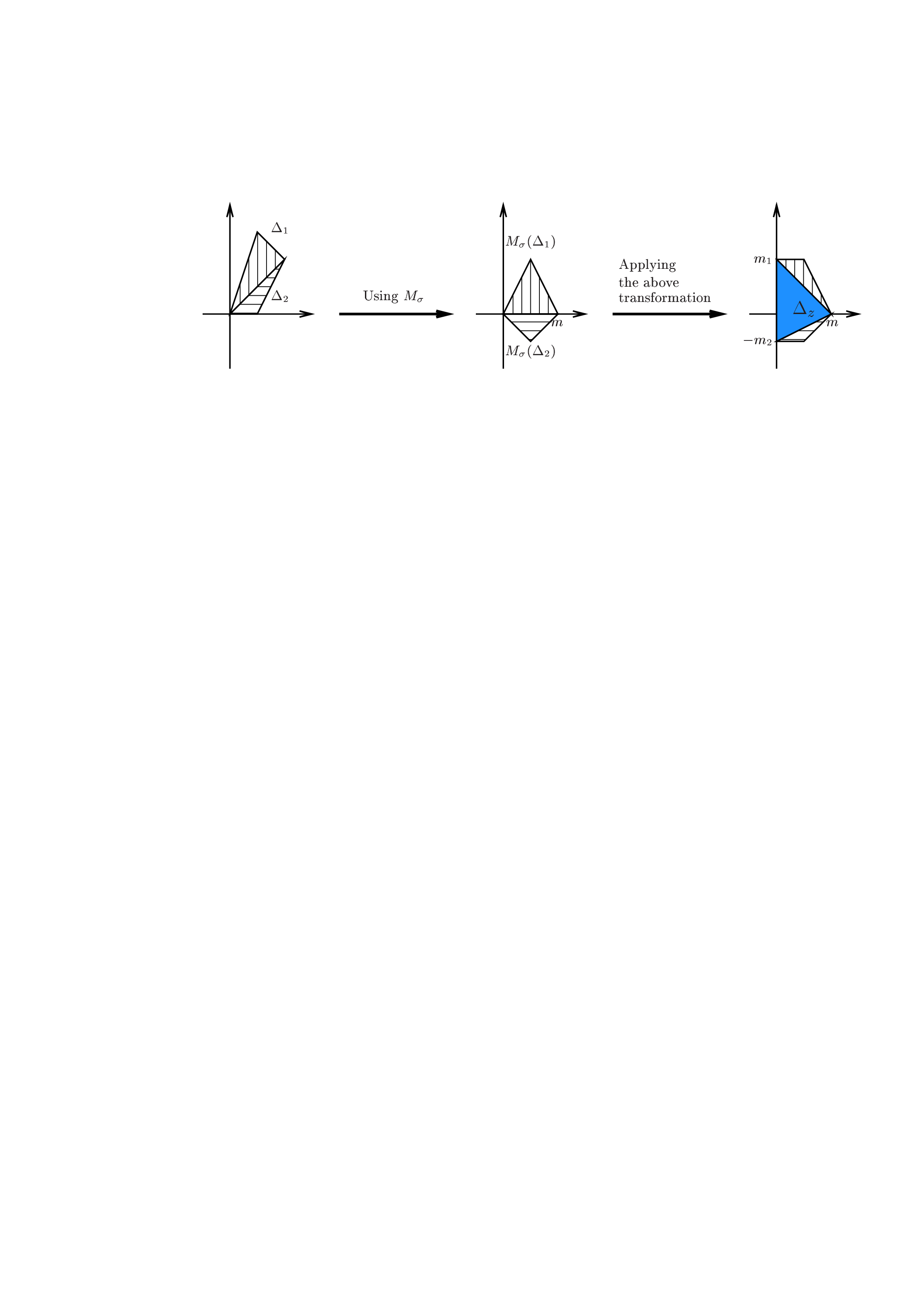}
\caption{
A refinement of a tropicalization
} 
\end{figure}


\begin{defi}\label{defpat}
For each $i=1,2$, 
let $f_i$ be a polynomial which defines $C_i$ such that 
$f_1^{\sigma} = f_2^{\sigma}$, 
and $\phi_i$ denote  
the composition of $f_i$ 
and the translation which maps $z$ to the origin of $\C^2$. 
Set 
\[ 
\Hat{\sigma}_i:=\Delta_{z} \cap N_{\phi_i}
\subset \Delta_{z}, 
\] 
where $N_{\phi_i}$ is the Newton polytope of $\phi_i$. 
We assume that $\hat{\sigma}_i$ is an edge of $\Delta_z$. 
We call a polynomial $\phi$ 
whose Newton polytope is $\Delta_{z}$
and that satisfies 
\begin{itemize}
\item[(a)] 
the coefficient of $x^{m-1}$ is zero, and 
\item[(b)] 
the truncation polynomial $\phi^{\Hat{\sigma}_i}$ 
is equal to $\phi_i$ for each edge $\hat{\sigma}_i$ of $\Delta_z$.
\end{itemize}
a \textit{deformation pattern compatible with given data} 
$(f_1,f_2,z)$. 
\end{defi}

We remark that, by the same reason as in 
\cite[Subsection 3.5]{S}, except case (E), 
if the curve defined by $F$ has only one singular point which is 
an irregular singularity and 
there does not exist a deformation pattern 
compatible with the irregular singularity 
which defines a $1$-tacnodal curve, 
then $F$ does not define a $1$-tacnodal curve. 
We will discuss what happen in case (E) in Subsection~3.4.

\section{Tropical 1-tacnodal curves}

\subsection{Definition of tropical $1$-tacnodal curves}
In this subsection, we define a tropical 1-tacnodal curve. 
We can think of it as a tropical version of a $1$-tacnodal curve, 
which is the main theorem (Theorem~\ref{thm1}) in this paper.

Set 
\begin{align*}
&\Delta_{\I}:=\mathrm{Conv}\{ (0,7),(1,0),(2,0) \},\;
\Delta_{\II}:=\mathrm{Conv}\{ (0,7),(2,0),(3,0) \},\\
&\Delta_{\III}:=\mathrm{Conv}\{ (0,0),(2,0),(1,3) \},\;
\Delta_{\IV}:=\mathrm{Conv}\{ (0,0),(2,0),(1,2) \}\\
&\Delta_{\V}:=\mathrm{Conv}\{(0,0), (4,0),(0,1) \},\;
\Delta_{\VI}:=\mathrm{Conv}\{ (1,0),(2,0),(0,3),(1,3) \},\\
&\Delta_{\VII}:=\mathrm{Conv}\{ (0,0),(1,0),(2,1),(0,1),(1,2) \},\\
&\Delta_{\VIII}:=\mathrm{Conv}\{(0,0),(1,0),(0,1),(3,3) \},\;
\Delta_{\IX}:=\mathrm{Conv}\{(0,0),(1,0),(0,1),(4,2) \}, \\
&\Delta_{\E}:=\mathrm{Conv}\{(0,0),(2,0),(0,1),(1,2)\}, 
\end{align*}
see Figure 2.

We say that a polytope $P \subset \R^2$ is 
$\mathrm{Aff}(\Z^2)$-equivalent to
(or simply, equivalent to) $P'$ 
if there exists an affine isomorphism 
$A \in \mathrm{Aff}(\Z^2)$ 
such that $A(P)=P'$, and denote it
as $P \simeq P'$.

\begin{defi}\label{tropA3}
A tropical curve $T$ is said to be 
\textit{tropical $1$-tacnodal} 
if the dual subdivision $S$ of $T$ 
contains one of the following polytopes or unions of polytopes:
\begin{itemize}
\item[(I)]
a triangle equivalent to 
$\Delta_{\I}$,

\item[(II)]
a triangle equivalent to 
$\Delta_{\II}$,

\item[(III)]
the union of a triangle equivalent to $\Delta_{\III}$ 
and a triangle with edges of lattice length $1$, $1$ and $2$ 
and without interior lattice point 
glued in such a way that 
they share the edge of lattice length $2$, 

\item[(IV)]
the union of two triangles equivalent to $\Delta_{\IV}$ 
which share the edge of lattice length $2$,

\item[(V)]
the union of two triangles equivalent to $\Delta_{\V}$ 
which share the edge of lattice length $4$,

\item[(VI)]
a parallelogram equivalent to 
$\Delta_{\VI}$,

\item[(VII)]
a pentagon equivalent to 
$\Delta_{\VII}$,

\item[(VIII)]
a quadrangle equivalent to 
$\Delta_{\VIII}$, 

\item[(IX)]
a quadrangle equivalent to 
$\Delta_{\IX}$, 

\item[(E)]
the union of a quadrangle equivalent to 
$\Delta_{\E}$ and a triangle with edges of lattice length $1$, $1$ and $2$ 
and without interior lattice point 
which share the edge of lattice length $2$, 


\end{itemize}
and the rest of $S$ consists of 
triangles of area 1/2.
\end{defi}

\begin{figure}[htbp]
\includegraphics[scale=0.6]{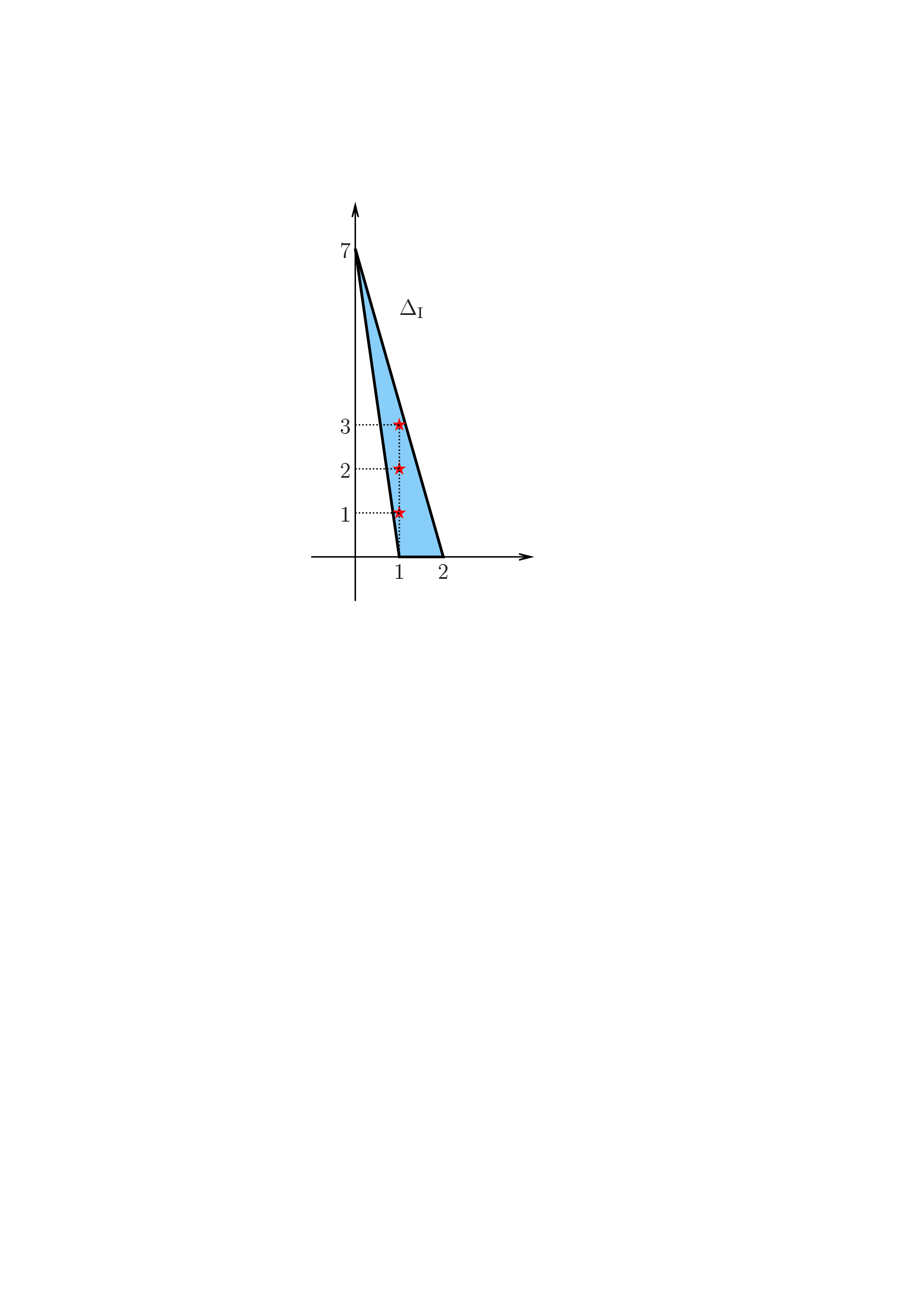}
\includegraphics[scale=0.6]{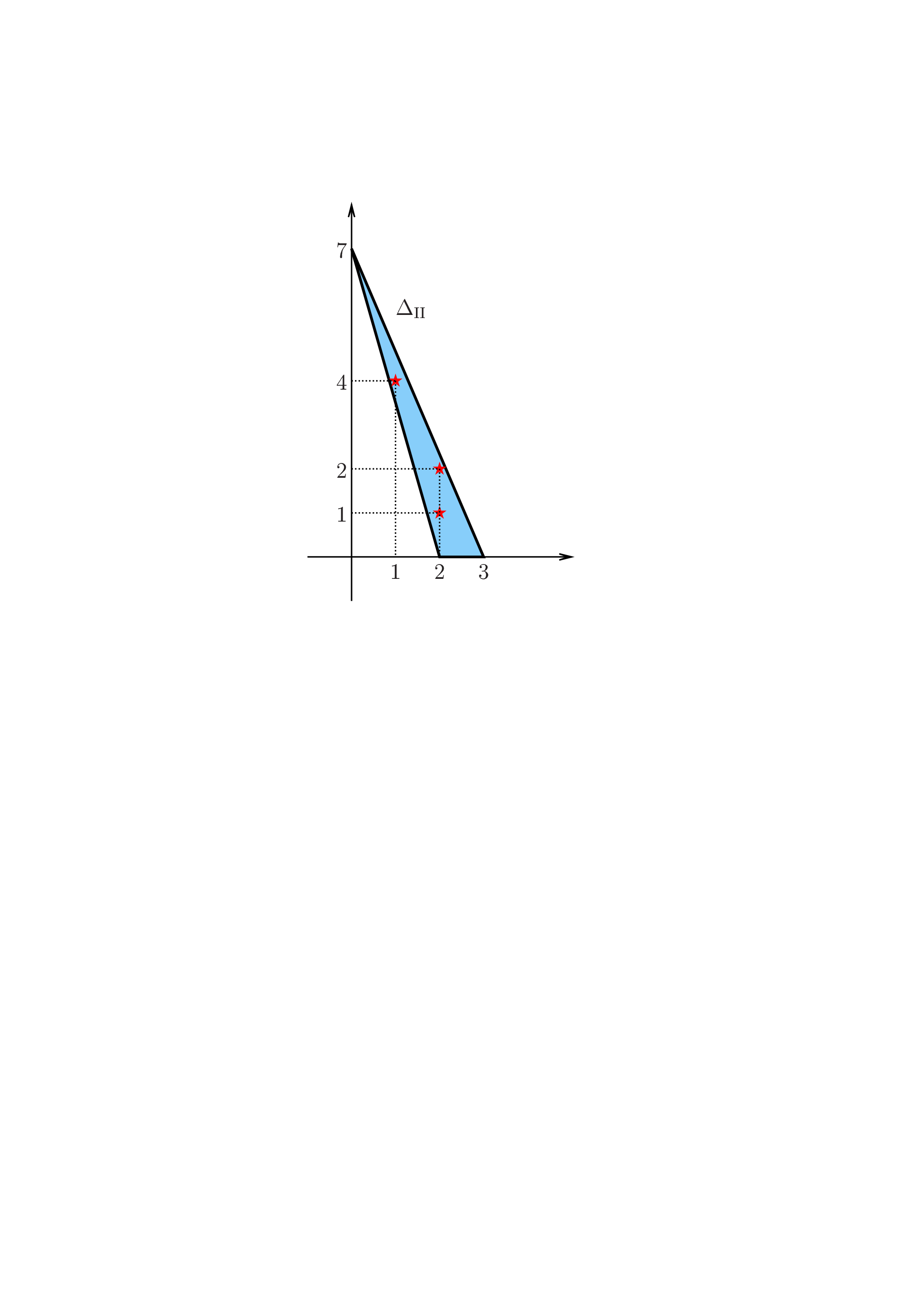}
\includegraphics[scale=0.6]{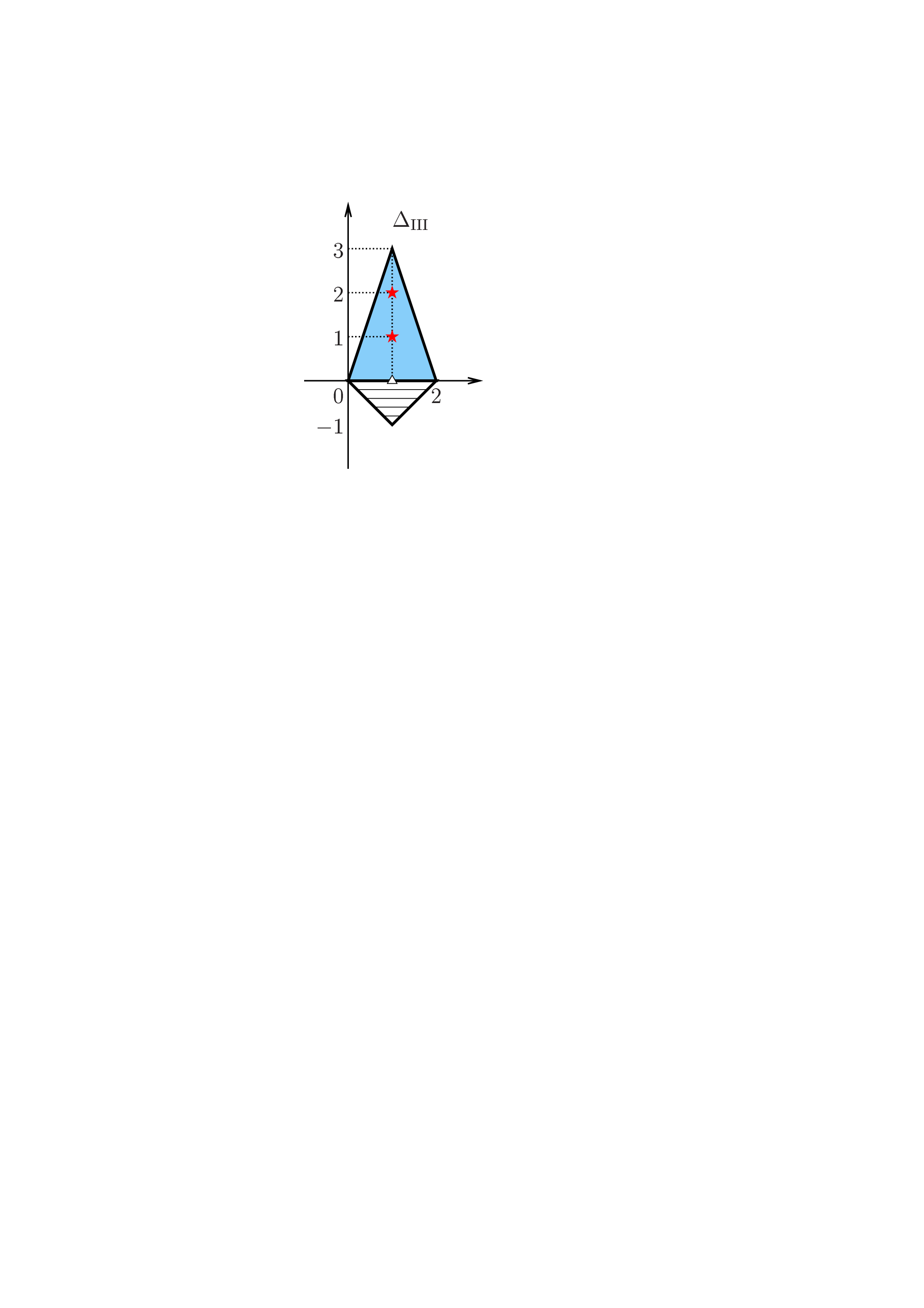}
\includegraphics[scale=0.6]{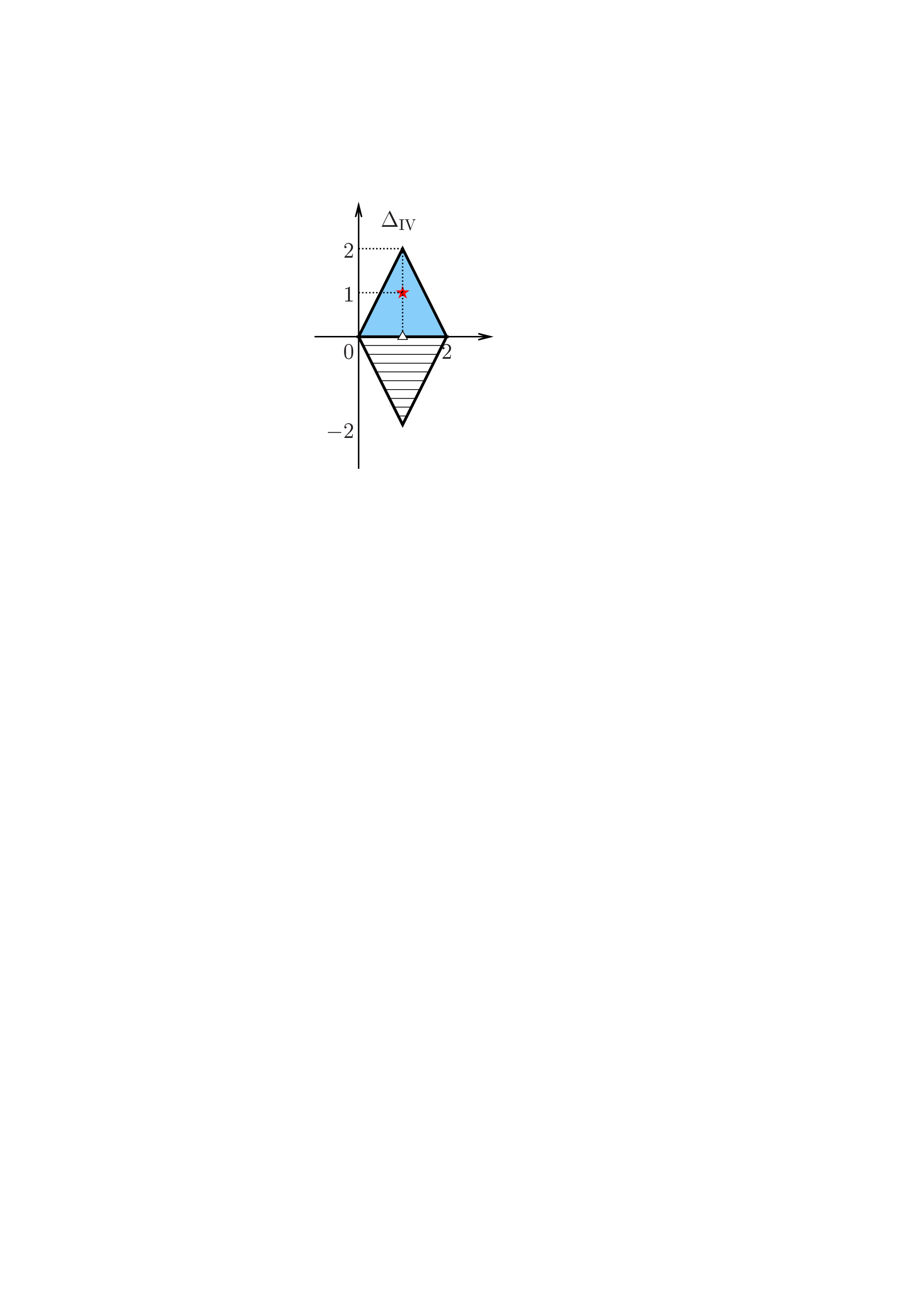}
\includegraphics[scale=0.6]{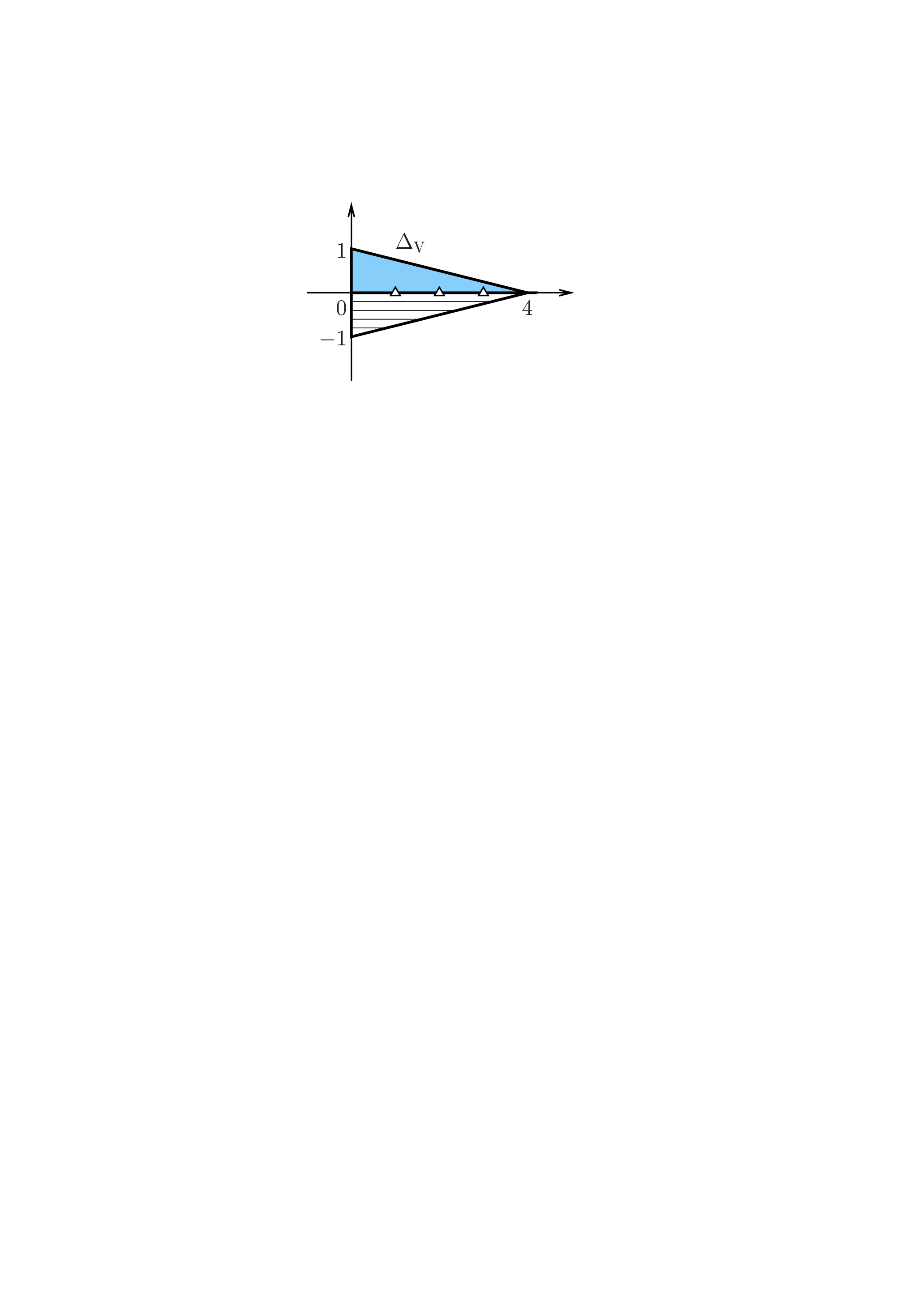}
\includegraphics[scale=0.6]{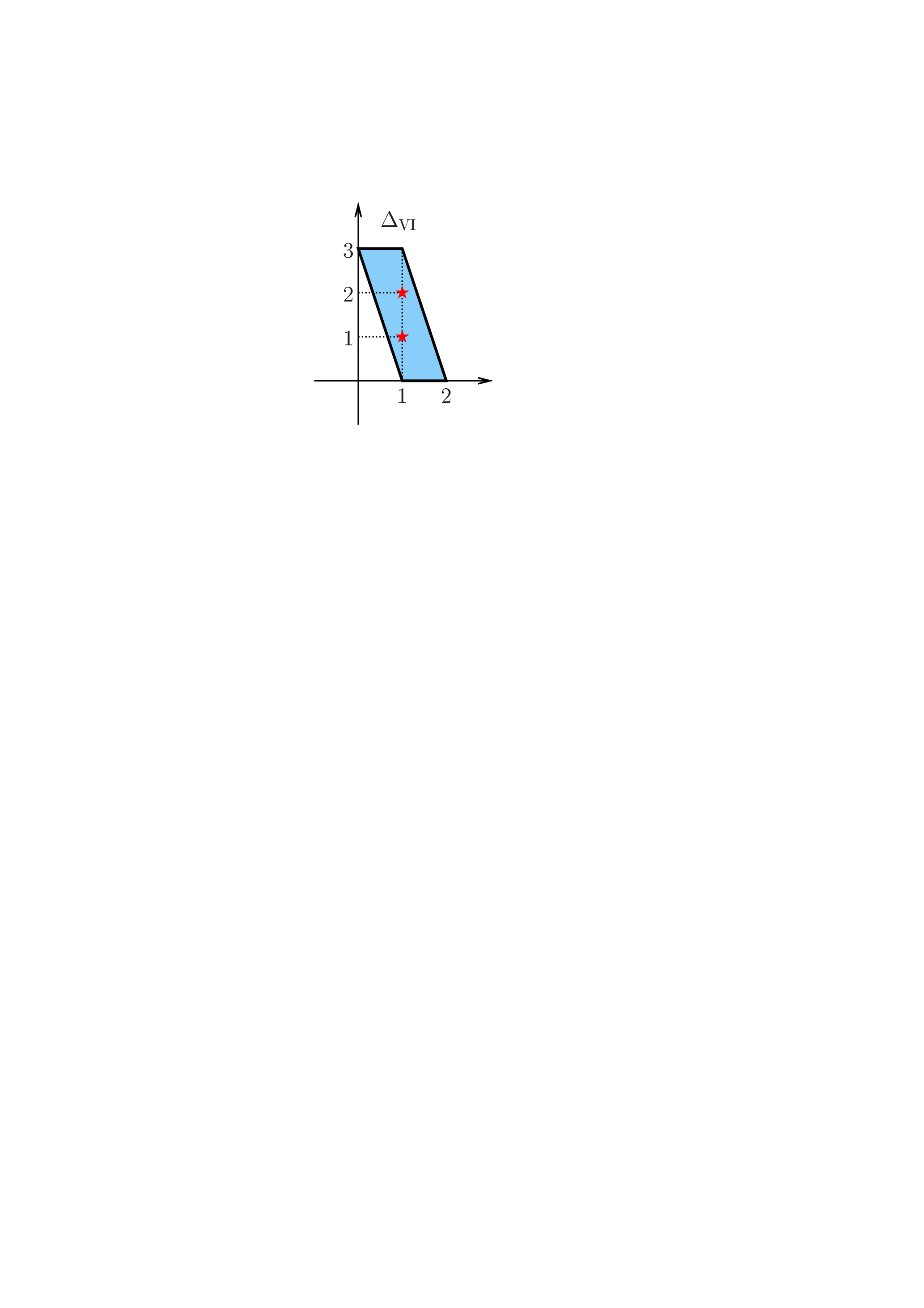}
\includegraphics[scale=0.6]{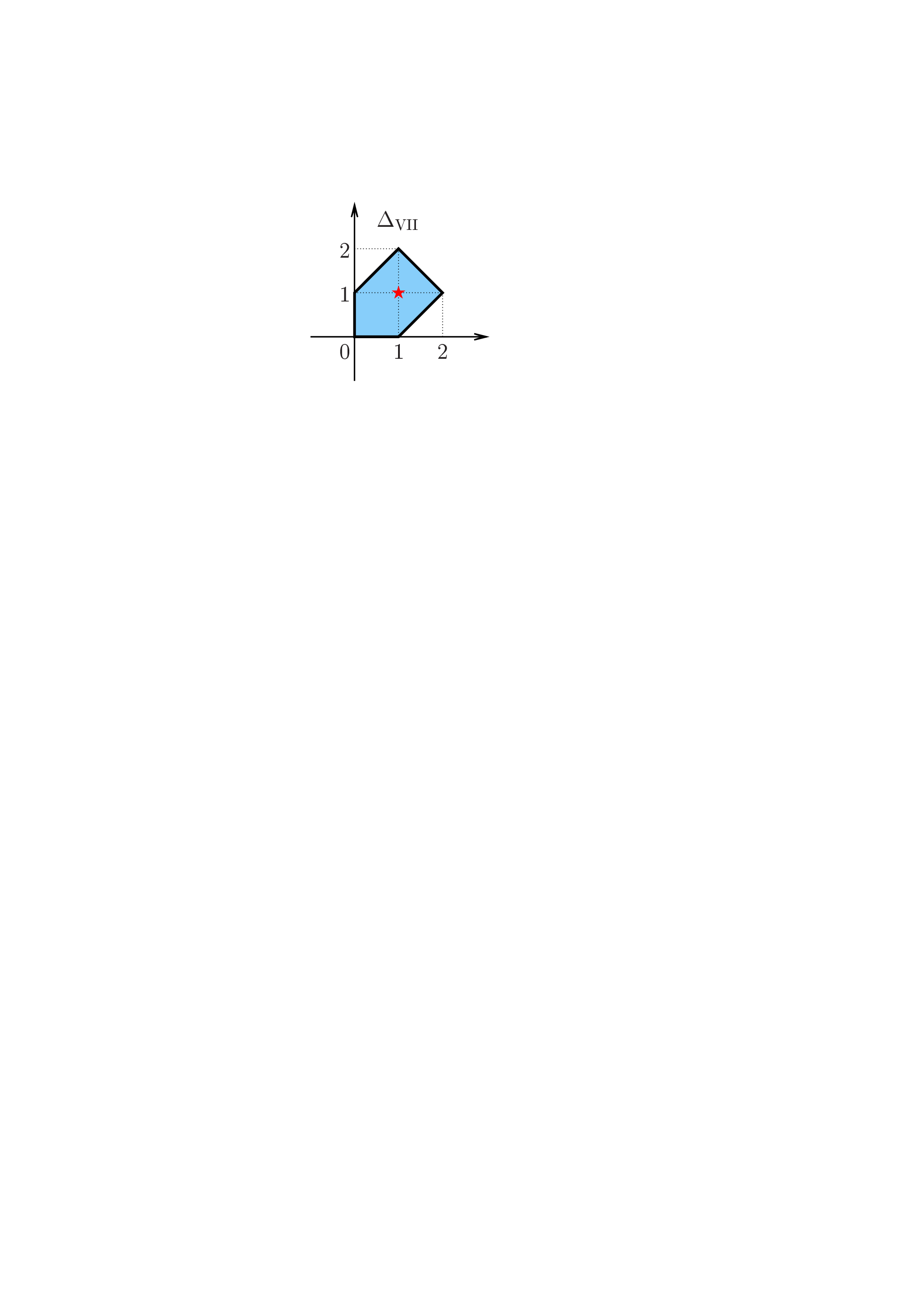}
\includegraphics[scale=0.6]{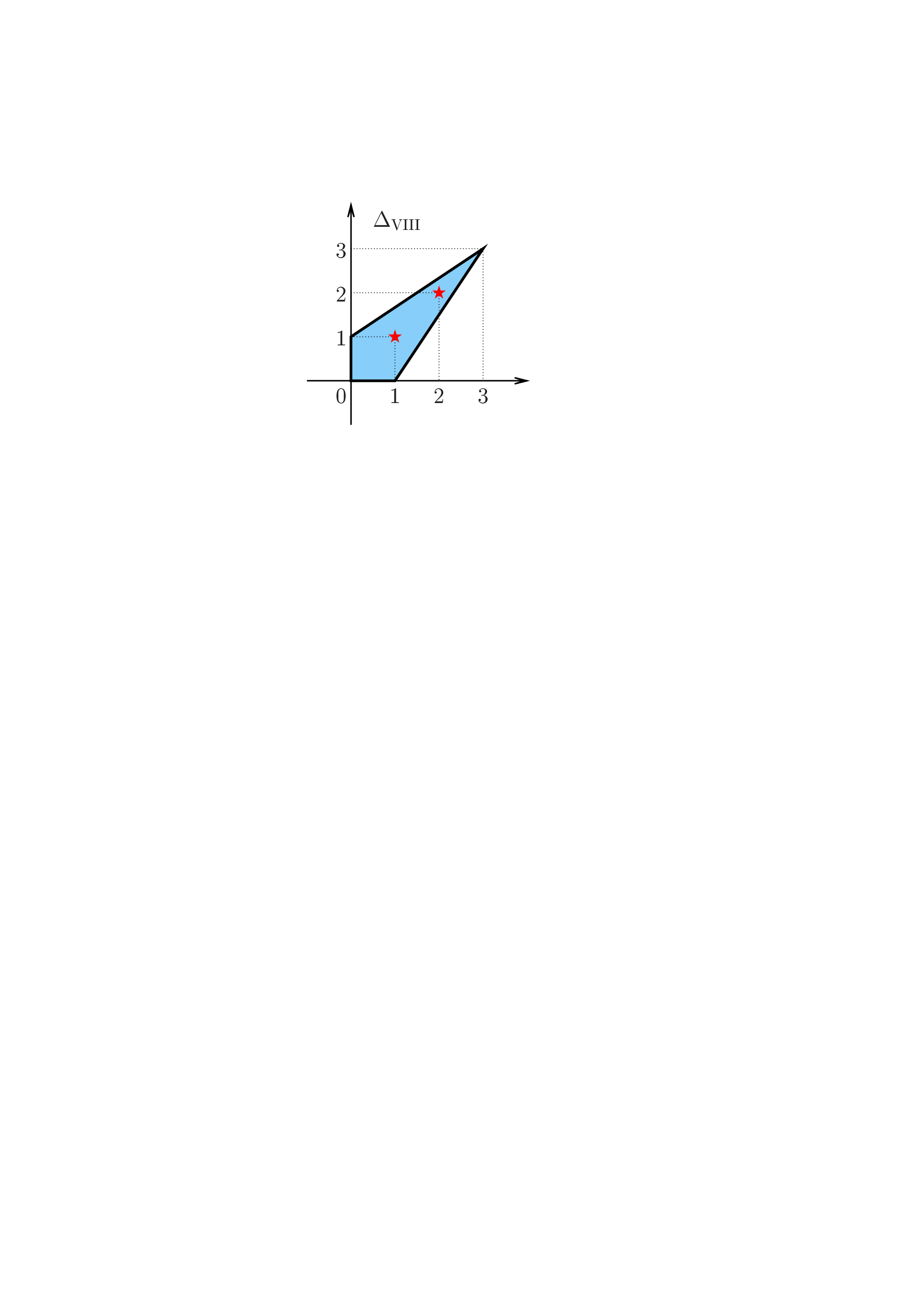}
\includegraphics[scale=0.6]{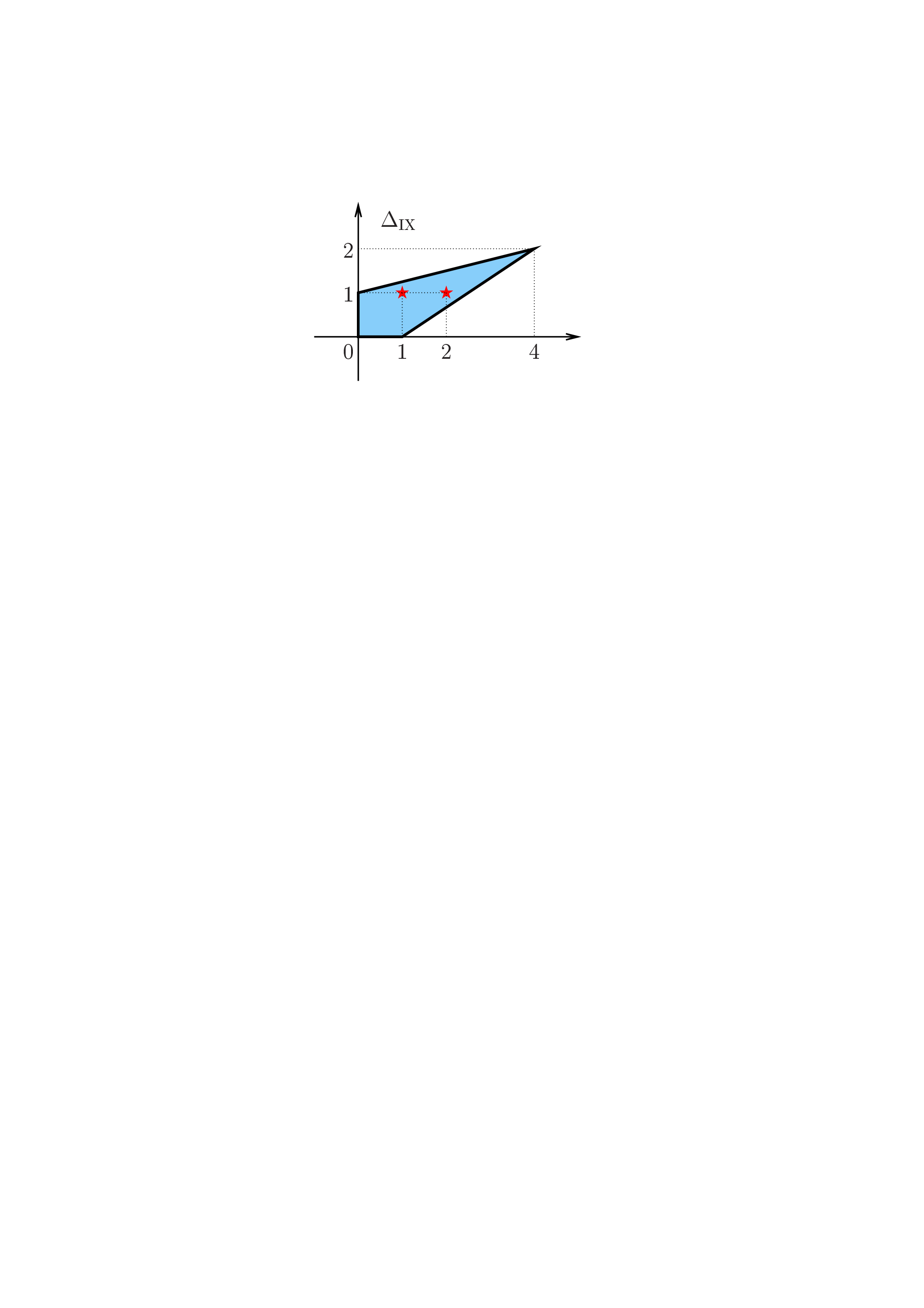}
\includegraphics[scale=0.6]{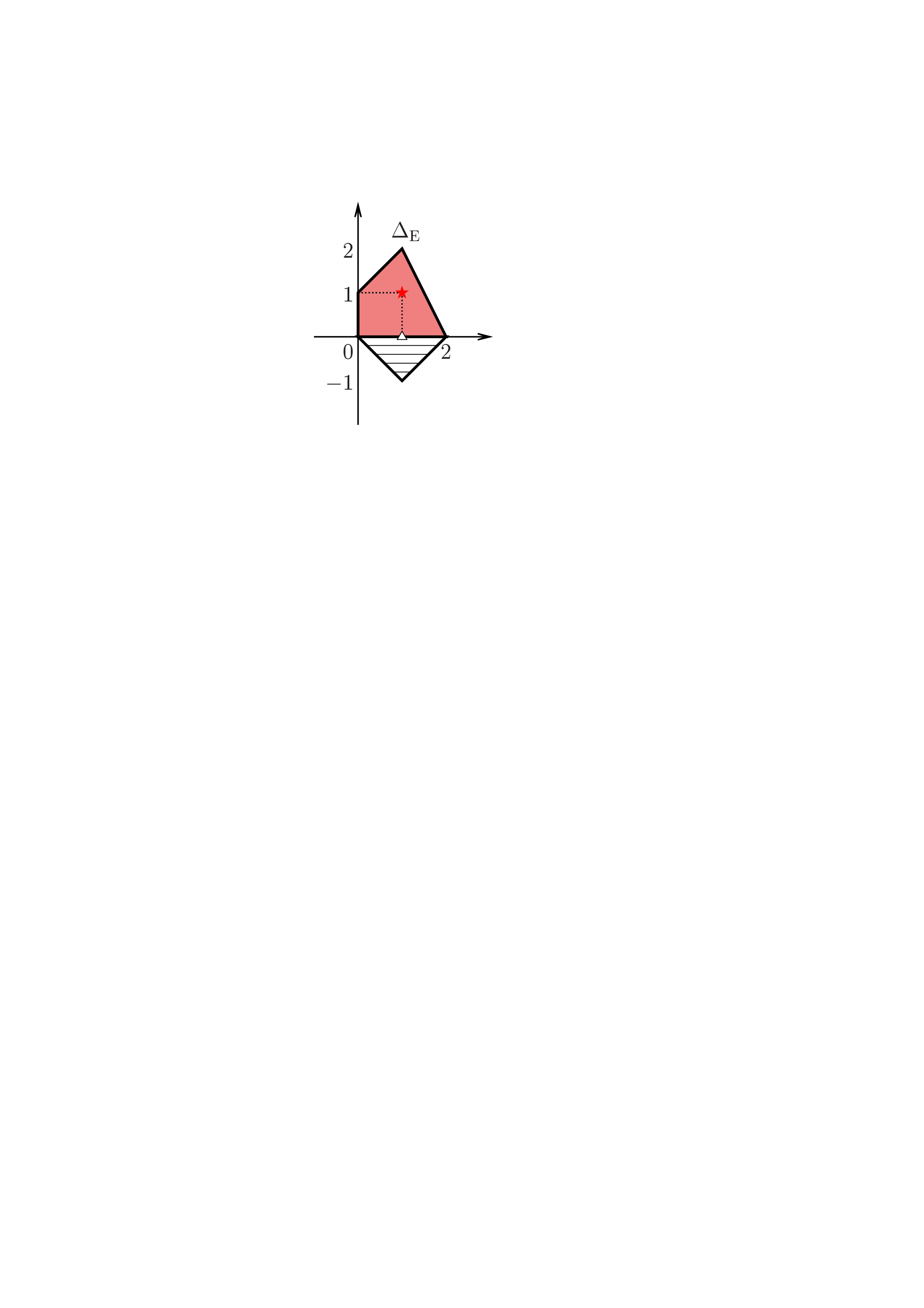}
\caption{
Polytopes in Definition \ref{tropA3}. 
The notation {\tiny $\triangle$} means a lattice point on the boundary 
which is not a vertex 
and the notation \textcolor{red}{$\star$} means an interior lattice point.
} 
\end{figure}

\subsection{Polytopes corresponding to tropical $1$-tacnodal curves}
In this subsection, 
we mention some remark on polytopes 
appearing in Definition \ref{tropA3}.

We denote an 
$m$-gon which has edges of lattice lengths 
$\ell_1, \dots, \ell_m$ 
and $I$ interior lattice points by  
\[ \Delta_{m} (I; \ell_1 ,\dots ,\ell_m ). \] 
Similarly, 
we denote a parallel $2m$-gon 
which has $m$ pairs of antipodal parallel edges of 
lattice length $\ell_1,\ldots,\ell_m$ by 
\[ \Delta^{\mathrm{par}}_{2m} (I; \ell_1 ,\dots ,\ell_m ). \]
When we consider polytopes of the same type $(I; \ell_1 ,\dots ,\ell_m )$ 
simultaneously, 
we denote one as $\Delta_m(I; \ell_1 ,\dots ,\ell_m )$
and the others as 
$\Delta'_m(I; \ell_1 ,\dots ,\ell_m )$, 
$\Delta''_m(I; \ell_1 ,\dots ,\ell_m )$ and so on.

\begin{lem}\label{unique}\it
The following holds up to $\mathrm{Aff}(\mathbb{Z}^2)$-equivalence: \\
(1)\;
A triangle $\Delta_{3}(3;1,1,1)$ 
is either $\Delta_{\I}$ or $\Delta_{\II}$. \\
(2)\;
A triangle 
$\Delta_{3}(2;2,1,1)$ is $\Delta_{\III}$. \\
(3)\;
A triangle 
$\Delta_{3}(1;2,1,1)$ is $\Delta_{\IV}$. \\
(4)\;
A triangle 
$\Delta_3(0;4,1,1)$ is $\Delta_{\V}$. \\
(5)\;
A parallelogram 
$\Delta^{\mathrm{par}}_{4}(2;1,1)$ is $\Delta_{\VI}$. \\
(6)\;
A pentagon 
$\Delta_{5}(1;1,1,1,1,1)$ is $\Delta_{\VII}$. \\
(7)\;
A non-parallel quadrangle 
$\Delta_{4}(2;1,1,1,1)$ 
is
equivalent to one of the following polytopes: 
\[
\Delta_{\VIII}, \;\;
\Delta_{\IX}, \;\;
\mathrm{Conv}\{ (1,0),(0,1),(2,1),(1,3) \}.
\]
\end{lem}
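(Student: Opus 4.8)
The strategy throughout is the same for all seven parts: use the classification of lattice polygons by the number of interior lattice points together with Pick's theorem to pin down the possible shapes, then normalize by an element of $\mathrm{Aff}(\Z^2)$. Recall that the area of $\Delta_m(I;\ell_1,\dots,\ell_m)$ is determined by Pick's formula $\mathrm{Area}=I+\tfrac12 B-1$, where $B=\sum_i\ell_i$ is the number of boundary lattice points; this fixes the area in each case. I would begin with the triangle cases (1)--(4), since $\mathrm{Aff}(\Z^2)$ acts very transitively on triangles: any lattice triangle can be moved so that one vertex is the origin and one edge lies along the positive $x$-axis, and then the opposite vertex is determined up to the remaining shear $(x,y)\mapsto(x+ky,y)$ and the reflection $y\mapsto -y$. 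For part (4), $\Delta_3(0;4,1,1)$ has area $\tfrac12\cdot 6-1=2$; placing the edge of length $4$ along $[(0,0),(4,0)]$ forces the apex to have height $1$, and a shear brings it to $(0,1)$, giving $\Delta_{\V}$. For part (3), $\Delta_3(1;2,1,1)$ has area $\tfrac12$, and I would use the known fact that a lattice triangle with exactly one interior point is $\mathrm{Aff}(\Z^2)$-equivalent to one of the three reflexive triangles; only the one with a boundary edge of length $2$ and two of length $1$ survives, and it is $\Delta_{\IV}$. Part (2), $\Delta_3(2;2,1,1)$ with area $2$, is handled the same way: put $[(0,0),(2,0)]$ as the length-$2$ edge, so the apex is at height $2$ (since area $=2$ forces base$\times$height$/2 = 2$), then use a shear to bring the apex to $(1,3)$ — wait, height $2$ gives area $2$ only if the apex has $y$-coordinate $2$, so the apex is $(a,2)$; a shear normalizes $a$, and checking the interior-point count (which must be $2$) and the side lengths (the two slanted edges must have length $1$) pins down the apex, yielding $\Delta_{\III}=\mathrm{Conv}\{(0,0),(2,0),(1,3)\}$ after the appropriate normalization. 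For part (1), $\Delta_3(3;1,1,1)$ has area $\tfrac72$; normalize one edge to $[(1,0),(2,0)]$ (it must be a primitive boundary edge, i.e.\ not contain a lattice point) — actually normalize so that a vertex is at the apex and the opposite edge is primitive along the $x$-axis, then the apex has $y$-coordinate $7$, and a shear plus counting interior points distinguishes exactly the two cases $\Delta_{\I}$ and $\Delta_{\II}$.

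For part (5), the parallelogram $\Delta^{\mathrm{par}}_4(2;1,1)$: since all four edges are primitive, an element of $\mathrm{GL}_2(\Z)$ sends one pair of parallel primitive edges to the horizontal direction $(1,0)$, so the parallelogram becomes $\mathrm{Conv}\{(0,0),(1,0),(0,h),(1,h)\}$ up to a shear fixing the horizontal direction — but wait, the other pair of edges need not be vertical, so after the $\mathrm{GL}_2(\Z)$ move it is $\mathrm{Conv}\{(0,0),(1,0),(a,h),(a+1,h)\}$ with $h$ = lattice length of the other edge; a shear $(x,y)\mapsto(x-ky,y)$ normalizes $a$ modulo... actually $a$ ranges over residues mod $1$ so $a\in\{0\}$... no: the shear changes $a\mapsto a-kh$, so $a$ is determined mod $h$. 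The interior-point count $I=2$ via Pick gives area $=2+\tfrac12\cdot 4-1=3$, so $h=3$, and then $a\in\{0,1,2\}$, with $a=0$ and $a=2$ equivalent by reflection, and $a=1$ giving a sheared copy; checking that the slanted edge has lattice length $1$ in each case and matching with $\Delta_{\VI}=\mathrm{Conv}\{(1,0),(2,0),(0,3),(1,3)\}$ completes it. For part (6), the pentagon $\Delta_5(1;1,1,1,1,1)$ with all edges primitive: by Pick, area $=1+\tfrac52-1=\tfrac52$. Here I would use the classification of lattice polygons with exactly one interior lattice point — there are only $16$ of them up to equivalence, and a pentagon among them with all five edges primitive is unique, namely $\Delta_{\VII}$. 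Alternatively, argue directly: translate the unique interior point to the origin, so the pentagon is contained in the "square" $\{|x|,|y|\le ?\}$, and enumerate the primitive boundary lattice points visible from the origin.

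For part (7), the non-parallel quadrangle $\Delta_4(2;1,1,1,1)$ with all four edges primitive: Pick gives area $=2+2-1=3$. I would normalize using $\mathrm{GL}_2(\Z)$ so that one vertex is at the origin and the two edges from it point in primitive directions; since those two edges are primitive, one can be taken to be $(1,0)$ and the other $(0,1)$ after a further $\mathrm{GL}_2(\Z)$ transformation, so the quadrangle is $\mathrm{Conv}\{(0,0),(1,0),(0,1),P\}$ for the fourth vertex $P=(a,b)$ with $a,b>0$. The area condition and the requirement that the remaining two edges $[(1,0),P]$ and $[(0,1),P]$ be primitive (lattice length $1$), together with "non-parallel" and "exactly $2$ interior points", restrict $P$ to finitely many candidates; working these out yields $P=(3,3)$ (giving $\Delta_{\VIII}$), $P=(4,2)$ (giving $\Delta_{\IX}$), and the case giving $\mathrm{Conv}\{(1,0),(0,1),(2,1),(1,3)\}$ — this last one does not have a vertex whose two edges can both be made into the standard basis, which is why it appears as a genuinely distinct type rather than being normalizable to the origin form. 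The main obstacle, and the part requiring the most care, is part (7): one must be sure the case analysis for $P$ is exhaustive, including the configurations where \emph{no} vertex has both incident edges primitive-and-unimodular-spanning, and one must correctly verify the interior-point counts (not merely the areas) to exclude spurious candidates and to distinguish the three surviving types. Parts (1)--(6) are more mechanical once the Pick-theorem area and the primitivity of edges are in hand; the only subtlety there is making sure the shear normalization is carried out in the residue class that matches the named polytope $\Delta_{\bullet}$.
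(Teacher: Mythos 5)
Your overall strategy --- normalize by an element of $\mathrm{Aff}(\Z^2)$ and control the shape with Pick's theorem --- is the same as the paper's for parts (2)--(6), but there are concrete gaps in (1), (2), and (7). In (1), after placing the primitive base on the $x$-axis and the apex at height $7$, the residual shears move the base horizontally by multiples of $7$ and reflections identify the position $n$ with $6-n$; this leaves \emph{three} normal forms (the paper's $\hat{\Delta}_1,\hat{\Delta}_2,\hat{\Delta}_3$), not two. Your claim that ``a shear plus counting interior points distinguishes exactly the two cases'' therefore does not close the argument: one must exhibit an additional, non-shear lattice automorphism identifying $\hat{\Delta}_1$ with $\hat{\Delta}_3$ (the paper uses the explicit matrix $\begin{pmatrix} 3 & 1 \\ -7 & -2 \end{pmatrix}$), and separately show $\hat{\Delta}_1\not\simeq\hat{\Delta}_2$; without the extra identification you would end with three classes. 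In (2) your Pick computation is wrong: $I=2$ and $B=4$ give area $3$, not $2$, so the apex sits at height $3$ over the length-$2$ base. The triangle you describe with apex $(a,2)$ has exactly one interior point (it is $\Delta_{\IV}$, the polytope of part (3)), so as written your case analysis lands on the wrong polytope; the ``wait'' in your text signals the inconsistency but does not resolve it.

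In (7) your normalization of the quadrangle to $\mathrm{Conv}\{(0,0),(1,0),(0,1),P\}$ presupposes a vertex whose two incident primitive edge vectors form a basis of $\Z^2$; that hypothesis fails precisely for the third polytope $\mathrm{Conv}\{(1,0),(0,1),(2,1),(1,3)\}$ (every corner there spans a sublattice of index at least $2$), as you yourself observe. So your enumeration, as set up, can only produce $\Delta_{\VIII}$ and $\Delta_{\IX}$, and the case that actually requires work --- exhaustively classifying $\Delta_4(2;1,1,1,1)$ with no unimodular corner and showing exactly one class arises --- is left as a caveat rather than proved; you also need an invariant finer than the interior-point count (all three types have $I=2$) to show the three types are pairwise inequivalent. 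The paper circumvents the unimodular-corner issue by a different decomposition: it cuts the quadrangle along a diagonal into two lattice triangles, classifies the possible pairs using the standard models of the constituent triangles, obtains six candidate quadrangles $\hat{P}_1,\dots,\hat{P}_6$, identifies them by explicit matrices into three classes, and finally distinguishes the classes by the configuration of interior lattice points and vertices. Until you supply the missing equivalence in (1), correct the arithmetic in (2), and give a complete argument for the no-unimodular-corner case in (7) together with the pairwise inequivalence, the proposal is not a proof.
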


\begin{proof}
(1)\;
We can take 
$A \in \mathrm{Aff}(\mathbb{Z}^2)$ 
which maps 
$\Delta_{3}(3;1,1,1)$ 
to 
\[
\Hat{\Delta}_n
:=\mathrm{Conv}\{(0,q),(n,0),(n+1,0)\}
\]
for some $q, n$. 
By Pick's formula, we obtain $q=7$. 
We remark that,  
$\Hat{\Delta}_n$ 
and
$\Hat{\Delta}_{n+7}$ 
are equivalent by 
\begin{equation}\tag{***}
\begin{pmatrix}
1 & 1 \\
0 & 1 \label{eq:1101}
\end{pmatrix}. 
\end{equation}
Moreover we do not have to discuss the cases $n=0$ and $n=6$
since they have an edge of lattice length more than $1$.

We get the isomorphisms 
\[
\Hat{\Delta}_1 \simeq 
\Hat{\Delta}_5,\ \ \ 
\Hat{\Delta}_2 \simeq 
\Hat{\Delta}_4 
\]
by the reflection, and 
$\Hat{\Delta}_1 \simeq \Hat{\Delta}_3$
by 
\[
\begin{pmatrix}
 3 &  1 \\
-7 & -2 
\end{pmatrix}.
\]
Because of the configuration of 
interior lattice points, 
we can show that 
$\Hat{\Delta}_1 = \Delta_{\I}$ and $\Hat{\Delta}_2=\Delta_{\II}$ 
are not isomorphic. \\
(2)\;
For any 
$\Delta_{3}(2;2,1,1)$, 
there exists 
$A \in \mathrm{Aff}(\mathbb{Z}^2)$ 
such that $\Delta_{3}(2;2,1,1)$ 
maps to 
\[\mathrm{Conv} \{ (p,0),(p+2,0),(0,q) \}\]
for some $p,q \in \mathbb{N}$. 
Then we have $q=3$ by Pick's formula, 
and we may assume $p=0,1,2$ by the isomorphism (\ref{eq:1101}). 
But the cases $p=0,1$ do not satisfy the conditions of lattice length. 
Hence we get $p=2$.
This triangle is equivalent to 
$\Delta_{\III}$. 

The claims (3), (4), (5) and (6) can be proved 
by the same method. \\
(6)\; 
We can split $P:=\Delta_4(2;1,1,1,1)$ into two triangles 
which satisfies one of the following: \\
\textbullet \;
$\Delta_3(1;2,1,1)$ and $\Delta_3(0;2,1,1)$ 
such that their intersection is a segment of length~$2$, \\
\textbullet \;
$\Delta_3(2;1,1,1)$ and $\Delta_3(0;1,1,1)$ 
such that their intersection is a segment of length~$1$, \\
\textbullet \;
$\Delta_3(0;3,1,1)$ and $\Delta'_3(0;3,1,1)$ 
such that their intersection is a segment of length~$3$, \\
\textbullet \;
$\Delta_3(1;1,1,1)$ and $\Delta'_3(1;1,1,1)$ 
such that their intersection is a segment of length~$1$. 

In the first case, 
$\Delta_3(1;2,1,1)$ 
is uniquely determined as $\mathrm{Conv}\{ (0,0),(2,0), (1,2)\}$, 
so $P$ has two descriptions
\[
\hat{P}_1
:=\mathrm{Conv}\{ (0,0),(2,0),(1,2),(0,-1) \},\;\;
\hat{P}_2
:=\mathrm{Conv}\{ (0,0),(2,0),(1,2),(1,-1) \}. 
\]

In the second case, by \cite[Lemma 4.1]{S}, any triangle 
$\Delta_3(2;1,1,1)$ is isomorphic to 
\[
Q:=\mathrm{Conv}\{ (0,0), (3,2), (2,3) \}.
\]
We denote the other triangle, which is $\Delta_3(0;1,1,1)$,  
by $R$. 
We can easily check that $Q$ is equivalent to
\[
Q_1:=\mathrm{Conv}\{ (0,1), (0,2), (0,5) \}, \;\; 
Q_2:=\mathrm{Conv}\{ (0,2), (0,3), (0,5) \}. 
\]
If the intersection of $Q$ with $R$ 
is 
$\mathrm{Conv}\{(0,0), (3,2)\} \subset Q$ 
or 
$\mathrm{Conv}\{(0,0), (2,3)\} \subset Q$, 
then we can assume that 
the intersection is the bottom edge of $Q_1$. 
Similarly, if the intersection is 
$\mathrm{Conv}\{(2,3), (3,2)\} \subset Q$, 
then we can assume that $R$ shares the bottom edge of $Q_2$. 
Thus, the polytope $P$ is equivalent to either 
\[
\hat{P}_3
:=\mathrm{Conv}\{ (1,0),(2,0),(0,5),(2,-1) \}
\;\; \text{or} \;\;
\hat{P}_4
:=\mathrm{Conv}\{ (2,0),(3,0),(0,5),(3,-1) \}. 
\]

In the third and fourth cases, 
we obtain the following polytopes in the same way as above: 
\[
\hat{P}_5
:=\mathrm{Conv}\{ (0,0),(0,1),(1,-1),(3,0) \}, \;\; 
\hat{P}_6
:=\mathrm{Conv}\{ (0,0),(0,1),(2,-1),(3,0) \}. 
\]

Between the polytopes $\hat{P}_1,\dots,\hat{P}_6$, 
we have the following isomorphisms: 
\[
\hat{P}_1 \simeq \hat{P}_3 
\;\; \text{by} \;\; 
\begin{pmatrix}
-1 & 0 \\
 3 & -1 \\
\end{pmatrix}, \;\;
\hat{P}_5 \simeq \hat{P}_4 
\;\; \text{by} \;\; 
\begin{pmatrix}
-1 & -1 \\
 2 &  1 \\
\end{pmatrix}, \;\;
\hat{P}_6 \simeq \hat{P}_2 
\;\; \text{by} \;\; 
\begin{pmatrix}
 0 & -1 \\
 1 &  1 \\
\end{pmatrix}.  
\]
Notice that, the polytope $\hat{P}_2$
is the translation of 
$\mathrm{Conv}\{(1,0), (0,1), (2,1), (1,3)\}$. 
Also, the polytopes 
$\hat{P}_3$ and $\hat{P}_4$ 
are equivalent to 
$\Delta_{\IX}$ 
and 
$\Delta_{\VIII}$ 
by 
\[ 
\begin{pmatrix}
1 & 1 \\
-1 & 0 \\
\end{pmatrix}
:\Z^2 \to \Z^2, 
\]
respectively. 

Furthermore, 
by the configuration of interior lattice points and vertices, 
we obtain 
$\Delta_{\VIII} \not\simeq \Delta_{\IX}$, 
$\Delta_{\IX} \not\simeq \mathrm{Conv}\{(1,0), (0,1), (2,1), (1,3)\}$, 
and 
$\mathrm{Conv}\{(1,0), (0,1), (2,1), (1,3)\} \not\simeq \Delta_{\VIII}$. 
\end{proof}

\begin{lem}\label{expol}\it
A quadrangle $\Delta_4(1;2,1,1,1)$ is $\Delta_{\E}$. 
\end{lem}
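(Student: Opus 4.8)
The plan is to normalize the unique edge of lattice length $2$ by an affine transformation, and then enumerate the finitely many possibilities for the two remaining vertices, exactly in the spirit of parts (2)--(6) of Lemma~\ref{unique}.

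First, by Pick's formula a polygon $\Delta_4(1;2,1,1,1)$ has $5$ boundary lattice points and $1$ interior point, hence area $5/2$. I would apply $A\in\mathrm{Aff}(\Z^2)$ carrying the length-$2$ edge $\sigma$ to the segment from $(0,0)$ to $(2,0)$, with the quadrangle lying in the upper half-plane, and write its vertices in counterclockwise order as $(0,0),(2,0),Q,P$. Then $P$, $Q-(2,0)$ and $Q-P$ are primitive (these are the three unit edges), and convexity together with the fact that all four points are genuine vertices forces the second coordinates $p_2,q_2$ of $P,Q$ to be positive. The affine maps preserving this configuration are precisely the shears $(x,y)\mapsto(x+ky,y)$, $k\in\Z$, and their composites with $r\colon(x,y)\mapsto(2-x,y)$; note that $r$ reverses orientation and therefore interchanges the roles (and in particular the heights $p_2,q_2$) of $P$ and $Q$. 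This residual symmetry group is still available.

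Next I would bound the heights. The quadrangle contains the triangle with vertices $(0,0),(2,0),P$, whose area is $p_2$; since the quadrangle has area $5/2$ this gives $p_2\in\{1,2\}$, and likewise $q_2\in\{1,2\}$. Using a shear I can reduce the first coordinate of $P$ modulo $p_2$, and primitivity then leaves only the representatives $P=(0,1)$ (when $p_2=1$) and $P=(1,2)$ (when $p_2=2$). Using $r$ I may also assume $p_2\le q_2$, leaving the cases $(p_2,q_2)=(1,1)$ and $(1,2)$ (their mirror images $(2,1),(2,2)$ being handled by $r$). In each case the shoelace identity $2q_2+q_1p_2-p_1q_2=5$ pins down the first coordinate $q_1$ of $Q$, after which one checks primitivity of $Q-(2,0)$ and $Q-P$ and convexity: the case $(1,1)$ forces $q_1=3$, contradicting primitivity of $Q-P$; the case $(2,2)$ makes the area equation have no integer solution; and the case $(1,2)$ yields exactly $\mathrm{Conv}\{(0,0),(2,0),(1,2),(0,1)\}=\Delta_{\E}$, whose mirror $(2,1)$ is $r(\Delta_{\E})=\Delta_{\E}$.

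The only real work is this routine finite case check, so there is no serious obstacle; the one point that needs care is the identification of the residual symmetry group (shears together with $r$), since this is what guarantees that the normalization loses nothing and that no quadrangle is double-counted. An alternative route, parallel to the proof of Lemma~\ref{unique}(7), would be to cut $\Delta_4(1;2,1,1,1)$ along a diagonal into two triangles of controlled edge lengths and interior-point counts and invoke the triangle classifications already in hand, but the direct normalization is shorter here because the length-$2$ edge supplies a canonical starting position.
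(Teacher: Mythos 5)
Your proof is correct, but it takes a genuinely different route from the paper. The paper proves Lemma~\ref{expol} by cutting $\Delta_4(1;2,1,1,1)$ along a diagonal into two sub-polytopes of the six combinatorial types (3-1)--(3-6), excluding one case via Lemma~\ref{polyt1}, and then reassembling the pieces using the classifications already established (e.g.\ in Lemma~\ref{unique} and Remark~\ref{KR}); this is the ``alternative route'' you mention at the end. Your argument instead normalizes the unique length-$2$ edge to $[(0,0),(2,0)]$, identifies the residual symmetry group (integer shears together with $r\colon(x,y)\mapsto(2-x,y)$), bounds the heights $p_2,q_2\le 2$ by comparing the triangle over the base with the total area $5/2$ from Pick's formula, and pins down the remaining vertex by the shoelace identity $2q_2+q_1p_2-p_1q_2=5$ plus primitivity. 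This is shorter and self-contained, at the cost of not reusing the decomposition machinery that the paper exploits throughout Section~3. Two small inaccuracies, neither of which creates a gap: the case $(p_2,q_2)=(2,2)$ is its own mirror, not the mirror of $(1,2)$ as your parenthetical suggests, but you do treat it separately (the area equation $2+2q_1=5$ has no integer solution, as you say); and $r(\Delta_{\E})$ is literally $\mathrm{Conv}\{(0,0),(2,0),(1,2),(2,1)\}$, not $\Delta_{\E}$ itself, though it is of course $\mathrm{Aff}(\Z^2)$-equivalent to it, which is all the statement requires.
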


\begin{proof}
We can split 
$P=\Delta_4(1;2,1,1,1)$
into two polytopes $Q$, $R$ which are either 
\begin{itemize}
\item[(3-1)]
$Q=\Delta_3(0;1,1,1)$, $R=\Delta_4(1;1,1,1,1)$ 
and these polytopes share an edge of length $1$, 
\item[(3-2)] 
$Q=\Delta_3(0;2,1,1)$, $R=\Delta_4(0;2,1,1,1)$ 
and these polytopes share the edge of length $2$, 
\item[(3-3)] 
$Q=\Delta_3(1;1,1,1)$, $R=\Delta_4(0;1,1,1,1)$ 
and these polytopes share an edge of length $1$, 
\item[(3-4)] 
$Q=\Delta_3(1;2,1,1)$, $R=\Delta_3(0;1,1,1)$ 
and these polytopes share an edge of length $1$, 
\item[(3-5)] 
$Q=\Delta_3(0;2,2,1)$, $R=\Delta_3(0;2,1,1)$ 
and these polytopes share an edge of length $2$, or 
\item[(3-6)] 
$Q=\Delta_3(0;2,1,1)$, $R=\Delta_3(1;1,1,1)$ 
and these polytopes share an edge of length $1$. 
\end{itemize}
Among them, case (3-5) can not occur by Lemma \ref{polyt1}. \\
(3-1)\;
If $R$ is a parallelogram, then we can assume that $R$ is 
\[
\mathrm{Conv}\{ (1,0), (2,0), (0,2), (1,2) \}
\]
and the common edge of $R$ with $Q$ is its bottom edge. 
Hence, we get 
\[
Q=\mathrm{Conv}\{ (1,0), (2,0), (2,-1) \},
\] 
by Pick's formula, 
but their union does not satisfy the condition of $P$. 

If $R$ is not a parallelogram, then we can assume that $R$ is 
\[
\mathrm{Conv}\{ (0,0), (1,0), (0,1), (2,2) \}
\]
and the common edge with $Q$ is either 
\[
\mathrm{Conv}\{ (0,0), (1,0) \}
\;\; \text{or} \;\;
\mathrm{Conv}\{ (1,0), (2,2) \}. 
\] 
In the former case, $Q$ is uniquely determined as 
\[
\mathrm{Conv}\{ (0,0), (1,0), (0,-1)  \} 
\]
and the union 
$Q \cup R = \mathrm{Conv}\{ (0,-1), (1,0), (0,1), (2,2) \}$
is isomorphic to $P$. 
In the latter case, we can assume that $R$ is 
\[
\mathrm{Conv} \{ (1,0), (2,0), (0,2), (0,3) \}
\]
and the common edge is the bottom edge. 
Then $Q$ must be 
\[
\mathrm{Conv}\{ (1,0), (2,0), (2,1) \}, 
\]
but the union $Q \cup R$ does not satisfy the condition of $P$. \\
(3-2)\;
We can assume that $R$ is 
\[
\mathrm{Conv}\{ (0,0), (2,0), (0,1), (1,1) \}
\]
and the common edge is the bottom edge. 
Then $Q$ must be either 
\[
\mathrm{Conv}
\{ (0,0), (2,0), (0,-1) \}
\;\; \text{or}\;\;
\mathrm{Conv}
\{ (0,0), (2,0), (3,-1) \}. 
\]
In both cases, the union $Q \cup R$ are isomorphic to $P$. \\
(3-3)\;
We can assume that $R$ is 
\[ \mathrm{Conv}\{ (0,0),(1,0),(0,1),(1,1) \}, \]
but any union with $Q$ does not satisfy the condition of $P$. \\
(3-4)\;
We can assume that $Q$ is 
\[
\mathrm{Conv}\{ (0,0), (1,0), (-2,4) \}
\]
and the common edge is its bottom edge. 
Then $R$ must be 
\[
\mathrm{Conv}
\{ (0,0), (1,0), (1,-1) \}. 
\]
Their union $R \cup Q$ is isomorphic to $P$. \\
(3-6)\;
We assume that $R$ is 
\[
\mathrm{Conv}\{ (1,0), (2,0), (0,3) \}
\]
and the common edge is its bottom edge. 
Then $Q$ must be either 
\[
\mathrm{Conv}
\{ (1,0), (2,0), (2,-2) \}
\;\; \text{or}\;\;
\mathrm{Conv}
\{ (1,0), (2,0), (3,-2) \}. 
\]
In both cases, the union $Q \cup R$ is isomorphic to $P$. 
\end{proof}

\subsection{Existence of $1$-tacnodal curves for $\Delta_{\I}, \dots 
\Delta_{\IX}$}
\label{existtac}

For a polytope $P$, 
we set 
\[
\mathcal{F}(P) 
:=\{ f \in \C[x,y];N_f=P \}.
\]
We denote the plane curve defined by 
$f \in \mathcal{F}(P)$ in $X(P)$ 
as $ V_f$. 
We remark that 
$V_f$ is a member of $|D(P)|$. 
We consider the following two conditions: 
\begin{itemize}
\item[(S1)] 
$V_f \subset X(P)$ 
is a 1-tacnodal curve whose singular point is contained in 
the maximal torus of $X(P)$, 
\item[(S2)]
$V_f$ intersects the toric boundary $X(\partial P)$ transversally. 
\end{itemize}

In the rest of this section, 
except cases ($\III$), ($\IV$), and ($\V$), 
we only consider polytopes whose edges are only of length one. 
Hence the condition (S2) is automatically satisfied 
except the three cases.

\begin{lem}\label{tacpiece}\it
For each $i=\I, \II$ 
and given coefficients 
$c_{ij}$ on the vertices $(i,j) \in V(P)$, 
there is a polynomial 
$f \in \mathcal{F}(\Delta_i)$
which has the fixed coefficients on the vertices 
and satisfies the conditions (S1), (S2). 
Furthermore, 
there is no polynomial 
$f\in \mathcal{F}(\Delta_i)$ 
that defines a curve with 
more complicated singularity than $A_3$, 
i.e., 
the curve does not have an isolated singularity 
whose Milnor number is more than $3$.  
\end{lem}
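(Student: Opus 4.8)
The plan is to reduce, for each $i=\I,\II$, the analysis of the curves $V_f$ with $f\in\mathcal F(\Delta_i)$ to a one-variable problem, using that the lattice width of $\Delta_{\I}$ (resp.\ $\Delta_{\II}$) is $2$ (resp.\ $3$): after dividing by a suitable monomial, every $f\in\mathcal F(\Delta_{\I})$ is a degree-$2$ polynomial in $z$ and every $f\in\mathcal F(\Delta_{\II})$ a degree-$3$ polynomial in $z$. Concretely, the lattice points of $\Delta_{\I}$ are $(0,7),(1,0),(2,0),(1,1),(1,2),(1,3)$, so
\[
f=c_{20}z^2+g(w)z+c_{07}w^7,\qquad g(w)=c_{10}+c_{11}w+c_{12}w^2+c_{13}w^3 .
\]
Completing the square via $Z:=2c_{20}z+g(w)$ identifies $V_f\cap(\C^*)^2$ with the plane curve $\{Z^2=h(w)\}$ (points with $w=0$ or $Z=g(w)$ removed), where $h(w):=g(w)^2-4c_{20}c_{07}w^7$ has degree exactly $7$. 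Since $h(0)=c_{10}^2\neq0$, every singular point of $V_f$ in the torus lies over a root $w_0\in\C^*$ of $h$ with $\mathrm{mult}_{w_0}h=m\geq2$, at which $V_f$ has an $A_{m-1}$-singularity; so $V_f$ is $1$-tacnodal with tacnode in the torus iff $h$ has a quadruple root and three further simple roots, all distinct. For $\Delta_{\II}$ the lattice points are $(0,7),(2,0),(3,0),(2,1),(2,2),(1,4)$, so $f=c_{30}z^3+A(w)z^2+c_{14}w^4z+c_{07}w^7$ with $A(w)=c_{20}+c_{21}w+c_{22}w^2$, and $V_f$ becomes a trigonal cover of the $w$-line whose branch locus is the $w$-discriminant $D(w)$, again of degree $7$; its torus singularities occur over multiple roots of $D$ and over the points where the fibre-cubic degenerates to a triple root.

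I would first settle the ``furthermore'' statement for $\Delta_{\I}$. If $w_0\in\C^*$ is a root of $h=g^2-\lambda w^7$ ($\lambda:=4c_{20}c_{07}\neq0$) of multiplicity $\geq4$, put $\mu:=g(w_0)$; then $\mu^2=\lambda w_0^7\neq0$, and solving $h(w_0)=h'(w_0)=h''(w_0)=h'''(w_0)=0$ successively forces
\[
g(w_0)=\mu,\quad g'(w_0)=\tfrac{7\mu}{2w_0},\quad g''(w_0)=\tfrac{35\mu}{4w_0^2},\quad g'''(w_0)=\tfrac{105\mu}{8w_0^3}.
\]
As $g$ has degree $\leq3$, $g^{(4)}\equiv0$, so $h^{(4)}(w_0)=6g''(w_0)^2+8g'(w_0)g'''(w_0)-840\lambda w_0^3=-\tfrac{105\mu^2}{8w_0^4}\neq0$; hence no root of $h$ in $\C^*$ has multiplicity $\geq5$, i.e.\ $V_f$ has no singularity worse than $A_3$ in the torus. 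There are no singularities of $V_f$ on the toric boundary either: every edge of $\Delta_{\I}$ has lattice length $1$, so every edge-truncation of $f$ is a binomial with nonzero coefficients, whence $V_f$ is smooth along each open boundary divisor, and since $f$ is the monomial of any vertex $v$ of $\Delta_{\I}$ times a function nonvanishing at the corresponding torus-fixed point, $V_f$ does not pass through the torus-fixed points. Thus $V_f$ has no isolated singularity of Milnor number $>3$, the second assertion for $\Delta_{\I}$.

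For the existence assertion with $\Delta_{\I}$, fix $c_{07},c_{10},c_{20}\in\C^*$. Prescribing the Taylor data of $g$ at a point $w_0$ as above determines $g$ uniquely as a cubic, and $g(0)=-\tfrac{5\mu}{16}$; hence $g(0)=c_{10}$ forces $\mu=-\tfrac{16c_{10}}{5}$, and $\mu^2=4c_{20}c_{07}w_0^7$ becomes $w_0^{\,7}=\tfrac{64c_{10}^2}{25c_{20}c_{07}}$, which has seven nonzero solutions. For any such $w_0$ the resulting $g$ satisfies $g(w)=\mu\,\psi(w/w_0)$ with $\psi(s):=1+\tfrac72(s-1)+\tfrac{35}{8}(s-1)^2+\tfrac{35}{16}(s-1)^3$ (the degree-$3$ Taylor truncation of $s^{7/2}$ at $s=1$), whence $h(w)=\mu^2\bigl(\psi(w/w_0)^2-(w/w_0)^7\bigr)$; and one computes $256\bigl(\psi(s)^2-s^7\bigr)=-(s-1)^4\bigl(256(s-1)^3+567(s-1)^2+476(s-1)+140\bigr)$, where the cubic factor has nonvanishing discriminant and does not vanish at $s=1$. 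Therefore $h$ has a quadruple root at $w_0$ and three further simple roots, all nonzero and distinct from $w_0$ (the multiplicity at $w_0$ being exactly $4$ by the previous paragraph), so $V_f$ is $1$-tacnodal with tacnode in the torus, and (S2) holds automatically. For $\Delta_{\II}$ the existence half is parallel: one imposes conditions (1)--(3) of Lemma~\ref{tacnode} (with $z$ in the role of $x$) at a torus point on $V_f$, solves for the free coefficients $c_{21},c_{22},c_{14}$ with $c_{30},c_{20},c_{07}$ prescribed, and checks the non-degeneracy condition (4), so that the singularity is exactly $A_3$ and no other singularities appear (again using that all edges of $\Delta_{\II}$ have lattice length $1$).

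The main obstacle is the ``furthermore'' statement for $\Delta_{\II}$. There a torus singularity of $V_f$ can arise not only from a multiple root of $D$ (the hyperelliptic-type situation, handled as for $\Delta_{\I}$) but also from the fibre-cubic degenerating to a triple root at some $(\rho,w_0)$ with $w_0\neq0$; a priori the local equation there could have a Newton polygon giving an $E_6$- or worse singularity. One must show this does not occur: the triple-root conditions pin down $w_0$, $A(w_0)$, $\rho$ and leave only a line of admissible $(c_{21},c_{22})$, and along this line one checks that the transverse vanishing order of $f(\rho,\,\cdot\,)$ at $w_0$ is at most $2$ unless the mixed term $f_{zw}(\rho,w_0)$ is nonzero (producing a node) -- which amounts to showing a small system of linear conditions on $(c_{21},c_{22})$ is inconsistent -- and similarly bound the multiplicity of the roots of $D$ at which the fibre-cubic has only a double root. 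This bookkeeping with the second-order terms of the local equation, carried out for the precise shape $\Delta_{\II}$, is where the real computation lies.
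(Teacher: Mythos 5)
Your treatment of $\Delta_{\I}$ is correct and complete, and it is a genuinely different (and cleaner) route than the paper's: instead of eliminating through the system $f=f_x=f_y=\mathrm{Hess}(f)=K(f)=0$ of Lemma~\ref{tacnode}, you exploit lattice width $2$ to rewrite $V_f$ as the double cover $Z^2=h(w)$ with $h=g^2-4c_{20}c_{07}w^7$ of degree $7$, so that torus singularities are exactly $A_{m-1}$ at $m$-fold roots of $h$; your computation $h^{(4)}(w_0)=-\tfrac{105\mu^2}{8w_0^4}\neq 0$ (using $g^{(4)}\equiv 0$) rules out anything worse than $A_3$, and the factorization $256(\psi(s)^2-s^7)=-(s-1)^4(256(s-1)^3+567(s-1)^2+476(s-1)+140)$, whose cubic factor indeed has no multiple root, gives existence for arbitrary vertex coefficients (your $w_0^7=64c_{10}^2/25c_{20}c_{07}$ and $z_0=-\mu/2c_{20}$ reproduce the paper's $x=8/5$, $y^7=(8/5)^2$ after normalization).

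However, the case $i=\II$ is a genuine gap: you state yourself that the ``furthermore'' part there ``is where the real computation lies,'' and the existence part is likewise only described (``one imposes conditions (1)--(3)\dots solves\dots and checks (4)''), not carried out. The lemma asserts both halves for $\II$ as well, and the trigonal situation does not reduce to the degree-$7$ root-multiplicity bookkeeping that worked for $\I$: your claim that the $w$-discriminant $D(w)$ of $c_{30}z^3+A(w)z^2+c_{14}w^4z+c_{07}w^7$ again has degree $7$ is false (the term $-27c_{30}^2c_{07}^2w^{14}$ survives, so $\deg D=14$), and for a triple cover the singularity type at a point is not determined by the multiplicity of the corresponding root of $D$ alone (a triple root of the fibre cubic can even be a smooth point of $V_f$), which is exactly the case you leave open. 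The paper settles $\II$ by the same elimination it uses for $\I$: normalizing the vertex coefficients and solving $f=f_x=f_y=\mathrm{Hess}(f)=K(f)=0$ for $A,B,C$ and the singular point, which yields $(x,y)=(y_0^7,y_0)$ with $y_0^{14}+y_0^7+1=0$, and then verifying directly that each resulting curve has a single singular point and it is a tacnode; some such explicit computation (or a genuinely completed version of your triple-root analysis) is still needed before your proposal proves the stated lemma.
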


\begin{proof}
(I)\;
We first show that we can assume that 
the coefficients on the vertices of 
$\Delta_{\I}$ are $1$.  
We transform the polynomial 
\[
f=c_{10}x+c_{20}x^2+Axy +Bxy^2 +Cxy^3 +c_{07}y^7
\in \mathcal{F}(\Delta_{\I})
\]
by substituting $x=X^{-1}, y=Y$ and multiplying $X^2$. 
Then we get a new polynomial 
\[
\tilde{f}:=
c_{20}+c_{10}X+AXY+BXY^2+CXY^3+c_{07}X^2Y^7. 
\]
By multiplying suitable constants to the variables 
and the whole polynomial, 
we can assume that 
$c_{20}=c_{10}=c_{07}=1$. 
Transforming $\tilde{f}$ by $x=X^{-1}, y=Y$ again, 
we get 
\[
x+x^2+A'xy+B'xy^2+C'xy^3+y^7. 
\]
We re-denote this polynomial by $f$.

For a polynomial 
\[ 
f=x+x^2+Axy+Bxy^2+Cxy^3+y^7 \in \mathcal{F}(\Delta_{\I}), 
\]
we apply Lemma \ref{tacnode} and eliminate the variables 
by the system 
$f=f_x=f_y=\mathrm{Hess}(f)=K(f)=0$. 
First, 
by $f=0$, we can get $A$ as 
\[
A=-\frac{x+x^2+Bxy^2+Cxy^3+y^7}{xy}. 
\]
Therefore the system is reduced as 
\[
\begin{cases}
&\text{(1)}\;\; x^2-y^7                  =0, \\
&\text{(2)}\;\; -x-x^2+Bxy^2+2Cxy^3+6y^7 =0, \\
&\text{(3)}\:\; \text{ substituting $A$ for }\mathrm{Hess}(f)=0,  \\
&\text{(4)}\:\; \text{ substituting $A$ for }K(f)=0. 
\end{cases}
\]
Secondly, 
by equation (2), we can get $B$ as 
\[
B=\frac{x+x^2-2Cxy^3-6y^7}{xy^2}. 
\]
Then the system is reduced as
\[
\begin{cases}
&\text{(1')}\;\; 
x^2-y^7=0, \\
&\text{(3')}\;\; 
4x^3+4x^4+4Cx^3y^3+60x^2y^7-49y^{14}=0, \\
&\text{(4')}\;\;
2Cx^3+7xy^4+77x^2y^4+7Cxy^7-42y^{11}=0. 
\end{cases}
\]
Thirdly, by equation (3'), we can get $C$ as 
\[
C=\frac{
-4x^3-4x^4-60x^2y^7+49y^{14}}{4x^3y^3}.  
\]
Then the system is reduced as 
\[
\begin{cases}
& x^2-y^7=0, \\
& 
8x^5+8x^6-160x^4y^7+490x^2y^{14}-343y^{21}=0. 
\end{cases}
\]
Hence we obtain $x=8/5$ and the equation 
\[ \label{sol1} \tag{****}
y^7-(8/5)^2=0. 
\]

Next, we check that the above $f$ satisfies 
the condition (S1). 
Let $y_0, y_1,\ldots,y_6$ be the solutions of equation \eqref{sol1} 
and, for each $i=1, \ldots, 6$, 
let $f^{(i)}$ denote the polynomial $f$ with the solution $y=y_i$. 
By the above calculation, 
the curve $V_{f^{(i)}}$ defined by $f^{(i)}$ 
has a tacnode at $(8/5, y_i)\in (\C^*)^2$. 
Notice that the coefficients $A, B$ and $C$ of $f^{(i)}$ 
are determined by $x=8/5$ and $y=y_i$. 
Let $(s,t)$ be a singular point of $f^{(i)}$ on $V_{f^{(i)}}$. 
Solving $f^{(i)}_s=0$, we obtain $s=s_0(t,y_0)$. 
Set 
\[
f_1(t,y_0):=f^{(i)}(s_0(t,y_0),t), \;\;\;
f_2(t,y_0):=f^{(i)}_t(s_0(t,y_0),t). 
\]
Eliminating $y_0$ from $f_1, f_2$ by $y_0^7-(8/5)^2=0$, 
we obtain two equations with variable $t$. 
We can check that 
their greatest common divisor is $t^7-(5/8)^2$. 
Thus, the singularities of $f^{(i)}$ are only tacnodes. 

The coefficient $A$ of $f^{(i)}$ 
depends only on the solution $y_0$ of \eqref{sol1} 
and we can check directly that the coefficients $A$ for $y=y_i$
and $y=y_j$ are different if $i\ne j$. 
That is, the defining polynomials $f^{(i)}$
and $f^{(j)}$ are different for $i\ne j$. 
Therefore each $f^{(i)}$ satisfies the condition (S1).

\vspace{2mm}
\noindent
(II)\; 
For the polynomial 
\[ 
f=x^2+x^3+Ax^2y+Bx^2y^2+Cxy^4+y^7 \in \mathcal{F}(\Delta_{\II}), 
\]
we apply Lemma \ref{tacnode} 
and eliminate the variables by the system 
$f=f_x=f_y=\mathrm{Hess}(f)=K(f)=0$. 
First, 
by $f=0$, we can get $A$ as 
\[
A=-\frac{x^2+x^3+Bx^2y^2+Cxy^4+y^7}{x^2y}. 
\]
Therefore the system is reduced as 
\[
\begin{cases}
&\text{(1)}\;\; x^3-Cxy^4-2y^7                =0, \\
&\text{(2)}\;\; -x^2-x^3+Bx^2y^2+3Cxy^4+6y^7  =0, \\
&\text{(3)}\:\; \text{ substituting $A$ for }\mathrm{Hess}(f)=0,  \\
&\text{(4)}\:\; \text{ substituting $A$ for }K(f)=0. 
\end{cases}
\]
Secondly, 
by equation (2), we can get $B$ as 
\[
B=\frac{x^2+x^3-3Cxy^4-6y^7}{x^2y^2}. 
\]
Then the system is reduced as
\[
\begin{cases}
&\text{(1')}\;\;  
x^3-Cxy^4-2y^7 =0, \\
&\text{(3')}\;\; 
8x^5+8x^6-4Cx^3y^4+20Cx^4y^4-4x^2y^7
+116x^3y^7-28C^2x^2y^8-184Cxy^{11}-256y^{14}=0, \\
&\text{(4')}\;\;
\text{ substituting $B$ for (4) }=0. 
\end{cases}
\]
Thirdly, by equation (1'), we can get $C$ as 
\[
C=\frac{x^3-2y^7}{xy^4}.  
\]
Then the system is reduced as 
\[
\begin{cases}
& 
x^3+y^7+xy^7 =0, \\
& 
4x^9+14x^6y^7+5x^7y^7+16x^3y^{14}+11x^4y^{14}
+6y^{21}+7xy^{21}=0. 
\end{cases}
\]
By direct computation, we can see that the solution of the above system is 
\[\tag{*****}\label{sol2}
(x,y)=(y_0^7,y_0),
\]
where $y_0$ is a solution of $y^{14}+y^7+1=0$. 

Next, we check that the above $f$ satisfies 
the condition (S1). 
Notice that the curve $V_f$ defined by $f$ 
has a tacnode at 
$(y_0^7, y_0) \in (\C^*)^2$, 
where $y_0$ is a solution of \eqref{sol2}. 
Let $(s,t) \in (\C^*)^2$
be a singular point of $V_f$. 
Then, we can easily check that 
the system 
$f(s,t)=f_x(s,t)=f_y(s,t)=y_0^{14}+y_0^7+1=0$ 
implies 
$t=y$. 
After substituting $y=t$ for $f(s,t), f_x(s,t), f_y(s,t)$, 
we obtain 
$s-y_0^7$ 
as their greatest common divisor. 
That is, the singularities of $V_f$ are only tacnodes. 
Moreover, we can easily check that for two different solutions $y_0$ and 
$y_0'$ of $y^{14}+y^7+1=0$, 
the triples $(A,B,C)$ of the coefficients of the polynomial $f$, 
which are determined by $y_0$ and $y_0'$, are different.
Therefore, for each solution of $y^{14}+y^7+1=0$, 
the polynomial $f$ satisfies the condition (S1).
\end{proof}

\begin{lem}\label{tacpiece2}\it
For each $i=\VI, \VII, \VIII, \IX$, 
and given coefficients $c_{ij}$ on the vertices 
$(i,j) \in V(P)$, 
there is a polynomial 
$f \in \mathcal{F}(\Delta_i)$
which has the fixed coefficients on the vertices 
and satisfies (S1) and (S2) 
if and only if 
\begin{align*}
\bullet \; &c_{03}c_{20}=64c_{10}c_{13}, 
\;\;\; \text{if} \;\;\; i=\VI, \\
\bullet \; 
&c_{21}c_{00}^2=-4c_{01}c_{10}^2, \;\; \text{and}\;\; 
c_{12}c_{00}^2=-4c_{10}c_{01}^2,
\;\;\; \text{if}\;\;\;  i=\VII, \\
\bullet \; &8^6c_{33}c_{00}^5=5^5c_{10}^3c_{01}^3,  
\;\;\; \text{if}\;\;\;  i=\VIII, \\
\bullet \; &256c_{42}c_{00}^5=(41+38\sqrt{-1})c_{10}^4c_{01}^2, 
\;\; \text{or} \;\; 
256c_{42}c_{00}^5=(41-38\sqrt{-1})c_{10}^4c_{01}^2,
\;\;\; \text{if}\;\;\;  i=\IX. 
\end{align*}

Furthermore, 
there is no polynomial 
$f\in \mathcal{F}(\Delta_i)$ 
that defines a curve with 
more complicated singularity than $A_3$. 
\end{lem}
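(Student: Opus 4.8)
The plan is to repeat the elimination scheme used to prove Lemma~\ref{tacpiece}; the new feature is that for $\Delta_{\VI},\dots,\Delta_{\IX}$ the system coming from Lemma~\ref{tacnode} is over-determined, and the displayed relations among the vertex coefficients are exactly its consistency conditions. Fix $i\in\{\VI,\VII,\VIII,\IX\}$. Each $\Delta_i$ has six lattice points, one (if $i=\VII$) or two (if $i=\VI,\VIII,\IX$) of which are interior. First I would normalize: using the toric automorphisms $x\mapsto\lambda x$, $y\mapsto\mu y$ of the maximal torus together with the scaling $f\mapsto\nu f$, one may set three of the vertex coefficients equal to $1$, and the stated relations are precisely the statements that the corresponding gauge-invariant monomials in the vertex coefficients (for instance $c_{03}c_{20}/(c_{10}c_{13})$ when $i=\VI$) take the prescribed values. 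Since every edge of each $\Delta_i$ has lattice length $1$, the truncation of $f$ on an edge is a binomial with nonzero coefficients, so $V_f$ meets each component of $X(\partial\Delta_i)$ transversally at one point; hence (S2) holds automatically and $V_f$ is smooth along the toric boundary.

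Next I would solve the singularity equations. Let $p=(x_0,y_0)$ be a prospective singular point in the maximal torus. If $V_f$ has a tacnode at $p$ then its quadratic part is the square of a linear form, so $f_{xx}(p)$ and $f_{yy}(p)$ cannot both vanish; as each $\Delta_i$ is $\mathrm{Aff}(\Z^2)$-equivalent to its image under $(x,y)\mapsto(y,x)$, we may assume $f_{xx}(p)\neq 0$ and apply Lemma~\ref{tacnode}. Conditions (1)--(3) there become the five polynomial equations
\[
f(p)=f_x(p)=f_y(p)=\mathrm{Hess}(f)(p)=K(f)(p)=0
\]
in $x_0,y_0$ and the one or two free interior coefficients. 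I would eliminate the free coefficients one at a time from the equations in which they occur linearly (starting with $f(p)=0$, then $f_x(p)=0$ and $f_y(p)=0$), substitute, and then eliminate $x_0,y_0$ by resultants. The outcome should be that the system is consistent if and only if the normalized vertex data satisfies the stated relation, and in that case $x_0$ and $y_0$ are cut out by explicit algebraic equations all of whose roots are nonzero, so $p$ lies in the torus. For each such solution a direct substitution checks that condition (4) of Lemma~\ref{tacnode} holds, so $p$ is an $A_3$ point; combined with the transversality along $X(\partial\Delta_i)$ and with the fact that $f=f_x=f_y=0$ has $p$ as its only torus solution, this gives (S1). The converse direction of the equivalence is the contrapositive: if the relation fails the system is inconsistent, so no such $f$ has a tacnode in the torus.

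For the last assertion, a singular point of Milnor number $\ge 4$ would, after possibly exchanging $x$ and $y$, satisfy conditions (1)--(3) of Lemma~\ref{tacnode} while violating (4), or else have multiplicity $\ge 3$. I would rule out the first possibility by checking that the five equations above together with $a_{12}(p)^2-4f_{xx}(p)a_{04}(p)=0$ have no solution with $x_0y_0\neq 0$ for the normalized coefficients, and the second by a genus count using that a curve in $X(\Delta_i)$ has arithmetic genus $\sharp\mathrm{Int}(\Delta_i)_{\Z}\le 2$ (together with an inspection of how $\Delta_i$ can decompose as a Minkowski sum, to cover reducible curves); since $V_f$ is also smooth along the toric boundary, this shows no $f\in\mathcal{F}(\Delta_i)$ defines a curve with a singularity worse than $A_3$.

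The main obstacle is the resultant elimination itself, which has to be carried out separately for each of the four polytopes with the vertex coefficients retained as parameters. The delicate points are to discard the spurious parts of the solution set — those with $x_0y_0=0$, where the singular point escapes to the toric boundary, and those with $f_{xx}(p)=0$ — so that the consistency condition actually extracted coincides with the displayed relation and not a weaker one, and to confirm that on the surviving locus $V_f$ carries exactly one $A_3$ point and nothing more degenerate.
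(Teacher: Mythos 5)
Your proposal is correct and follows essentially the same route as the paper: normalize by the torus action and overall scaling so that the vertex data reduce to gauge-invariant parameters, apply Lemma~\ref{tacnode}, eliminate the free interior coefficients from the system $f=f_x=f_y=\mathrm{Hess}(f)=K(f)=0$, identify the resulting consistency conditions with the displayed relations, and then verify that the singular locus is a single tacnode. Your extra care about the hypothesis $f_{xx}(p)\neq 0$ and about singularities of multiplicity $\ge 3$ (via the interior-lattice-point genus bound) only makes explicit points the paper treats implicitly, so it is a refinement rather than a different argument.
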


\begin{proof}
(VI)\;
We transform the polynomial 
\[
f:=c_{10}x+c_{20}x^2+Axy+Bxy^2+c_{03}y^3+c_{13}xy^3
\in \mathcal{F}(\Delta_{\VI})
\]
by substituting $x=X^{-1}, y=Y$ and multiplying $X^2$. 
Then we get the new polynomial 
\[
\tilde{f}:=
c_{10}X+c_{20}+A'XY+B'XY^2+c_{03}X^2Y^3+c_{13}XY^3. 
\]
By multiplying suitable constants to the variables 
and the whole polynomial, 
we can rewrite $\tilde{f}$ as 
\[
1+X+A''XY+B''XY^2+XY^3+CX^2Y^3, 
\]
where 
\[
C=\frac{c_{03}c_{20}}{c_{10}c_{13}}. 
\]  

For the polynomial 
\[
1+x+Axy+Bxy^2+xy^3+Cx^2y^3
\]
we apply Lemma \ref{tacnode} 
and eliminate the variables by the system 
$f=f_x=f_y=\mathrm{Hess}(f)=K(f)=0$. 
First, 
by $f=0$, we can get $A$ as 
\[
A=-\frac{1+x+Bxy^2+xy^3+Cx^2y^3}{xy}. 
\]
Therefore the system is reduced as 
\[
\begin{cases}
&\text{(1)}\;\; -1+Cx^2y^3                 =0, \\
&\text{(2)}\;\; -1-x+Bxy^2+2xy^3+2Cx^2y^3  =0, \\
&\text{(3)}\:\; \text{ substituting $A$ for }\mathrm{Hess}(f)=0,  \\
&\text{(4)}\:\; \text{ substituting $A$ for }K(f)=0. 
\end{cases}
\]
Secondly, 
by equation (1), we can get $C$ as 
\[
C=\frac{1}{x^2y^3}. 
\]
Then the system is reduced as
\[
\begin{cases}
&\text{(2')}\;\; 
1-x+Bxy^2+2xy^3 =0, \\
&\text{(3')}\;\; 
-4 +8x -x^2 -4Bxy^2 
+2Bx^2y^2 -4xy^3 +4x^2y^3 
-B^2x^2y^4 -4Bx^2y^5 -4x^2y^6 =0, \\
&\text{(4')}\;\;
48 -144x +36x^2 
+48 Bxy^2 +48xy^3 -48Bx^2y^2 
-72x^2y^3 +12B^2x^2y^4 +24Bx^2y^5 =0. 
\end{cases}
\]
Thirdly, by equation (2'), we can get $B$ as 
\[
B=-\frac{1-x+2xy^3}{xy^2}.  
\]
Then the system is reduced as 
\[
\begin{cases}
& 4x+4xy^3-1  =0, \\
& 6x+2xy^3-1  =0. 
\end{cases}
\]
The solution of the above system is 
\[\tag{******}\label{sol3}
(x,y)=(1/8,y_0),
\]
where $y_0$ is a solution of $y^3=1$. 
Then we obtain 
\[
A=-9/y_0, \;\;
B=-9/y_0^2, \;\;
C=1/x^2y^3=64. 
\]

Next, we check that the above $f$ satisfies the condition (S1). 
Notice that the curve $V_f$ defined by $f$ 
has a tacnode at $(1/8, y_0) \in (\C^*)^2$, 
where $y_0$ is a solution of \eqref{sol3}. 
Let $(s,t) \in (\C^*)^2$ be a singular point of $V_f$. 
Then, we obtain 
$t^3-1=0$ and $s=1/8$
from the system 
$f(s,t)=f_x(s,t)=f_y(s,t)=0$ 
and the equation 
$y_0^3-1=0$. 
That is, the singularities of $V_f$ are only tacnodes. 
Moreover, we can easily check that for two different solutions 
$y_0$ and $y_0'$ of $y^3-1=0$, 
the triples $(A,B,C)$ of the coefficients of the polynomial $f$, 
which are determined by $y_0$ and $y_0'$, are different.
Therefore, for each solution of $y^3-1=0$, 
the polynomial $f$ satisfies the condition (S1). 

\vspace{2mm}
\noindent
(VII)\;
We can rewrite the polynomial 
\[
f=c_{00}+c_{10}x+c_{01}y+Axy+c_{21}x^2y+c_{12}xy^2
\in \mathcal{F}(\Delta_{\VII})
\]
as 
\[
f=1+x+y+Axy+Bx^2y+Cxy^2
\]
by the same manner as above, where 
\[
B=\frac{c_{21}c_{00}^2}{c_{01}c_{10}^2}, \;\;\;
C=\frac{c_{12}c_{00}^2}{c_{10}c_{01}^2}. 
\]
For the polynomial 
\[
f=1+x+y+Axy+Bx^2y+Cxy^2, 
\]
we apply Lemma \ref{tacnode} 
and eliminate the variables by the system 
$f=f_x=f_y=\mathrm{Hess}(f)=K(f)=0$. 
First, 
by $f=0$, we can get $A$ as 
\[
A=-\frac{1+x+y+Bx^2y+Cxy^2}{xy}. 
\]
Therefore the system is reduced as 
\[
\begin{cases}
&\text{(1)}\;\; -1-y+Bx^2y         =0, \\
&\text{(2)}\;\; -1-x+Cxy^2         =0, \\
&\text{(3)}\:\; \text{ substituting $A$ for }\mathrm{Hess}(f)=0,  \\
&\text{(4)}\:\; \text{ substituting $A$ for }K(f)=0. 
\end{cases}
\]
Secondly, 
by equations (1) and (2), we can get $B$ and $C$ as 
\[
B=\frac{1+y}{x^2y},\quad
C=\frac{1+x}{xy^2}, 
\]
respectively. 
Then the system is reduced as
\[
\begin{cases}
&\text{(3')}\;\;  
3+4x+4y+4xy      =0, \\
&\text{(4')}\;\; 
(1+y)^2(1+2x)    =0. 
\end{cases}
\]
The solution of the above system is 
\[
(x,y)=(-1/2,-1/2),  
\]
and we obtain 
\[
A=B=C=-4. 
\]

Next, we check that the above $f$ satisfies the condition (S1). 
Notice that the curve $V_f$ defined by $f$ 
has a tacnode at $(-1/2, -1/2) \in (\C^*)^2$. 
Let $(s,t) \in (\C^*)^2$ be a singular point of $V_f$. 
Then, we can solve 
$f(s,t)=f_x(s,t)=f_y(s,t)=0$, 
and the solution is $(s,t)=(-1/2, -1/2)$. 
That is, the singularity of $f$ is only one point and is a tacnode. 
Therefore the $f$ satisfies the condition (S1). 

\vspace{2mm}
\noindent
(VIII)\;
We can rewrite the polynomial 
\[
f=c_{00}+c_{10}x+c_{01}y +Axy+Bx^2y^2+c_{33}x^3y^3
\in \mathcal{F}(\Delta_{\VIII})
\]
as
\[
f=1+x+y+Axy+Bx^2y^2+Cx^3y^3 
\]
by the same manner as above, where 
\[
C=\frac{c_{33}c_{00}^5}{c_{10}^3c_{01}^3}. 
\]

For the polynomial 
\[
f=1+x+y+Axy+Bx^2y^2+Cx^3y^3, 
\]
we apply Lemma \ref{tacnode} 
and eliminate the variables by the system 
$f=f_x=f_y=\mathrm{Hess}(f)=K(f)=0$. 
First, 
by $f=0$, we can get $A$ as 
\[
A=-\frac{1+x+y+Bx^2y^2+Cx^3y^3}{xy}. 
\]
Therefore the system is reduced as 
\[
\begin{cases}
&\text{(1)}\;\; -1-y+Bx^2y^2+2Cx^3y^3         =0, \\
&\text{(2)}\;\; -1-x+Bx^2y^2+2Cx^3y^3         =0, \\
&\text{(3)}\:\; \text{ substituting $A$ for }\mathrm{Hess}(f)=0,  \\
&\text{(4)}\:\; \text{ substituting $A$ for }K(f)=0. 
\end{cases}
\]
Secondly, 
by equation (1), we can get $B$ as 
\[
B=\frac{1+y-2Cx^3y^3}{x^2y^2}. 
\]
Then the system is reduced as
\[
\begin{cases}
&\text{(2')}\;\;  x-y            =0, \\
&\text{(3')}\;\; 4-x+4y+4Cx^3y^3=0, \\
&\text{(4')}\;\;
\text{ substituting $B$ for (4) }=0. 
\end{cases}
\]
Thirdly, by equation (3'), we can get $C$ as 
\[
C=\frac{-4+x-4y}{4x^3y^3}.  
\]
Then the system is reduced as 
\[
\begin{cases}
& x-y         =0, \\
& -8 +3x-8y     =0. 
\end{cases}
\]
The solution of the above system is 
\[
(x,y)=(-8/5,-8/5), 
\] 
and we also obtain 
\[
A=75/64, \;\;
B=-5^4/2^{12}, \;\;
C=5^5/8^6. 
\]

Next, we check that the above $f$ satisfies the condition (S1). 
Notice that the curve $V_f$ defined by $f$ 
has a tacnode at $(-8/5, -8/5) \in (\C^*)^2$. 
Let $(s,t) \in (\C^*)^2$ be a singular point of $V_f$. 
Then, we can solve 
$f(s,t)=f_x(s,t)=f_y(s,t)=0$, 
and the solution is $(s,t)=(-8/5, -8/5)$. 
That is, the singularity of $f$ is only one point and is a tacnode. 
Therefore the $f$ satisfies the condition (S1). 

\vspace{2mm}
\noindent
(IX)\;
We can rewrite the polynomial 
\[
f=c_{00}+c_{10}x+c_{01}y +Axy+Bx^2y+c_{42}x^4y^2
\in \mathcal{F}(\Delta_{\IX})
\]
as
\[
f=1+x+y+Axy+Bx^2y+Cx^4y^2 
\]
by the same manner as above, where 
\[
C=\frac{c_{42}c_{00}^5}{c_{10}^4c_{01}^2}. 
\]

For the polynomial 
\[
f=1+x+y+Axy+Bx^2y+Cx^4y^2, 
\]
we apply Lemma \ref{tacnode} 
and eliminate the variables by the system 
$f=f_x=f_y=\mathrm{Hess}(f)=K(f)=0$. 
First, 
by $f=0$, we can get $A$ as 
\[
A=-\frac{1+x+y+Bx^2y+Cx^4y^2}{xy}. 
\]
Therefore the system is reduced as 
\[
\begin{cases}
&\text{(1)}\;\; -1-y+Bx^2y +3Cx^4y^2        =0, \\
&\text{(2)}\;\; -1-x+Cx^4y^2         =0, \\
&\text{(3)}\:\; \text{ substituting $A$ for }\mathrm{Hess}(f)=0,  \\
&\text{(4)}\:\; \text{ substituting $A$ for }K(f)=0. 
\end{cases}
\]
Secondly, 
by equations (1), we can get $B$ as 
\[
B=\frac{1+y-3Cx^4y^2}{x^2y}. 
\] 
Then the system is reduced as
\[
\begin{cases}
&\text{(2')}\;\;  
-1-x+Cx^4y^2     =0, \\
&\text{(3')}\;\; 
1 -4Cx^2y^2 -8Cx^3y^2
-4Cx^2y^3 +4C^2x^6y^4   =0, \\
&\text{(4')}
\text{ substituting $B$ for (4) }=0.
\end{cases}
\]
Thirdly, by equations (2'), we can get $C$ as 
\[
C=\frac{1+x}{x^4y^2}. 
\]
Then the system is reduced as 
\[
\begin{cases}
&
4x +4y +3x^2 +4xy =0, \\
&
(4+3x)
(
16x +8y
+24x^2 +22xy +4y^2
+9x^3 +12x^2y +5xy^2
) =0.
\end{cases}
\]
The solutions of the above system are 
\[
(x_0,y_0)
=
\Bigl(
-\frac{6}{5}+\frac{2}{5}\sqrt{-1}, \frac{2}{5}-\frac{4}{5}\sqrt{-1}
\Bigr), 
\;\;
(x_1,y_1)
=
\Bigl(
-\frac{6}{5}-\frac{2}{5}\sqrt{-1}, \frac{2}{5}+\frac{4}{5}\sqrt{-1}
\Bigr), 
\]
and we obtain 
\[
C=-\frac{41}{256}+\frac{19}{128}\sqrt{-1}
\;\;\; \text{if} \;\;\; 
x= x_0, \;\;\;
C=\frac{41}{256}+\frac{19}{128}\sqrt{-1}
\;\;\; \text{if}\;\;\;
x= x_1. 
\]

Next, we check that the above $f$ satisfies the condition (S1). 
Notice that the curve $V_f$ defined by $f$ 
has a tacnode at $(x_0, y_0) \in (\C^*)^2$. 
Let $(s,t) \in (\C^*)^2$ be a singular point of $V_f$. 
Then, we can solve 
$f(s,t)=f_x(s,t)=f_y(s,t)=0$, 
and the solution is $(s,t)=(x_0, y_0)$. 
That is, the singularity of $f$ is only one point and is a tacnode. 
Therefore the $f$ satisfies the condition (S1). 
Also, 
we can check the condition (S1) for $(x_1,y_1)$ 
by the same manner. 
\end{proof}

\begin{lem}\label{singedge}\it
For each $i=\III, \IV, \V$ 
and given coefficients 
$c_{ij}$ on the vertices $(i,j) \in V(P)$, 
there is a polynomial 
$f \in \mathcal{F}(\Delta_i)$
which has the fixed coefficients on the vertices 
such that $f$ defines a curve which has 
\begin{itemize}
\item[(III)]
an $A_2$-singularity on the toric divisor  
corresponding to the edge of length $2$,  
\item[(IV)]
an $A_1$-singularity on the toric divisor 
corresponding to the edge of length $2$, 
\item[(V)]
an intersection with the toric divisor 
corresponding to the edge of length $4$
whose multiplicity is $4$. 
\end{itemize}
\end{lem}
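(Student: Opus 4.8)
The plan is to handle the three cases separately; in each, I write out the general member $f\in\mathcal F(\Delta_i)$, keep the (nonzero) coefficients on the vertices of $\Delta_i$ fixed, and choose the remaining coefficients so that the truncation of $f$ on the long edge becomes a perfect power. In the standard coordinates in which $\Delta_{\III},\Delta_{\IV},\Delta_{\V}$ are written, the edge of lattice length $m$ under consideration is the bottom edge on the $x$-axis, its toric divisor $D$ is the axis $\{y=0\}$, and the truncation $f^{\sigma}$ is just $f(x,0)$; moreover $(a,0)\in D$ lies in the torus of $D$ precisely when $a\neq0$, which will be automatic since the constant and leading coefficients of $f(x,0)$ are nonzero vertex coefficients.

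For case $(\III)$ I would take $f=c_{00}+c_{10}x+c_{20}x^2+c_{11}xy+c_{12}xy^2+c_{13}xy^3$, the lattice points of $\Delta_{\III}$ being its vertices $(0,0),(2,0),(1,3)$ together with $(1,0),(1,1),(1,2)$; so $c_{00},c_{20},c_{13}$ are prescribed and $c_{10},c_{11},c_{12}$ are free. For $V_f$ to be singular at a torus point $(a,0)$ of $D$ one is forced to take $f(x,0)=c_{20}(x-a)^2$, that is $a^2=c_{00}/c_{20}$ (either root, so $a\neq0$) and $c_{10}=-2ac_{20}$, and then $f_y(a,0)=c_{11}a$ forces $c_{11}=0$. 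With these choices the local equation at $(a,0)$, after $x=a+u$, $y=v$, is $c_{20}u^2+c_{12}(a+u)v^2+c_{13}(a+u)v^3$, whose $2$-jet $c_{20}u^2+c_{12}av^2$ is a nondegenerate quadratic form when $c_{12}\neq0$: that produces merely a node. Hence I also set $c_{12}=0$, so the local equation becomes $c_{20}u^2+c_{13}(a+u)v^3$, whose Newton polygon is the single segment from $(2,0)$ to $(0,3)$ with face polynomial $c_{20}u^2+c_{13}av^3$ having no critical point in the torus; by the Newton-diagram criterion \cite{Kou} the singularity is of type $A_2$. The remaining support $\{(0,0),(1,0),(2,0),(1,3)\}$ still spans $\Delta_{\III}$, so $f\in\mathcal F(\Delta_{\III})$.

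Cases $(\IV)$ and $(\V)$ are shorter variants. For $(\IV)$, write $f=c_{00}+c_{10}x+c_{20}x^2+c_{11}xy+c_{12}xy^2$ with vertices $(0,0),(2,0),(1,2)$; imposing $f(x,0)=c_{20}(x-a)^2$ and $c_{11}=0$ makes $V_f$ singular at $(a,0)$ with local equation $c_{20}u^2+c_{12}(a+u)v^2$, whose $2$-jet $c_{20}u^2+c_{12}av^2$ is nondegenerate because $c_{20}c_{12}a\neq0$, so the singularity is an ordinary node $A_1$; here $c_{12}$ is a fixed vertex coefficient, so nothing further is needed. For $(\V)$, write $f=\sum_{k=0}^{4}c_{k0}x^k+c_{01}y$ with vertices $(0,0),(4,0),(0,1)$; imposing $\sum_{k=0}^{4}c_{k0}x^k=c_{40}(x-a)^4$ determines $a^4=c_{00}/c_{40}$ and $c_{10},c_{20},c_{30}$, and then $f^{\sigma}$ has a root of multiplicity $4$ at $x=a\neq0$, so $V_f$ meets $\{y=0\}$ with multiplicity $4$ at $(a,0)$ (in fact $V_f$ is smooth there, since $f_y=c_{01}\neq0$). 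In each case the support of $f$ still spans $\Delta_i$.

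The only delicate point is the one flagged in case $(\III)$: making $V_f$ singular on the divisor by itself yields a node rather than a cusp, and one has to notice that killing the coefficient of the interior monomial $xy^2$ is exactly what brings the Newton polygon of the local equation down to that of $u^2+v^3$. Everything else is routine bookkeeping with Pick's formula and with the Newton-diagram description of the $A_k$-singularities, and I do not anticipate any further obstacle.
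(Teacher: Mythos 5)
Your proposal is correct and takes essentially the same route as the paper: in each case one forces the truncation of $f$ on the long edge to be a perfect square (resp.\ fourth power) and kills the remaining low-order coefficients, arriving at the same polynomials $c_{20}(x-a)^2+c_{13}xy^3$, $c_{20}(x-a)^2+c_{12}xy^2$, $c_{40}(x-a)^4+c_{01}y$ (the paper simply normalizes the vertex coefficients to $1$ first). Your Newton-diagram verification that the singularity in case $(\III)$ is exactly $A_2$ is a slightly more explicit check than the paper's use of the condition $\mathrm{Hess}(f)=0$, but the construction is the same.
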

\begin{proof}
(III)\; 
We set 
\[
f:=1+Ax+x^2+Bxy+Cxy^2+xy^3 \in \mathcal{F}(\Delta_{\III}). 
\]
Let 
$\sigma \subset \Delta_{\III}$ 
be the edge of length $2$. 
The intersection point of $X(\sigma)$ 
and the curve defined by $f$ 
is an $A_2$-singularity and this implies $A=\pm 2$. 

We assume $A=2$ and the singularity is at $(-1,0)$. 
For 
$f=(1+x)^2+Bxy+Cxy^2+y^3$, 
the solution of
$f(-1,0)=f_x(-1,0)=f_y(-1,0)=\mathrm{Hess}(f)(-1,0)=0$ 
is $B=C=0$. 
Therefore we obtain the polynomial 
$f:=1+2x+x^2+xy^3 \in \mathcal{F}(\Delta_{\III})$. 

\vspace{2mm}
\noindent
(IV)\;
We set 
\[
f:= 1+Ax+x^2+Bxy+xy^2 
\in \mathcal{F}(\Delta_{\IV}). 
\]
Let 
$\sigma \subset \Delta_{\IV}$ 
be the edge of length $2$. 
The intersection point of $X(\sigma)$ 
and the curve defined by $f$ 
is an $A_1$-singularity and this implies $A=\pm 2$. 

We assume $A=2$ and the singularity is at $(-1,0)$. 
For 
$f=(1+x)^2+Bxy+xy^2$, 
the solution of
$f(-1,0)=f_x(-1,0)=f_y(-1,0)=0$ 
is $B=0$. 
Therefore we obtain the polynomial 
$f:=1+2x+x^2+xy^2 \in \mathcal{F}(\Delta_{\IV})$. 

\vspace{2mm}
\noindent
(V)\;
We can prove that the polynomial 
\[
f:=(1 \pm x)^4+y \in \mathcal{F}(\Delta_{\V})
\]
satisfies the condition.  
\end{proof}

Set
\begin{align*} 
&\Hat{\Delta}_{\III}:= \mathrm{Conv}\{ (0,-1), (2,0), (0,3) \}, \\
&\Hat{\Delta}_{\IV} := \mathrm{Conv}\{ (0,-2), (2,0), (0,2) \}, \\
&\Hat{\Delta}_{\V}  := \mathrm{Conv}\{ (0,-1), (4,0), (0,1) \}. 
\end{align*}

\begin{figure}[htbp]
\includegraphics[scale=0.6]{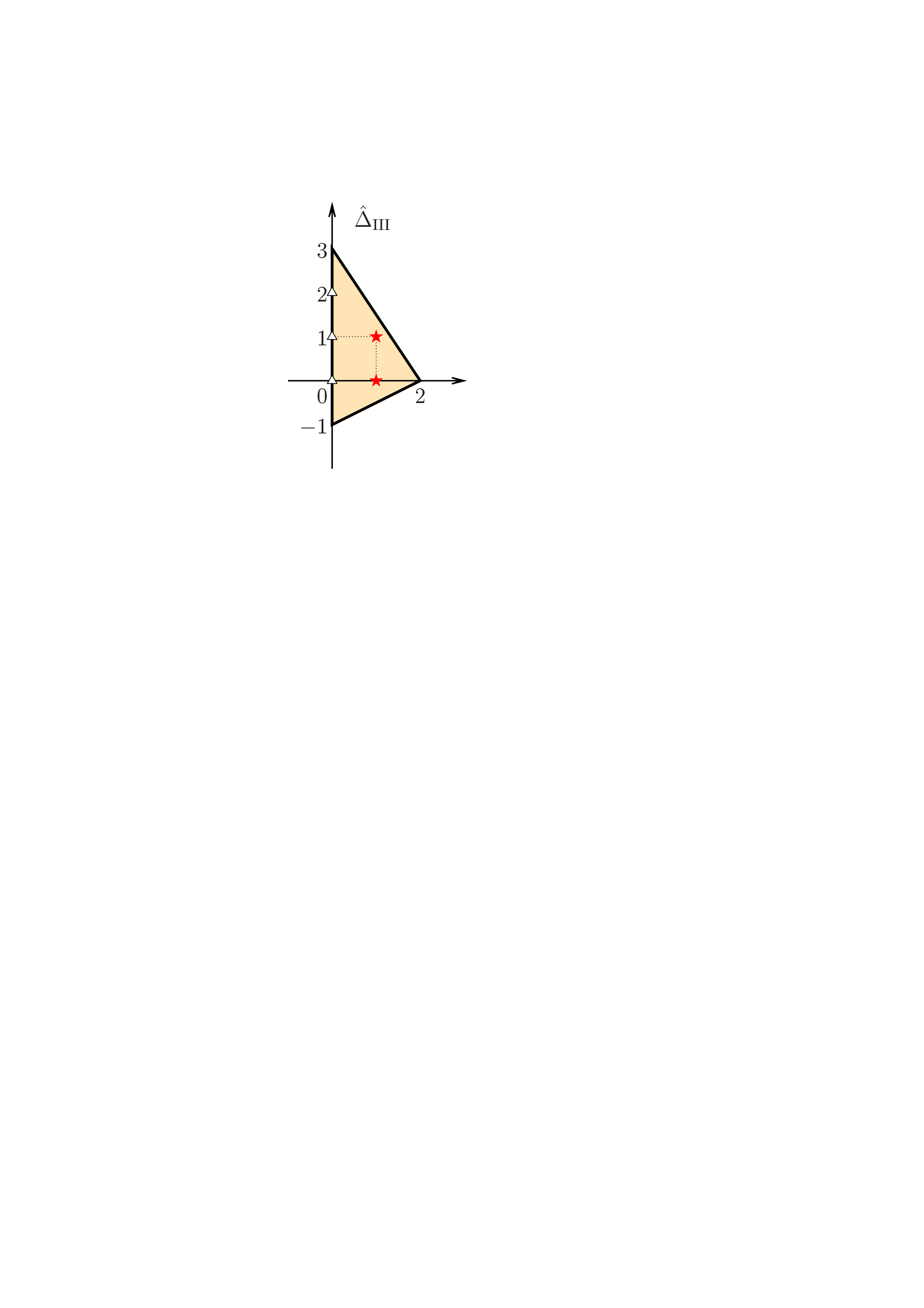}
\includegraphics[scale=0.6]{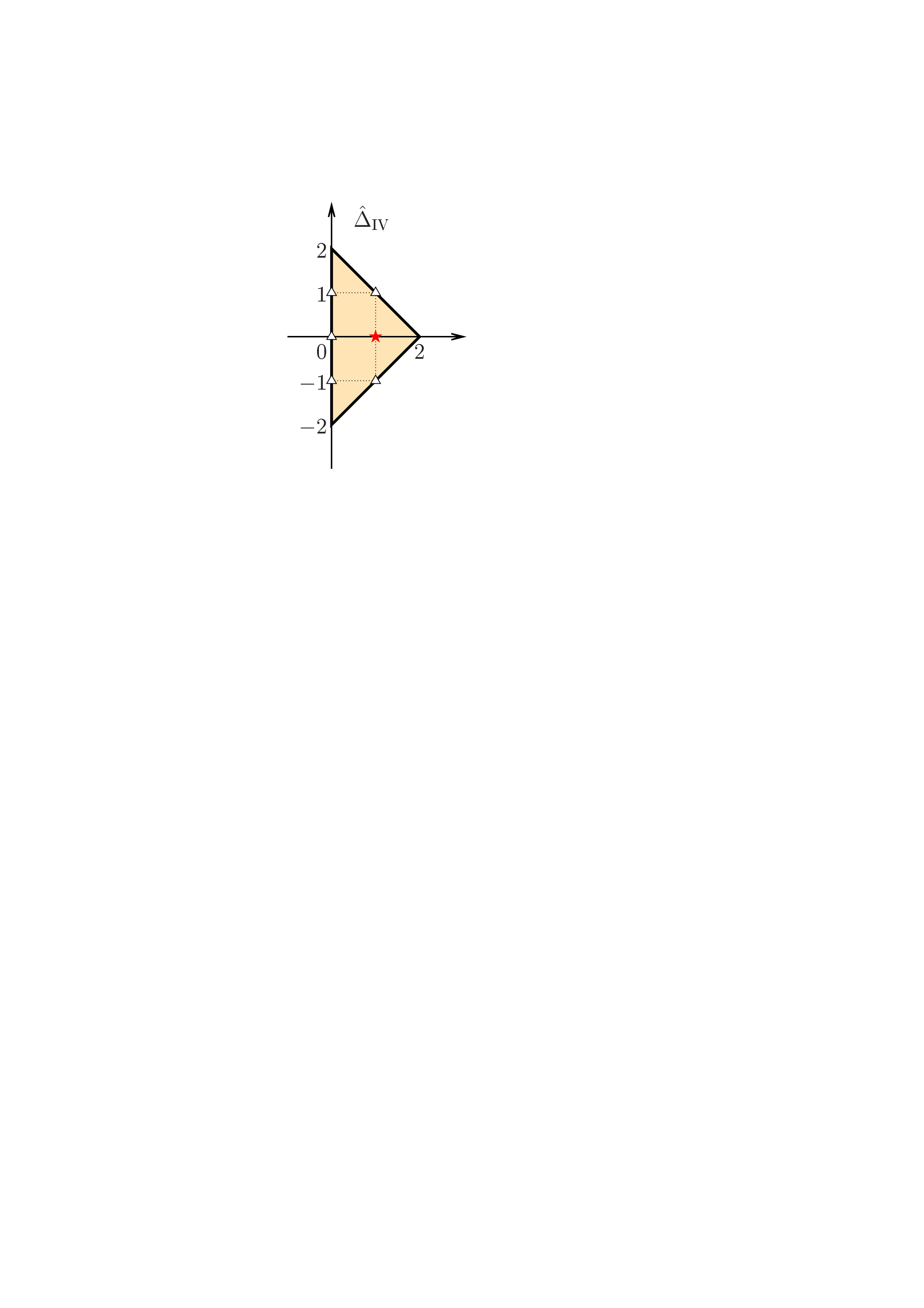}
\includegraphics[scale=0.6]{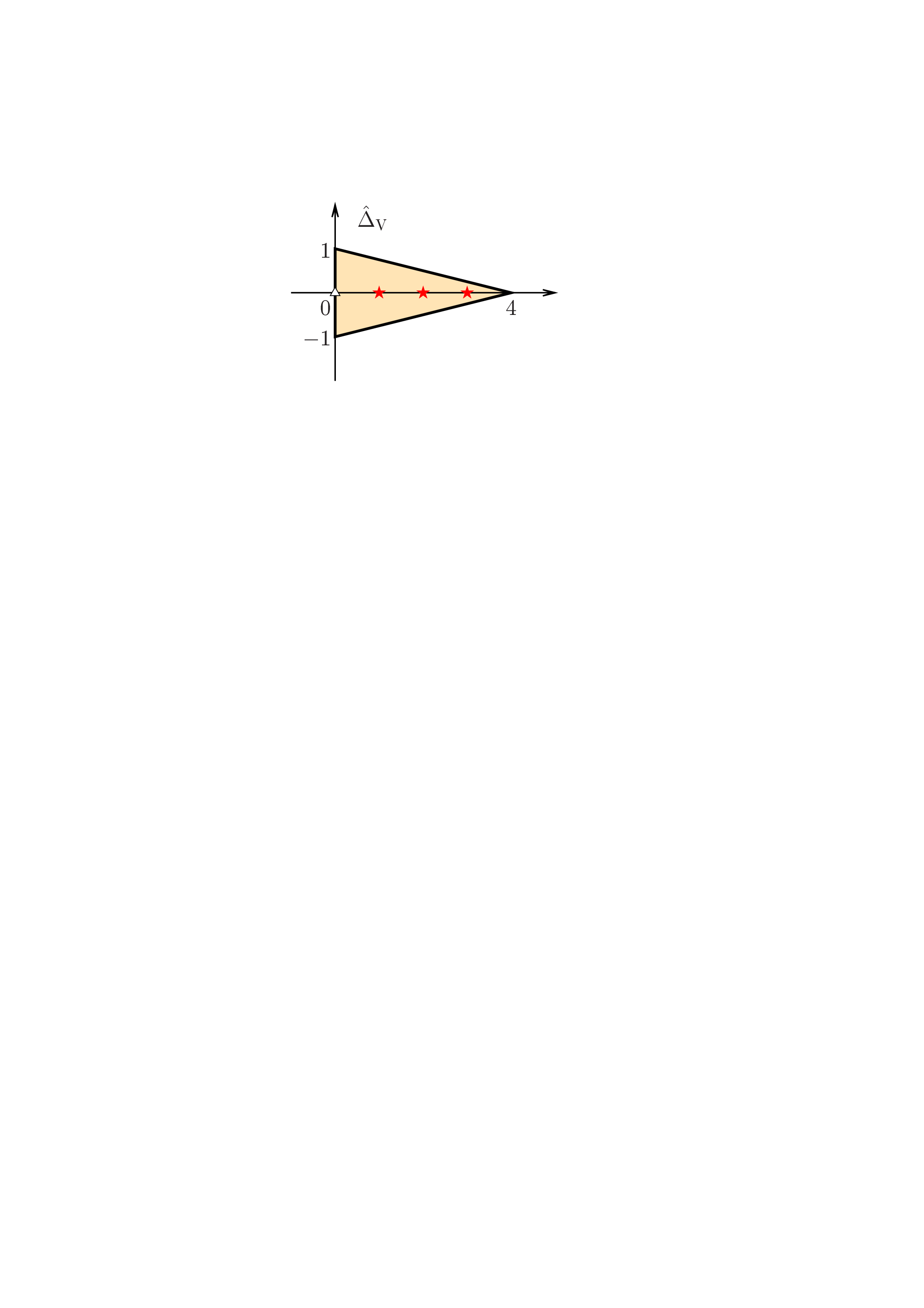}

\caption{Polytopes $\Hat{\Delta}_{\III}, \Hat{\Delta}_{\IV}$ and  
$\Hat{\Delta}_{\V}$. 
The notation {\tiny $\triangle$} means a lattice point on the boundary 
which is not a vertex 
and the notation \textcolor{red}{$\star$} means an interior lattice point.
} 
\end{figure}

For the polytopes $\Delta_{\III}$ and $\Delta_3(0;2,1,1)$ 
appearing in Definition~\ref{tropA3} (III), 
the polynomial on $\Delta_{\III}$ obtained in Lemma~\ref{singedge} 
induces the polynomial on $\Delta_3(0;2,1,1)$ as 
\[
1 +A x+x^2+y, 
\]
where $A=\pm2$.
Therefore the exceptional polytope in this case is 
$\hat{\Delta}_{\III}$.

For the polytopes $\Delta_{\IV}$ and $\Delta_3(1;2,1,1)$ 
appearing in Definition~\ref{tropA3} (IV), 
the polynomial on $\Delta_{\IV}$ obtained in Lemma~\ref{singedge} 
induces the polynomial on $\Delta_3(1;2,1,1)$ as
\[
1 +A x+x^2+Bxy+xy^2, 
\]
where $A=\pm2$. 
If $B = 0$, the exceptional polytope compatible with the data 
is $\hat{\Delta}_{\IV}$. 
Note that, if $B \neq 0$, the exceptional polytope compatible with the data 
is 
\[
\mathrm{Conv}\{ (2,0), (0,2), (0,-1) \}, 
\]
and it has no deformation pattern which defines an $1$-tacnodal curve, 
see the discussion in Lemma~\ref{nonirr}.

For the polytopes 
$\Delta_{\V}$ and $\Delta_3(0;4,1,1)$
appearing in Definition~\ref{tropA3} (V), 
the polynomial on $\Delta_{\V}$ obtained in Lemma~\ref{singedge} 
induces the same polynomial on $\Delta_3(0;4,1,1)$. 
Therefore, the exceptional polytope compatible with the data 
is $\hat{\Delta}_{\V}$.

\begin{lem}\label{refined}\it
For each $i=\III, \IV, \V$, 
there is a deformation pattern 
$\phi \in \mathcal{F}(\Hat{\Delta}_i)$ 
compatible with given data 
in Lemma \ref{singedge} 
which has the fixed coefficients on the vertices such that 
the curve defined by $\phi$ in $X(\hat{\Delta}_i)$ 
is a 1-tacnodal curve. 
\end{lem}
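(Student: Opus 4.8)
The strategy is to exhibit, for each of the three exceptional polytopes $\hat{\Delta}_{\III}$, $\hat{\Delta}_{\IV}$ and $\hat{\Delta}_{\V}$, an explicit polynomial $\phi$ with Newton polytope equal to that triangle, satisfying the truncation condition (b) of Definition~\ref{defpat} along the two edges $\hat{\sigma}_i$ meeting the $y$-axis and the vanishing-coefficient condition (a), and then to verify directly via Lemma~\ref{tacnode} that the curve $\{\phi=0\}$ in $X(\hat{\Delta}_i)$ has a single $A_3$-point in the torus. Since the two ``upper'' edges of each $\hat{\Delta}_i$ are forced by the data computed in Lemma~\ref{singedge} (for type~$\III$ the induced polynomial on $\Delta_3(0;2,1,1)$ is $1+Ax+x^2+y$ with $A=\pm2$; for type~$\IV$ it is $1+Ax+x^2+Bxy+xy^2$ with $A=\pm2$, $B=0$ in the relevant branch; for type~$\V$ it is $(1\pm x)^4+y$), and the coefficients on the vertices are prescribed, one has only a small number of free interior/boundary coefficients to choose. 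So the proof is a case-by-case construction followed by a finite elimination computation, exactly parallel to Lemmata~\ref{tacpiece}, \ref{tacpiece2} and to Shustin's treatment of deformation patterns in \cite[Subsection 3.5]{S}.

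\textbf{Type $\V$.} Here $\hat{\Delta}_{\V}=\mathrm{Conv}\{(0,-1),(4,0),(0,1)\}$ has only one interior lattice point, namely $(1,0)$, and the lattice points on the two slanted edges are $(1,0),(2,0),(3,0)$ on one side and $(0,0)$ plus the vertices. The truncation on the edge joining $(4,0)$ to $(0,1)$ must be (a constant times) $(1\pm x)^4$ — i.e. the coefficients of $1,x,x^2,x^3,x^4$ along the top are fixed to binomial coefficients — and likewise a condition on the bottom edge; together with condition~(a) that the $x^{m-1}=x^3$ coefficient vanish (which after the normalization used in the refinement is automatic from the $(1\pm x)^4$ shape only after an appropriate shift, so in practice one picks $\phi=(1\pm x)^4 + y + c\,y^{-1}$ and absorbs the shift into the choice of $\tau$). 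One then checks that the remaining single parameter $c$ can be fixed so that $\phi$ has a tacnode; the computation is a one-variable elimination mirroring equation~\eqref{sol1}.

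\textbf{Types $\III$ and $\IV$.} For $\hat{\Delta}_{\III}=\mathrm{Conv}\{(0,-1),(2,0),(0,3)\}$ one writes down the most general $\phi$ with this Newton polytope whose truncation on the edge from $(2,0)$ to $(0,3)$ matches $1+Ax+x^2+y$ (up to the monomial normalization) and whose truncation on the edge from $(2,0)$ to $(0,-1)$ matches the piece of $f_1$ or $f_2$; condition~(a) kills the $x^{m-1}$ coefficient. The interior lattice points of $\hat{\Delta}_{\III}$ are few, so again only one or two coefficients are free, and one solves the system $\phi=\phi_x=\phi_y=\mathrm{Hess}(\phi)=K(\phi)=0$ exactly as in Lemma~\ref{tacpiece}, obtaining an explicit tacnodal point in $(\C^*)^2$ and verifying condition~(S1)-type uniqueness by a gcd computation on the reduced system. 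Type $\IV$ is entirely analogous with $\hat{\Delta}_{\IV}=\mathrm{Conv}\{(0,-2),(2,0),(0,2)\}$ and the $A_1$-degeneration data.

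\textbf{Main obstacle.} The only genuinely delicate point is bookkeeping: one must make sure that the truncation polynomials $\phi^{\hat{\sigma}_i}$ really do coincide with the $\phi_i$ prescribed by the data of Lemma~\ref{singedge} (after the translation sending $z$ to the origin and the monomial clearing), since the sign ambiguity $A=\pm2$ and the location of the singular point on the toric divisor must be tracked consistently; getting the edges wrong would produce a polynomial whose Newton polytope is not actually $\hat{\Delta}_i$. Once the correct normal form is fixed, the existence of a tacnodal member is a routine finite elimination (Lemma~\ref{tacnode}), and the absence of worse singularities follows, as in the earlier lemmata, because the reduced system has a zero-dimensional solution set with the singular point occurring with the right multiplicity. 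I would therefore spend most of the written proof pinning down the normal forms and then quote ``by direct computation'' for the elimination steps.
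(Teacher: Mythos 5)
Your treatment of types $\III$ and $\IV$ follows the paper's own route: write the general $\phi\in\mathcal{F}(\hat{\Delta}_i)$ allowed by conditions (a), (b) of Definition~\ref{defpat} and the data of Lemma~\ref{singedge}, then eliminate through the system of Lemma~\ref{tacnode}. (Two quantitative slips there: for $\hat{\Delta}_{\III}$ the pattern has four free coefficients, not ``one or two'' --- the paper even finds a one-parameter family of tacnodal patterns --- and for $\hat{\Delta}_{\IV}$ the mid-points $(1,1)$, $(1,-1)$ of the slanted edges are pinned to zero by condition (b); neither affects the argument.)

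The genuine gap is type $\V$. You misread the lattice geometry of $\hat{\Delta}_{\V}=\mathrm{Conv}\{(0,-1),(4,0),(0,1)\}$: both slanted edges are primitive, so they carry no lattice points besides their endpoints, while $(1,0),(2,0),(3,0)$ are \emph{interior} points of the triangle. Hence condition (b) fixes only the vertex coefficients (the edge truncations are binomials such as $x^4+y$), the coefficients at $(0,0),(1,0),(2,0)$ are free parameters, and condition (a) forces the coefficient of $x^{3}=x^{m-1}$ to vanish. Your ansatz $\phi=(1\pm x)^4+y+c\,y^{-1}$ is therefore not a deformation pattern at all: its $x^3$-coefficient is $\pm4\neq 0$, and this cannot be ``absorbed into $\tau$'', since $\tau$ is precisely what makes the pattern satisfy (a). Worse, no value of $c$ rescues it: setting $g=y\phi=y(1\pm x)^4+y^2+c$, the equations $g_x=\pm 4y(1\pm x)^3=0$ and $g_y=(1\pm x)^4+2y=0$ force $y=0$, where $g=c\neq 0$ (and $c=0$ is excluded because the vertex $(0,-1)$ must carry a nonzero coefficient), so your one-parameter family has no singular member in the relevant chart, let alone a $1$-tacnodal one. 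The correct pattern, which the paper obtains after swapping $x$ and $y$ so that $\phi_{xx}\neq 0$ and then eliminating, is (up to the shift by $y$) $x^4+y+y^{-1}\mp 2$, with tacnode at $(x,y)=(0,\pm1)$; its restriction to the line $y=0$ is $x^4\mp2$, not $(1\pm x)^4$. So the type $\V$ case of your proof fails as written and must be redone with the general three-parameter ansatz along $y=0$.
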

\begin{proof}
(III)\;
For the polynomial 
\[
\phi:=1+Ay+x^2y+By^2+Cxy^2+Dy^3+y^4 
\in \mathcal{F}({\hat{\Delta}_{\III}})
\]
we apply Lemma \ref{tacnode} and eliminate the variables by the system 
$\phi=\phi_x=\phi_y=\mathrm{Hess}(\phi)=K(\phi)=0$. 
Notice that $y$ is nonzero. 
First, by $\phi_x=0$, we can get $C$ as 
\[
C=-\frac{2x}{y}. 
\]
Therefore the system is reduced as 
\[
\begin{cases}
&\text{(1)}\;\; 1+Ay-x^2y+By^2+Dy^3+y^4    =0, \\
&\text{(2)}\;\; A-3x^2+2By+3Dy^2+4y^3      =0, \\
&\text{(3)}\:\; 4By-12x^2+12Dy^2+24y^3     =0,  \\
&\text{(4)}\:\; -x^2+Dy^2+4y^3             =0. 
\end{cases}
\]
Secondly, 
by equation (4), we obtain  
\[
x^2 = y^2(D+4y). 
\]
Then the system is reduced as
\[
\begin{cases}
&\text{(1')}\;\; 
1+Ay-3y^4+By^2   =0, \\
&\text{(2')}\;\; 
A-8y^3+2By       =0, \\
&\text{(3')}\;\;
-B+6y^2          =0. 
\end{cases}
\]
Thirdly, by equation (3'), we can get $B$ as 
\[
B=6y^2.  
\]
Then the system is reduced as 
\[
\begin{cases}
&\text{(1'')}\;\; 
1+Ay+3y^4  =0, \\
&\text{(2'')}\;\; 
A+4y^3     =0. 
\end{cases}
\]
Hence we obtain $A=-4y^3$ and then the equation 
\[ 
y^4-1=0. 
\]
The solution is 
\[
(A,B,C,D,x,y)
=
(-4y_0^3,6y_0^2,-2x_0/y_0,D,x_0,y_0),
\]
where 
$y_0$ is a solution of $y^4-1=0$ and 
$x_0$ is a solution of $x^2=y_0^2(D+4y_0)$. 

Next, we check that 
the above $\phi$ has only one singularity and it is a tacnode. 
Notice that the curve $V_{\phi}$ defined by $\phi$ 
has a tacnode at $(x_0, y_0) \in \C^2$. 
Let $(s,t) \in \C^2$ be a singular point of $V_{\phi}$. 
Then we solve 
$\phi(s,t)=\phi_x(s,t)=\phi_y(s,t)=0$ 
and we check that the solution is only $(s,t)=(x_0, y_0)$. 
That is, the singularity of $\phi$ is only one point and is a tacnode. 

\vspace{2mm}
\noindent
(IV)\;
We consider the following polynomial 
\[
\phi:= 1+Ay+By^2+Cy^3+y^4+c_{11}xy+c_{13}xy^3 +x^2y^2
\in \mathcal{F}(\hat{\Delta}_{\IV}).
\] 
Note that 
$c_{11}, c_{13}$ 
are both zero 
because of the form of the polynomials 
derived by Lemma \ref{singedge} (IV).

For the polynomial 
\[
\phi:= 1+Ay+By^2+Cy^3+y^4+x^2y^2
\in \mathcal{F}(\hat{\Delta}_{\IV}), 
\] 
we eliminate the variables by the system 
$\phi=\phi_x=\phi_y=\mathrm{Hess}(\phi)=K(\phi)=0$ 
by Lemma~\ref{tacnode}. 
Notice that $y$ is nonzero. 
First, by $\phi_x=0$, we obtain $x=0$. 
Therefore the system is reduced as 
\[
\begin{cases}
&\text{(1)}\;\; 1+Ay+By^2+Cy^3+y^4   =0, \\
&\text{(2)}\;\; A+2By+3Cy^2+4y^3     =0, \\
&\text{(3)}\:\; B+3Cy+6y^2           =0,  \\
&\text{(4)}\:\; C+4y                 =0. 
\end{cases}
\]
Secondly, by equation (4), we obtain  
\[
C=-4y.  
\]
Then the system is reduced as
\[
\begin{cases}
&\text{(1')}\;\; 
1+4y+By^2-3y^4  =0, \\
&\text{(2')}\;\; 
A+2By-8y^3       =0, \\
&\text{(3')}\;\;
B-6y^2          =0. 
\end{cases}
\]
Thirdly, by equation (3'), we can get $B$ as 
\[
B=6y^2.  
\]
Then the system is reduced as 
\[
\begin{cases}
&\text{(1'')}\;\; 
1+Ay+3y^4  =0, \\
&\text{(2'')}\;\; 
A+4y^3     =0. 
\end{cases}
\]
Hence we obtain $A=-4y^3$ and then the equation 
\[
y^4-1=0. 
\]
The solution is 
\[
(A,B,C,x,y)
=
(-4y_0^3,6y_0^2,-4y_0,0,y_0),
\]
where 
$y_0$ is a solution of $y^4-1=0$. 

Next, we check that 
the above $\phi$ has only one singularity and it is a tacnode. 
Notice that the curve $V_{\phi}$ defined by $\phi$ 
has a tacnode at $(0, y_0) \in \C^2$. 
Let $(s,t) \in \C^2$ be a singular point of $V_{\phi}$. 
Then, we solve 
$\phi(s,t)=\phi_x(s,t)=\phi_y(s,t)=0$ 
and check that the solution is only $(s,t)=(0, y_0)$. 
That is, the singularity of $\phi$ is only one point and is a tacnode. 

\vspace{2mm}
\noindent
(V)\;
In this case, 
in order to achieve $\phi_{xx} \neq 0$, 
we exchange the variables $x$ and $y$ in $\phi$. 

For the polynomial 
\[
\phi :=1+ Ax + Bxy + Cxy^2 + xy^4 + x^2
\in \mathcal{F}(\hat{\Delta}_{\V}),
\] 
we eliminate the variables by the system 
$\phi=\phi_x=\phi_y=\mathrm{Hess}(\phi)=K(\phi)=0$ 
by Lemma~\ref{tacnode}. 
Notice that $x$ is nonzero. 
First, by $\phi=0$, we obtain 
\[
A=-\frac{1+x^2+Bxy+Cxy^2+xy^4}{x}.
\]
Therefore the system is reduced as 
\[
\begin{cases}
&\text{(1)}\;\; (x-1)(x+1)   =0, \\
&\text{(2)}\;\; B+2Cy+4y^3   =0, \\
&\text{(3)}\:\; \text{substituting $A$ for }\mathrm{Hess}(\phi)  =0,  \\
&\text{(4)}\:\; \text{substituting $A$ for }K(\phi)  =0. 
\end{cases}
\]
Secondly, by equation (2), we obtain 
\[
B=-2y(C+2y). 
\]
Then the system is reduced as
\[
\begin{cases}
&\text{(1')}\;\; 
(x-1)(x+1)  =0, \\
&\text{(3')}\;\; 
4x(C+6y^2)       =0, \\
&\text{(4')}\;\;
192xy          =0. 
\end{cases}
\]
Thirdly, by equation (3'), we can get $C$ as 
\[
C=-6y^2.  
\]
The solution is 
\[
(A,B,C,x,y)
=
(\mp 2,0,0,\pm 1, 0). 
\]

Next, we check that 
the above $\phi$ has only one singularity and it is a tacnode. 
Suppose that the tacnode is at $(1,0)$. 
Let $(s,t) \in \C^2$ be a singular point of $V_{\phi}$. 
Then we solve $\phi(s,t)=\phi_x(s,t)=\phi_y(s,t)=0$ 
and check that the solution is only $(s,t)=(1,0)$. 
That is, the singularity of $\phi$ is only one point and is a tacnode. 
We can check the condition for the case where the tacnode is at 
$(-1,0)$ by the same manner. 
\end{proof}

\begin{rem}
Among the calculation in this section, 
there are finitely many polynomials which define $1$-tacnodal curves  
except case (III) in Lemma \ref{refined}. 
In case (III) in Lemma \ref{refined}, 
we can get the conclusion without eliminating the variable $D$. 
This means that there exists a one-parameter family of 
deformation patterns which define $1$-tacnodal curves. 
\end{rem}

\subsection{Remarks on the polytope $\Delta_{\E}$}

By the above discussion, for each tropical 1-tacnodal curve, except 
case (E), 
there is a ``\textit{degenerate model of 1-tacnodal curve}" 
whose tropical amoeba is the tropical 1-tacnodal curve. 
In this subsection, 
we discuss what happens in case (E).

\begin{lem}\label{expol_nontac} \it
There is NOT a polynomial $f \in \mathcal{F}(\Delta_{\E})$ which 
defines a 1-tacnodal curve on $X(\Delta_{\E})$. 
\end{lem}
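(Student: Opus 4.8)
The plan is to rule out a tacnode by a genus count rather than by solving equations. The two numerical facts that make this work are: the quadrangle $\Delta_{\E}=\mathrm{Conv}\{(0,0),(2,0),(0,1),(1,2)\}$ has exactly one interior lattice point, namely $(1,1)$, so $\sharp\,\mathrm{Int}(\Delta_{\E})_{\Z}=1$; and an $A_3$-singularity on a curve drops the geometric genus below the arithmetic genus by its $\delta$-invariant, which is $2$ (two smooth branches meeting with intersection multiplicity $2$). Since $2>1$, no reduced irreducible curve in $|D(\Delta_{\E})|$ can carry a tacnode, and this is the contradiction I aim for.

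I would first record that any $f\in\mathcal{F}(\Delta_{\E})$ defines a reduced and irreducible curve. This follows from the Minkowski-indecomposability of $\Delta_{\E}$: reading the edge vectors $(2,0),(-1,2),(-1,-1),(0,-1)$ counterclockwise, no nonempty proper sub-multiset of the primitive edge vectors sums to zero, so a decomposition $\Delta_{\E}=A+B$ forces $A$ or $B$ to be a point; together with $(0,0)\in\Delta_{\E}$ (which rules out a nonconstant monomial factor), this shows $f$ admits no nontrivial factorization and is not a square times a polynomial. Next I would observe that $V_f$ lies in the smooth locus of $X(\Delta_{\E})$: the only singular points of $X(\Delta_{\E})$ are the torus-fixed points over the two non-Delzant vertices $(2,0)$ and $(1,2)$, and since these are vertices of $\Delta_{\E}$ the coefficients $c_{20},c_{12}$ of $f$ are nonzero, so $V_f$ avoids them. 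Hence adjunction applies near $V_f$ exactly as on a smooth surface, and the arithmetic genus of the closure $V_f\subset X(\Delta_{\E})$ equals $1+\tfrac12\bigl(D(\Delta_{\E})^2+D(\Delta_{\E})\cdot K_{X(\Delta_{\E})}\bigr)=1+\tfrac12\bigl(2\,\mathrm{Area}(\Delta_{\E})-\sharp\,\partial(\Delta_{\E})_{\Z}\bigr)=\sharp\,\mathrm{Int}(\Delta_{\E})_{\Z}=1$, the last equality being Pick's formula.

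Assembling these: if some $f\in\mathcal{F}(\Delta_{\E})$ defined a $1$-tacnodal curve, then $V_f$ would be a reduced irreducible curve of arithmetic genus $1$ whose unique singular point is an $A_3$ with $\delta$-invariant $2$, so its normalization would have genus $1-2=-1<0$, which is absurd. I expect the only point needing care to be the genus computation on the mildly singular surface $X(\Delta_{\E})$, which is exactly why it is worth isolating the observation that the vertex coefficients of $f$ keep $V_f$ off the two quotient singularities. If one prefers a hands-on argument in the spirit of Lemmas~\ref{tacpiece} and \ref{tacpiece2}, one can instead normalize $f$ to $1+ax+x^2+y+bxy+cxy^2$ (note that then $f_{xx}\equiv 2\neq 0$, so Lemma~\ref{tacnode} applies at every point), impose $f=f_x=f_y=\mathrm{Hess}(f)=K(f)=0$ for a singular point in the torus and check that every solution violates condition~(4) of Lemma~\ref{tacnode} or produces a second singular point, and separately dispose of a singular point on the toric boundary by inspecting the length-one and length-two edges of $\Delta_{\E}$; this works but is longer and less transparent than the genus count.
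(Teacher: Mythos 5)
Your proof is correct, but it runs along a genuinely different line from the paper's. The paper argues locally and computationally: writing $f=c_{00}+Ax+c_{20}x^2+c_{01}y+Bxy+c_{12}xy^2$, it notes $f_{xx}=2c_{20}\neq 0$, applies Lemma~\ref{tacnode}, and observes that $K(f)=0$ forces $f_{xy}=0$, i.e.\ $y=-B/(2c_{12})$; substituting into $f_y=c_{01}+Bx+2c_{12}xy=0$ kills the $x$-terms and yields $c_{01}=0$, contradicting that $(0,1)$ is a vertex of $N_f$. Your ``hands-on'' fallback sketched at the end is essentially this argument. Your main argument instead is a global genus obstruction: $\sharp\,\mathrm{Int}(\Delta_{\E})_{\Z}=1$ while $\delta(A_3)=2$, so a reduced irreducible curve of arithmetic genus $1$ cannot carry a tacnode. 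The supporting steps you supply are the right ones and check out: $\Delta_{\E}$ is Minkowski-indecomposable (no proper nonempty sub-multiset of the primitive edge steps $(1,0),(1,0),(-1,2),(-1,-1),(0,-1)$ sums to zero) and $c_{00}\neq0$, so every $f\in\mathcal{F}(\Delta_{\E})$ is irreducible; the only surface singularities of $X(\Delta_{\E})$ sit over the vertices $(2,0)$ and $(1,2)$, which $V_f$ avoids because the vertex coefficients are nonzero, so adjunction plus Pick gives $p_a(V_f)=1$; and then $g(\tilde V_f)=p_a-\delta\le 1-2<0$. What each approach buys: the paper's computation is two lines, needs no irreducibility, intersection theory, or facts about the (singular) toric surface, but is tied to the affine coordinates and to Lemma~\ref{tacnode}; your argument imports standard but heavier machinery (indecomposability of the Newton polygon, adjunction on a surface with quotient singularities), and in return excludes a tacnode located anywhere on the curve, including on the toric boundary, and in fact excludes any singularity configuration of total $\delta$-invariant at least $2$ --- which is consistent with, and explains, the paper's subsequent observation that curves in $\mathcal{F}(\Delta_{\E})$ can degenerate no further than a cusp.
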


\begin{proof}
We assume that a polynomial
\[
f:=c_{00}+Ax+c_{20}x^2+c_{01}y+Bxy+c_{12}xy^2
\]
defines a 1-tacnodal curve. 
Then, since $f_{xx}$ is non-zero, we can apply Lemma \ref{tacnode} and 
obtain 
$y=-B/2c_{12}$. 
Substituting it for 
$f_y=c_{01}+Bx+2c_{12}xy=0$, 
we get $c_{01}=0$, but this is a contradiction. 
\end{proof}

On the other hand, 
there is a polynomial 
$f \in \mathcal{F}(\Delta_{\E})$ 
which has a Newton degenerate singularity on 
$X(\sigma) \subset X(\Delta_{\E})$, 
where $\sigma \subset \Delta_{\E}$ is the edge of length 2. 
Actually, we can calculate as follows: 
Set 
$P:=\Delta_4(1;2,1,1,1), Q:=\Delta_3(0;2,1,1)$. 
We consider the polynomial 
\[
f:=c_{00}+Ax+c_{01}y+c_{20}x^2+Bxy+c_{12}xy^2 \in \mathcal{F}(P).
\]
By multiplying suitable constants to the variables 
and the whole polynomial, 
we can rewrite $f$ as 
\[
1+Ax+y+x^2+Bxy+Cxy^2 \in \mathcal{F}(P).
\]
If the curve $V_f\subset X(P)$ defined by $f$ intersects $X(\sigma)$ 
at two points, 
we can easily check that these points are smooth points of $V_f$ 
and the intersection $V_f \cap X(\sigma)$ is transversal. 
Therefore $V_f \cap X(\sigma)$ is exactly one point. 
Then, $f$ can be rewritten as follows:  
\[
f=(\epsilon+x)^2+y+Bxy+Cxy^2 \in \mathcal{F}(P), 
\]
where $\epsilon = \pm 1$. 
Set $(X,Y):=(x + \epsilon ,y)$. 
Then $f$ is rewritten as follows: 
\[
\tilde{f}(X,Y):=
X^2+BXY+(1 \mp B)Y+CXY^2 \mp CY^2. 
\]

Thus the most complicated isolated singular point 
defined by this polynomial 
at the origin 
(under the condition that the form of the polynomial does not change) 
is given as 
\[
X^2 \pm XY + \frac{1}{4}Y^2 +\text{(higher terms)}.  
\] 
More precisely, since the polynomial $f$ is irreducible, 
the number of interior lattice points of $\Delta_{\E}$ is two and 
the curve defined by $f$ has no singularity more complicated than $A_3$, 
the curve has only a cusp as the singularity.

Applying mechanically refinement arguments in this case, 
we find that the edge 
$\Delta_4(1;2,1,1,1) \cap \Delta_3(0;2,1,1)$ 
does not correspond a 1-tacnodal curve as follows:
By above discussion, the exceptional polytope in this case is 
$\hat{\Delta}_2$. 
We only consider the case of 
$\epsilon=1$. 
The other case can be proved by the same argument. 
According to the explanation of a deformation pattern in 
Definition~\ref{defpat},
we set 
\[
\phi:=1+A'y+x^2y+B'y^2 +xy^2+\frac{1}{4}y^3 
\in \mathcal{F}(\hat{\Delta}_2). 
\]
By direct computation, we get $\phi_{xx} \neq 0$. 
Using Lemma~\ref{tacnode}, 
we obtain $8B'x = 0$. 
Both cases $x=0$ and $B'=0$ contradict $\phi=0$.

In \cite{S}, it is assumed that 
each polynomial $f_i$ has only semi-quasi-homogeneous singularity 
since the paper only deals with the case of nodal or 1-cuspidal curves. 
This assumption may not be reasonable in the case of 1-tacnodal curves. 
Actually, when we list the possible polytopes for 1-tacnodal curves 
we cannot ignore case (E). 
This is the reason why this case is included in the definition 
of tropical 1-tacnodal curves. 
Note that, in fact, by the above discussion, 
there is no degenerate model of 1-tacnodal curve corresponding to case (E).

\section{Main Result}

The main theorem of this paper is the following:

\begin{thm}\label{thm1}\it
Let $F \in K[z,w]$ be a polynomial 
which defines an irreducible 1-tacnodal curve. 
If the rank of the tropical amoeba $T_F$ defined by $F$ 
is more than or equal to 
the number of the lattice points of the Newton polytope of 
$F$ minus four, 
then $T_F$ is a tropical 1-tacnodal curve. 
\end{thm}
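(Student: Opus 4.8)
The plan is to turn the rank hypothesis into a rigid combinatorial restriction on the dual subdivision $S_F$ of $T_F$, and then to feed the tacnode of the curve through the tropicalization machinery of Subsection~\ref{troplicalization} until $S_F$ is pinned to one of the ten configurations of Definition~\ref{tropA3}. First I would record the numerology. Set $\Delta := N_F$ and put $\delta := \sharp\Delta_{\Z} - \sharp V(S_F) \ge 0$, the number of lattice points of $\Delta$ lying on edges or in interiors of $2$-cells of $S_F$, and $P := \sum_{\ell \ge 3}(\ell - 3)N_{\ell} = \sum_k (\sharp V(\Delta_k) - 3)$. Since $\rkexp(T_F) = \sharp V(S_F) - 1 - P = \sharp\Delta_{\Z} - 1 - \delta - P$, the hypothesis $\rk(T_F) \ge \sharp\Delta_{\Z} - 4$ becomes $d(T_F) \ge \delta + P - 3$, where $d(T_F) = \rk(T_F) - \rkexp(T_F) \ge 0$ by Lemma~\ref{rklem}. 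If $S_F$ is a TP-subdivision then $d(T_F) = 0$ and $P = N'_4$, so $\delta + N'_4 \le 3$; otherwise Lemma~\ref{rklem} gives $2d(T_F) \le \mathcal{N}_{S_F} \le P - 1$, which combined with $d(T_F) \ge \delta + P - 3$ yields $2\delta + P \le 5$. In both cases $S_F$ is a unimodular triangulation carrying only a bounded amount of ``defect'', so only finitely many defect patterns occur, and I would enumerate these up to $\mathrm{Aff}(\Z^2)$-equivalence.

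Next I would run the tropicalization of $C = \{F = 0\}$. The tacnode propagates along $t \in D^*$ to a section $s\colon D^* \to X(\check{\Delta}_F)$, and $s(0)$ is either a regular or an irregular singular point of $C^{(0)} = \bigcup_i V_{f_i}$. If $s(0)$ is regular it lies in the torus of a single component $X(\Delta_i)$, so $f_i$ defines a plane curve with a torus $A_3$-point; then, combining the auxiliary lemmas of Subsections~4.1--4.2 on Newton polytopes of singular curves with Lemma~\ref{unique}, Lemma~\ref{tacpiece} and Lemma~\ref{tacpiece2} and with the bound $2\delta + P \le 5$ (resp.\ $\delta + N'_4 \le 3$), I would show $\Delta_i$ must be equivalent to $\Delta_{\I}$, $\Delta_{\II}$, $\Delta_{\VI}$, $\Delta_{\VII}$, $\Delta_{\VIII}$ or $\Delta_{\IX}$. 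If $s(0)$ is irregular it lies on a toric divisor $X(\sigma)$ for an edge $\sigma$ of lattice length $m \ge 2$ shared by two $2$-cells $\Delta_1, \Delta_2$; the refinement of Subsection~\ref{troplicalization} then produces the exceptional polytope $\Delta_z$ and demands a deformation pattern compatible with $(f_1, f_2, z)$ in the sense of Definition~\ref{defpat} that again defines a $1$-tacnodal curve. Lemma~\ref{singedge} and Lemma~\ref{refined}, with Lemma~\ref{expol} identifying $\Delta_4(1;2,1,1,1)$ as $\Delta_{\E}$, give that the admissible $(\Delta_1 \cup \Delta_2, \sigma)$ are precisely those of types (III), (IV), (V), while the analysis of Subsection~3.4 (Lemma~\ref{expol_nontac} and the surrounding discussion) shows the remaining case, in which $\Delta_{\E}$ occurs but carries only a cusp and admits no suitable deformation pattern, is exactly type (E). In each of the ten cases the inequality $2\delta + P \le 5$ (resp.\ $\delta + N'_4 \le 3$) is saturated or nearly so, so no further lattice point can be absorbed and no remaining $2$-cell can fail to be a triangle of area $1/2$; hence the complement of the distinguished configuration is a unimodular triangulation and $T_F$ is a tropical $1$-tacnodal curve.

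The hard part will be the exhaustiveness of this case analysis. Two issues demand genuine effort: in the regular case, ruling out every polytope with few interior lattice points that is a priori compatible with $2\delta + P \le 5$ but is absent from Definition~\ref{tropA3} -- here the explicit eliminations behind Lemma~\ref{tacpiece}, Lemma~\ref{tacpiece2} and the non-existence result Lemma~\ref{expol_nontac} are indispensable, and one must remember that the pieces $f_i$ may be reducible, so that crude genus bounds do not settle matters; and in the irregular case, controlling, for each shared edge of length $m \ge 2$ compatible with the rank bound, both the shape of the exceptional polytope and whether a compatible deformation pattern defining a tacnode exists. Once these are in hand, irreducibility of $C$ excludes decompositions of $S_F$ into several disjoint special pieces, which would force $C^{(0)}$ and hence $C$ to be reducible, and the saturation of the rank inequality completes the argument.
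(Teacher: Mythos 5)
Your outline reproduces the paper's architecture faithfully: the rank hypothesis is converted via Lemma~\ref{rklem} into the bounds $\delta+N_4'\le 3$ (TP case) and $2\delta+P\le 5$ (non-TP case), which match the paper's counts in cases (A)--(D); the tacnode is then traced through the tropicalization, split into regular versus irregular limit, and the explicit Lemmata~\ref{unique}, \ref{tacpiece}, \ref{tacpiece2}, \ref{singedge}, \ref{refined}, \ref{nonreg}, \ref{nonirr} and Remark~\ref{onelength} are invoked to pin the distinguished piece of $S_F$ to the ten types of Definition~\ref{tropA3}. However, there is one genuine missing ingredient. The paper does not work with the rank inequality alone: using irreducibility and the fact that a $1$-tacnodal curve has geometric genus $\sharp\mathrm{Int}\Delta_{\Z}-2$, it imports from \cite[Subsection 3.3]{S} the constraint $\sharp\partial\Delta_{\Z}-\sharp(V(S)\cap\partial\Delta)\in\{0,1\}$, and this is what splits the proof into cases (A)/(C) versus (B)/(D) and, crucially, eliminates almost all configurations in which the combinatorial defect sits on $\partial\Delta$. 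Your bounds permit a boundary defect up to $3$; for instance, a TP-subdivision consisting of unit triangles together with one triangle equivalent to $\Delta_{\V}$ whose edge of length $4$ lies on $\partial\Delta$ satisfies $\delta+N_4'=3$ but is not a tropical $1$-tacnodal curve. Your stated toolkit cannot dispose of such cases: a regular limit in the sense of Subsection~\ref{troplicalization} need not lie in the maximal torus of a component $X(\Delta_i)$ (it may lie on $X(\sigma)$ with $\sigma\subset\partial\Delta$, which belongs to a single component), and the deformation-pattern mechanism of Definition~\ref{defpat} is set up only for interior edges $\sigma=\Delta_1\cap\Delta_2$. So the dichotomy ``torus tacnode in a cell or deformation pattern across an interior edge'' does not exhaust the possibilities once the defect is allowed on the boundary, and your proposal offers no replacement for the genus argument that caps this defect at $1$ (after which the paper still needs the dedicated analyses (B) and (D) to kill the residual defect-$1$ configurations).

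Two smaller points. First, your closing appeal to irreducibility (``several disjoint special pieces would force $C^{(0)}$, hence $C$, to be reducible'') is not how such configurations are excluded in the paper, and it is not correct as stated; configurations such as three copies of $\Delta_3(1;1,1,1)$ are removed because none of the pieces or shared edges is $1$-tacnodal, while the actual use of irreducibility is the genus equality above, which is absent from your plan. Second, the claim that in each surviving case the inequality is ``saturated or nearly so, so the rest of $S$ consists of unit triangles'' only becomes a proof after the boundary defect has been controlled and after the finitely many residual defect patterns (the paper's lists (A-0)--(A-3), (B-0)--(B-2), (C-0)--(C-2), (D)) have each been checked against Lemmata~\ref{nonreg} and \ref{nonirr}; you correctly flag this exhaustive check as the hard part, but as written the enumeration you would be checking is larger than the paper's and partly outside the reach of the lemmas you cite.
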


Let $F$ be a polynomial in the assertion, 
$T_F$ be the tropical amoeba defined by $F$, 
whose rank satisfies 
\[
\sharp \Delta_{\Z} -1 
\ge 
\rk(T_F)
\ge 
\sharp \Delta_{\Z}-4,
\] 
and $S$ be the dual subdivision of $T_F$. 
We remark that, from the discussion in \cite[Section 4]{S}, 
if 
$\sharp \Delta_{\Z} -1 \ge 
\rk(T_F)
\ge \sharp \Delta_{\Z}-3$, 
then $T_F$ is smooth, nodal or 1-cuspidal. 
Thus, by Remark~\ref{KR}, 
we can assume that the rank of $T_F$ is $\sharp \Delta_{\Z}-4$. 

From the discussion in \cite[Subsection 3.3]{S} 
and the equality $g(C^{(t)}) = \sharp \mathrm{Int} \Delta_{\Z}-2$, 
we can see that
\[
   \sharp \partial\Delta_{\Z} - \sharp ( V(S) \cap \partial\Delta)
   =0 \; \text{or} \; 1.
\]
We decompose the proof into four cases
\begin{itemize}
\item[(A)]
$S$ is a TP-subdivision and satisfies 
$\sharp \partial \Delta_{\Z} - \sharp(V(S)\cap \partial \Delta)=0$, 
\item[(B)]
$S$ is a TP-subdivision and satisfies 
$\sharp \partial \Delta_{\Z} - \sharp(V(S)\cap \partial \Delta)=1$,
\item[(C)]
$S$ is NOT a TP-subdivision and satisfies 
$\sharp \partial \Delta_{\Z} - \sharp(V(S)\cap \partial \Delta)=0$,
\item[(D)]
$S$ is NOT a TP-subdivision and satisfies 
$\sharp \partial \Delta_{\Z} - \sharp(V(S)\cap \partial \Delta)=1$. 
\end{itemize}
For each case, we remove polytopes which cannot correspond 
to a $1$-tacnodal curve and show that the remaining polytopes 
are exactly tropical $1$-tacnodal curves in Definition~\ref{tropA3}.

To explain the removing process more precisely, 
we prepare some terminologies. 

\begin{defi}
A 2-dimensional polytope $P$ is \textit{1-tacnodal} if 
there is a polynomial 
$f \in \mathcal{F}(P)$ 
which defines a 1-tacnodal curve 
$V_f \in |D(P)|$ 
satisfying the conditions (S1) and (S2) in Subsection \ref{existtac}. 

Let $\sigma:=P_1 \cap P_2$ be an edge which is 
the intersection of 2-dimensional polytopes $P_1$ and $P_2$. 
The edge $\sigma$ is \textit{1-tacnodal} 
if there is a pair of polynomials 
$(f_1, f_2) \in \mathcal{F}(P_1) \times \mathcal{F}(P_2)$ 
such that 
\begin{itemize}
\item  
their truncation polynomials 
$f_1^{\sigma}, f_2^{\sigma}$ 
on $\sigma$ are same, 

\item  
each of the curves $C_1$ and $C_2$ 
defined by $f_1$ and $f_2$ 
has a smooth point or an isolated singular point at $z$ in $X(\sigma)$, 

\item  
there exists a deformation pattern 
$\phi \in \mathcal{F}(\Delta_{z})$ 
compatible with the above data 
which defines a 1-tacnodal curve in $X(\Delta_{z})$. 
\end{itemize}
\end{defi}

It can be seen from the discussion in Subsection~\ref{existtac} that 
the polytopes and the pairs of polytopes 
appearing in Definition \ref{tropA3} are $1$-tacnodal. 
To prove the theorem, for each of cases (A), (B), (C) and (D),
we carry out the following arguments.
\begin{itemize}
\item[(1)] 
Remove configurations of edges 
and interior lattice points of polytopes which do not exist.
\item[(2)] 
Classify polytopes that are not $1$-tacnodal.
\item[(3)] 
From the list in (2), remove polytopes 
which do not have $1$-tacnodal edges.
\end{itemize}

In Subsection 4.1, 
we prepare lemmata for the non-existence of polytopes in (1), 
and then prove the theorem for case (A), (B), (C) and (D) 
in Subsection 4.2, 4.3, 4.4 and 4.5, respectively.

\subsection{Auxiliary definitions and lemmata}

\begin{lem}[On interior lattice points] \label{poly2} \it
(1)\; The number of interior lattice points of non-parallel quadrangle 
whose edges are length $1$ is larger than $0$. \\
(2)\; For an integer $m \ge 5$, 
the number of interior lattice points of an $m$-gon is larger than $0$.
\end{lem}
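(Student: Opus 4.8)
The plan is to prove both parts by contradiction: in each case I assume the polytope has no interior lattice point and contradict its combinatorial type, using Pick's formula in part~(1) and the structure theory of interior-point-free lattice polygons in part~(2).

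\emph{Part (1).} Suppose $P$ is a convex lattice quadrangle all of whose edges have lattice length $1$ and with $\mathrm{Int}(P)\cap\Z^2=\emptyset$. Since every edge has lattice length $1$, the only lattice points of $\partial P$ are the four vertices, so Pick's formula gives $\mathrm{Area}(P)=0+\frac{4}{2}-1=1$. Next I would triangulate $P$ along each of its two diagonals; in each case the two resulting lattice triangles have areas that are positive integer multiples of $\frac12$ summing to $1$, hence all four of these triangles have area exactly $\frac12$, i.e.\ are unimodular. Writing the vertices in counterclockwise cyclic order $A,B,C,D$, translating $A$ to the origin, and letting $b,c,d$ be the position vectors of $B,C,D$, the unimodularity of the triangles $ABC$, $ACD$, $ABD$ reads $\det(b,c)=\det(c,d)=\det(b,d)=1$. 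Then $\det(b,c-d)=\det(b,c)-\det(b,d)=0$, and since $\det(b,c)=1$ forces $b$ to be primitive, this gives $c-d=kb$ for some $k\in\Z$; now $\det(c,d)=\det(d+kb,d)=k\det(b,d)=k$ and $\det(c,d)=1$ yield $k=1$. Hence $C-D=c-d=b=B-A$, so $P$ is a parallelogram, contradicting that $P$ is non-parallel. Therefore $\mathrm{Int}(P)$ contains a lattice point.

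\emph{Part (2).} Here I would invoke the classical classification of convex lattice polygons with no interior lattice point: up to $\mathrm{Aff}(\Z^2)$-equivalence such a polygon is either contained in the strip $\{0\le y\le 1\}$ or equals $\mathrm{Conv}\{(0,0),(2,0),(0,2)\}$. In the first case every lattice point of the polygon, in particular every vertex, lies on one of the two lines $y=0$, $y=1$, and a convex polygon has at most two vertices on a single line, so the polygon has at most four vertices; in the second case it is a triangle. Hence a convex lattice polygon with at least five edges cannot be interior-point-free. (In the special case where all edges have length $1$ one can argue directly instead: the absence of boundary lattice points beyond the $m$ vertices forces every triangle spanned by three of the vertices to be unimodular, and for $m\ge 5$ this makes three consecutive vertices collinear, which is impossible.)

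The main obstacle is part~(2): the classification it rests on is not a one-line fact, precisely because of the exceptional triangle $\mathrm{Conv}\{(0,0),(2,0),(0,2)\}$, whose lattice-width-one slices meet $\Z^2$ only on the boundary — so one cannot simply say ``lattice width at least $2$ implies an interior point.'' In the write-up I would either cite this classical result or reproduce its short proof. Part~(1), by contrast, is entirely elementary once Pick's formula is available, and the non-parallel hypothesis is used only at the last step, where the excluded alternative — a lattice parallelogram such as the unit square — genuinely has no interior lattice point.
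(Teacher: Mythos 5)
Your proposal is correct. Part (1) is essentially the paper's own argument: both proofs use Pick's formula to force the quadrangle to decompose into unimodular triangles and then conclude it must be a parallelogram, contradicting the non-parallel hypothesis; you carry this out with a determinant identity ($\det(b,c)=\det(c,d)=\det(b,d)=1$ forcing $C-D=B-A$), whereas the paper normalizes one triangle to $\mathrm{Conv}\{(0,0),(1,0),(0,1)\}$ and reads off the fourth vertex $(1,1)$ from Pick -- the same idea in different coordinates. Part (2) is where you genuinely diverge: the paper reduces an $m$-gon to a pentagon spanned by five of its vertices and invokes the minimality of $\Delta_{\VII}$ (i.e.\ that every lattice pentagon has an interior lattice point), stating this as ``obvious''; you instead invoke the classical classification of lattice polygons without interior lattice points (up to $\mathrm{Aff}(\Z^2)$ either contained in the strip $0\le y\le 1$ or equal to $\mathrm{Conv}\{(0,0),(2,0),(0,2)\}$) and observe that a strip polygon has all its vertices on two horizontal lines, hence at most four vertices. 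Each route rests on one non-trivial input: the paper's pentagon fact is asserted without reference, while your classification is a standard, citable theorem, and your argument has the added benefit of proving the pentagon fact the paper uses as a by-product; the paper's reduction, on the other hand, is shorter and stays inside the combinatorics already set up in its Lemma on $\Delta_{\VII}$. Your parenthetical fallback (unimodular triangles forcing degeneracy when all edges have length $1$) only covers that special case -- indeed applying your part-(1) parallelogram relation to the two quadrangles $v_1v_2v_3v_4$ and $v_2v_3v_4v_5$ gives $v_5=v_1$ directly -- so in the write-up the citation (or a short proof) of the classification is what carries the general statement, as you yourself note.
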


\begin{proof}
(1)\;
If a non-parallel $\Delta_4(0;1,1,1,1)$ exists, 
it can be decomposed into two triangles of area $1/2$.
Thus, we can map this polytope to 
\[\mathrm{Conv}\{ (0,0),(1,0),(0,1),(p,q) \}\]
by some isomorphism.
Then, from Pick's formula, we obtain 
\[ \frac{p+q}{2}=1. \]
Hence $p=q=1$. This is a parallelogram. \\
(2)\;
It is obvious from the facts that 
the minimum pentagon is $\Delta_ {\VII}$ 
and 
any $m$-gon can be decomposed into polytopes including a pentagon. 
\end{proof}

\begin{lem}[Non-existence of some polytopes] \label{polyt1}\it
(1)\; Following polytopes do NOT exist:
\[ 
\Delta_3(1;2,2,1), \;\; 
\Delta_3(1;3,1,1), \;\;
\Delta_3(0;2,2,1), \;\;
\Delta_3(0;3,2,1), \;\;
\Delta_5(0;2,1,1,1,1). 
\]
\noindent
(2) \; 
There is NO non-parallel quadrangle 
$\Delta_4(0;2,2,1,1)$. 
\end{lem}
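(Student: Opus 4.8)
The plan is to push everything through Pick's formula. Note first that a lattice polygon with $m$ edges of lattice lengths $\ell_1,\dots,\ell_m$ has exactly $\sum_i\ell_i$ boundary lattice points: each edge of lattice length $\ell$ contributes $\ell-1$ boundary points besides its two endpoints, and the $m$ vertices contribute $m$, for a total of $\sum_i(\ell_i-1)+m=\sum_i\ell_i$. Hence for a hypothetical $\Delta_m(I;\ell_1,\dots,\ell_m)$ Pick's formula yields
\[
2\cdot\mathrm{Area}=2I+\sum_{i=1}^{m}\ell_i-2 .
\]
I would compute the right-hand side for each candidate and then derive a contradiction from geometric constraints.

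For the four triangles in (1) I would apply an $\mathrm{Aff}(\Z^2)$-transformation moving the longest edge, of lattice length $\ell$, onto the segment from $(0,0)$ to $(\ell,0)$; the third vertex is then a lattice point $(p,q)$ with $q\ge 1$, so $2\cdot\mathrm{Area}=\ell q$ and therefore $\ell$ must divide $2I+\sum_i\ell_i-2$. This fails in all four cases: the quantity $2I+\sum_i\ell_i-2$ equals $5$ for $\Delta_3(1;2,2,1)$ and $\Delta_3(1;3,1,1)$ (not divisible by $2$, resp. $3$), equals $3$ for $\Delta_3(0;2,2,1)$ (not divisible by $2$), and equals $4$ for $\Delta_3(0;3,2,1)$ (not divisible by $3$). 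The same computation shows more generally that no lattice triangle has edge lengths $\{2,2,1\}$, a fact I will reuse. The pentagon $\Delta_5(0;2,1,1,1,1)$ needs no new work: by Lemma~\ref{poly2}(2) every pentagon has a positive number of interior lattice points, contradicting $I=0$.

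For the non-parallel quadrangle $\Delta_4(0;2,2,1,1)$ of (2) we get $\sum_i\ell_i=6$ boundary points and $\mathrm{Area}=2$. I would first observe that both diagonals are primitive: a lattice point in the relative interior of a diagonal would be an interior lattice point of the quadrangle. Splitting along a diagonal into two lattice triangles, I distinguish the two cyclic arrangements of the edge multiset $\{2,2,1,1\}$. If the two edges of length $2$ are adjacent, choosing the diagonal that separates them from the two edges of length $1$ produces a triangle with edge lengths $\{2,2,1\}$, which does not exist. If the two edges of length $2$ are opposite, each triangle has edge lengths $\{2,1,1\}$ and hence area $\ge 1$; since the areas sum to $2$, each is exactly $1$, so each triangle is $\mathrm{Aff}(\Z^2)$-equivalent to $\mathrm{Conv}\{(0,0),(2,0),(0,1)\}$. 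Normalizing one copy to this triangle and using convexity of the quadrangle to locate the remaining vertex, one finds that the quadrangle is, up to $\mathrm{Aff}(\Z^2)$, the parallelogram $\mathrm{Conv}\{(0,0),(2,0),(2,1),(0,1)\}$, which is parallel; hence no non-parallel $\Delta_4(0;2,2,1,1)$ exists. Alternatively, one can invoke the classification of lattice polygons with no interior lattice point: such a polygon is either equivalent to $\mathrm{Conv}\{(0,0),(2,0),(0,2)\}$ or contained in a width-one strip; a quadrangle is not the former, so it lies in such a strip, its two transverse edges have length $1$, and matching the edge multiset forces the two parallel strip-edges to be the edges of length $2$, i.e.\ a parallelogram.

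\textbf{The main obstacle} is the final case of (2): after identifying both triangular pieces with $\mathrm{Conv}\{(0,0),(2,0),(0,1)\}$, one must check the finitely many ways of gluing two such triangles along a unit edge and verify that convexity of the union forces the parallelogram in each. This is the only step requiring genuine casework; if one prefers, it is bypassed entirely by citing the classification of interior-point-free lattice polygons. Everything else reduces to a one-line divisibility argument via Pick's formula or to Lemma~\ref{poly2}.
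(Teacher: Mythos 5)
Your argument is correct and follows the same basic strategy as the paper: Pick's formula for the triangles, and a split of the quadrangle along a primitive diagonal into two triangles, with the adjacent case killed by the nonexistence of a triangle with edge lengths $2,2,1$ and the opposite case reduced to gluing two copies of $\Delta_3(0;2,1,1)$ and checking that only the parallelogram survives. The differences are small but real: you package all four triangle cases into one divisibility criterion ($\ell$ divides $2I+\sum_i\ell_i-2$ for any edge length $\ell$, obtained by putting that edge on the $x$-axis), whereas the paper computes only $\Delta_3(1;2,2,1)$ explicitly (getting $q=5/2$) and declares the other three ``easily checked''; you dispose of $\Delta_5(0;2,1,1,1,1)$ by citing Lemma~\ref{poly2}(2) directly, while the paper instead splits the pentagon into two quadrangles $\Delta_4(0;1,1,1,1)$ and uses that these are forced to be parallelograms; and your stronger observation that \emph{no} lattice triangle has edge lengths $\{2,2,1\}$ (parity of $2\cdot\mathrm{Area}$) spares you from arguing that the triangular piece in the adjacent case has no interior lattice points. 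The convexity casework you flag in the opposite case is genuine but finite and works out as you predict: after normalizing one triangle to $\mathrm{Conv}\{(0,0),(2,0),(0,1)\}$ with the primitive diagonal as a unit edge, the fourth vertex is pinned down by the area and length constraints and convexity leaves only the parallelogram; the paper's own treatment is the equally terse check that the fourth vertex must be one of $(0,-2),(1,-2),(2,-2)$, none admissible. Your alternative via the classification of lattice polygons without interior points (equivalent to $\mathrm{Conv}\{(0,0),(2,0),(0,2)\}$ or of lattice width one) also closes this case cleanly, at the cost of importing a result the paper does not use.
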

\begin{proof}
(1)\;
The first triangle is equivalent to 
\[ \mathrm{Conv}\{ (p,0),(p+2,0),(0,q) \}.  \]
By Pick's formula, 
we obtain $q=5/2$. 
But this contradicts $q \in \Z$. 
We can easily check the non-existence of the 
second, third and fourth triangles.
If there exists a pentagon 
$\Delta_5(0;2,1,1,1,1)$, 
we can split it into two quadrangles 
$\Delta_4(0;1,1,1,1)$ and $\Delta'_4(0;1,1,1,1)$. 
But these quadrangles are parallelograms by the fact that 
already proved in Lemma~\ref{poly2} (1). 
Thus the union can not be a pentagon. 

\vspace{2mm}
\noindent
(2) \; 
If it exists, then the edges of length $2$ are either 
adjacent or in opposite sides. 
The former case can not occur 
since a triangle $\Delta_3(0;2,2,1)$ does not exist.  
In the latter case, 
we can split it into two triangles 
$\Delta_{3}(0;2,1,1)$ and $\Delta'_{3}(0;2,1,1)$. 
We can assume that one of the triangles 
is isomorphic to 
$\mathrm{Conv} \{(0,0), (1,0), (0,2) \}$ 
and the common edge is the bottom edge.
Then, by Pick's formula, 
the last vertex of 
$\Delta_4(0;2,2,1,1)$ 
must be one of the following lattice points 
\[ (0,-2),\;\; (1,-2),\;\; (2,-2), \] 
but all of them do not satisfy the required conditions.
\end{proof}

\begin{lem}\it \label{nonisol}
For the polytope 
\[
P:=\mathrm{Conv}\{(0,0), (2,0), (0,1), (2,1)\}, 
\]
the polynomial 
\[
f :=c_{00}+Ax+c_{20}x^2+c_{01}y+Bxy+c_{21}x^2y
\in \mathcal{F}(P)
\]
satisfies $f=f_x=f_y=\mathrm{Hess}(f)=0$ 
if and only if 
\[ c_{21}c_{00}=c_{20}c_{01}.  \]
Moreover, if $f$ satisfies 
$f=f_x=f_y=\mathrm{Hess}(f)=0$, 
i.e., $V_f \subset X(\Delta_X)$ 
has a singularity more complicated than $A_2$,
then $f$ has the form 
\[
   (y+1)(x\pm 1)^2
\]
up to multiplication of a non-zero constant.
In particular, the set of singularities of $V_f$ is non-isolated.
\end{lem}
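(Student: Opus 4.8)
The plan is to exploit the fact that $f$ is affine-linear in $y$: writing $f = g(x) + y\,h(x)$ with $g(x) := c_{00} + Ax + c_{20}x^2$ and $h(x) := c_{01} + Bx + c_{21}x^2$, both $g$ and $h$ are genuine quadratics, since $c_{20}, c_{21} \neq 0$ because $(2,0)$ and $(2,1)$ are vertices of $P$. Because $f_{yy} \equiv 0$, one obtains $\mathrm{Hess}(f) = f_{xx}f_{yy} - f_{xy}^2 = -\,h'(x)^2$, which depends on $x$ alone. This is the key observation that makes the whole system transparent.

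First I would record what the four equations say at a point $(x_0,y_0)$. The equation $f_y = h(x) = 0$ forces $h(x_0) = 0$, and since $h(0) = c_{01} \ne 0$ this gives $x_0 \ne 0$; the equation $\mathrm{Hess}(f) = -h'(x)^2 = 0$ forces $h'(x_0) = 0$. Hence $x_0$ is a double root of the quadratic $h$, so $h(x) = c_{21}(x - x_0)^2$, which in particular yields $c_{01} = c_{21}x_0^2$ and $B = -2c_{21}x_0$. Substituting $h(x_0) = h'(x_0) = 0$ into $f = g(x_0) + y_0 h(x_0)$ and $f_x = g'(x_0) + y_0 h'(x_0)$ shows $g(x_0) = g'(x_0) = 0$, so likewise $g(x) = c_{20}(x - x_0)^2$, giving $c_{00} = c_{20}x_0^2$ and $A = -2c_{20}x_0$. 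From $c_{00} = c_{20}x_0^2$ and $c_{01} = c_{21}x_0^2$ one reads off $c_{00}c_{21} = c_{20}c_{01}$, which proves one implication; the converse follows by running the computation backwards, choosing $x_0$ with $x_0^2 = c_{00}/c_{20} = c_{01}/c_{21}$ (possible precisely because of the product relation), setting $A$ and $B$ as above, and taking $y_0$ arbitrary.

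For the ``moreover'' part I would note that the computation above already yields $f = g(x) + y\,h(x) = (x - x_0)^2(c_{20} + c_{21}y)$, which is reducible and has the entire line $\{x = x_0\}$ in its singular locus, so $V_f$ has a non-isolated singularity. To reach the stated normal form, substitute $x = x_0\tilde{x}$ (legitimate since $x_0 \ne 0$) and $y = (c_{20}/c_{21})\tilde{y}$ and divide by the non-zero constant $c_{20}x_0^2$; this turns $f$ into $(\tilde{x} - 1)^2(\tilde{y} + 1)$, while the substitution $x = -x_0\tilde{x}$ instead gives $(\tilde{x} + 1)^2(\tilde{y} + 1)$, i.e. the form $(y+1)(x \pm 1)^2$ up to rescaling of the variables and multiplication by a non-zero constant.

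The argument is essentially a short finite computation once the linearity of $f$ in $y$ is used, so I do not expect a serious obstacle; the only point requiring care is keeping track of which coefficients are forced to be non-zero, namely the vertex coefficients $c_{00}, c_{20}, c_{01}, c_{21}$ by $N_f = P$, so that $g$ and $h$ are honestly of degree two and $x_0 \ne 0$ — which is exactly what legitimizes the double-root argument and the final change of variables.
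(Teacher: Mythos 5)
Your proof is correct and, since the paper disposes of this lemma with the single line ``by direct computation,'' your argument (writing $f=g(x)+y\,h(x)$ and observing $\mathrm{Hess}(f)=-h'(x)^2$, so that the four conditions force $x_0$ to be a common double root of the quadratics $g$ and $h$, whence $f=(x-x_0)^2(c_{20}+c_{21}y)$) is just a clean organization of that same computation, including the correct handling of the vertex coefficients being nonzero. Your observation that the normal form $(y+1)(x\pm 1)^2$ is attained only after rescaling the variables in addition to multiplying by a nonzero constant is accurate and consistent with the paper's implicit torus-normalization convention.
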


\begin{proof}
By direct computation. 
\end{proof}

\begin{rem}[Known Results]\label{KR}
(1) \; Let 
$I \ge 0$, $s,t,u \ge 1$ 
be integers such that 
\[
0 \le I+(s-1)+(t-1)+(u-1) \le 2. 
\]
For each $(I;s,t,u)$, 
a triangle 
$\Delta_3(I;s,t,u)$ 
is uniquely determined up to 
$\mathrm{Aff}(\Z^2)$-equivalence 
as follows: 
\begin{align*}
\Delta_3(2;1,1,1)
&\simeq 
\mathrm{Conv}\{ (0,0), (3,2), (2,3) \}, \\
\Delta_3(1;2,1,1)
&\simeq 
\mathrm{Conv}\{ (0,0), (2,0), (1,2) \}, \\
\Delta_3(1;1,1,1)
&\simeq 
\mathrm{Conv}\{ (0,0), (1,2), (2,1) \}, \\
\Delta_3(0;3,1,1)
&\simeq 
\mathrm{Conv}\{ (0,0), (3,0), (0,1) \}, \\
\Delta_3(0;2,1,1)
&\simeq 
\mathrm{Conv}\{ (0,0), (2,0), (0,1) \}, \\
\Delta_3(0;1,1,1)
&\simeq 
\mathrm{Conv}\{ (0,0), (1,0), (0,1) \}.
\end{align*}
\noindent
(2)\;
\; 
For integers 
$I \in \{ 0,1 \}$, $s,t \ge 1$ such that 
\[
0 \le I+2(s-1)+2(t-1) \le 2, 
\]
a parallelogram 
$\Delta^{\mathrm{par}}_4(I;s,t)$
is uniquely determined up to 
$\mathrm{Aff}(\Z^2)$-equivalence 
as follows: 
\begin{align*}
\Delta^{\mathrm{par}}_4(1;1,1)
&\simeq 
\mathrm{Conv}\{ (0,0), (1,0), (1,2), (2,2) \}, \\
\Delta^{\mathrm{par}}_4(0;2,1)
&\simeq 
\mathrm{Conv}\{ (0,0), (2,0), (0,1), (2,1) \}, \\
\Delta^{\mathrm{par}}_4(0;1,1)
&\simeq 
\mathrm{Conv}\{ (0,0), (1,0), (0,1), (1,1) \}. 
\end{align*}
\noindent
(3)\;
The polytopes in this remark are not 1-tacnodal 
(By \cite[Lemma 4.2]{S} and Lemma \ref{nonisol}, or direct computation). 
\end{rem}

\begin{lem}[Describing some polytopes] \label{desc} \it
(1)\;
Let 
$I \ge 0$, $s,t,u \ge 1$ 
be integers such that 
\[
I+(s-1)+(t-1)+(u-1) =3.
\]
For each $(I;s,t,u)$, 
a triangle 
$\Delta_3(I;s,t,u)$
has the following isomorphisms: 
\begin{align*}
\Delta_3(3;1,1,1)
&\simeq 
\Delta_{\I}, \Delta_{\II}, \\
\Delta_3(2;2,1,1)
&\simeq 
\mathrm{Conv}\{ (0,0), (2,0), (1,3) \}, \\
\Delta_3(0;4,1,1)
&\simeq 
\mathrm{Conv}\{ (0,0), (0,1), (4,0) \}, \\
\Delta_3(0;2,2,2)
&\simeq 
\mathrm{Conv}\{ (0,0), (2,0), (0,2) \}. 
\end{align*}
\noindent
(2)\;
A quadrangle 
$\Delta_4(0;2,1,1,1)$
is uniquely determined as 
$ \mathrm{Conv}\{ (0,0), (2,0), (0,1), (1,1) \}$ 
up to $\mathrm{Aff}(\Z^2)$-equivalence. 
\end{lem}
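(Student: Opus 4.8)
The plan is to treat (1) and (2) separately, in each case reducing to a short finite enumeration of lattice‑polygon configurations and then fixing the surviving polytopes up to $\mathrm{Aff}(\Z^2)$ by the same coordinate techniques already used in Lemmata~\ref{unique} and \ref{expol}.

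For (1) I would first enumerate all admissible types. Writing $a=s-1$, $b=t-1$, $c=u-1\ge 0$, the equation $I+a+b+c=3$ with $I\ge 0$ has, up to permuting $(s,t,u)$, exactly the solutions $(3;1,1,1)$, $(2;2,1,1)$, $(1;3,1,1)$, $(1;2,2,1)$, $(0;4,1,1)$, $(0;3,2,1)$, $(0;2,2,2)$. Of these, $\Delta_3(3;1,1,1)$ is covered by Lemma~\ref{unique}(1), $\Delta_3(2;2,1,1)$ by Lemma~\ref{unique}(2) (which identifies it with $\mathrm{Conv}\{(0,0),(2,0),(1,3)\}$), and $\Delta_3(0;4,1,1)$ by Lemma~\ref{unique}(4) (which identifies it with $\Delta_{\V}$); the three types $\Delta_3(1;2,2,1)$, $\Delta_3(1;3,1,1)$, $\Delta_3(0;3,2,1)$ are ruled out by Lemma~\ref{polyt1}(1). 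Hence the only new case is $\Delta_3(0;2,2,2)$: by Pick's formula it has area $2$; mapping one length-$2$ edge onto the segment from $(0,0)$ to $(2,0)$ makes the third vertex $(a,b)$ with $b\ne 0$, and we may assume $b>0$; the area condition forces $b=2$ and the length conditions on the other two edges force $a$ even; then the shear $(x,y)\mapsto(x-\tfrac a2 y,\,y)$ fixes $(0,0)$ and $(2,0)$ and sends $(a,2)$ to $(0,2)$, giving $\mathrm{Conv}\{(0,0),(2,0),(0,2)\}$.

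For (2), let $P=\Delta_4(0;2,1,1,1)$ have vertices $A,B,C,D$ in cyclic order with $AB$ the length-$2$ edge, and split $P$ along the diagonal $AC$, of lattice length $\ell$, into $\triangle ABC$ and $\triangle ACD$. Since $P$ has no interior lattice point, the number of interior lattice points of $\triangle ABC$ plus that of $\triangle ACD$ plus $\ell-1$ equals $0$, which forces $\ell=1$ and both triangles to have no interior lattice point; hence $\triangle ABC=\Delta_3(0;2,1,1)$ and $\triangle ACD=\Delta_3(0;1,1,1)$. By Remark~\ref{KR}(1) I would realize $\triangle ABC$ as $\mathrm{Conv}\{(0,0),(2,0),(0,1)\}$ with $A=(0,0)$, $B=(2,0)$, $C=(0,1)$ (the two admissible labelings are exchanged by a symmetry of this triangle and give $\mathrm{Aff}(\Z^2)$-equivalent results). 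Then $D$ lies on the side of the line $AC$ opposite to $B$, i.e.\ $D=(x,y)$ with $x<0$, and the area of $\triangle ACD$ forces $x=-1$; the edges $CD$ and $DA$ have lattice length $1$ for every $y$, so the only remaining constraint is convexity of $ABCD$, and comparing the cross products of consecutive edge vectors pins $D$ down to $(-1,1)$. Finally the shear $(x,y)\mapsto(x+y,\,y)$ carries $\mathrm{Conv}\{(0,0),(2,0),(0,1),(-1,1)\}$ onto $\mathrm{Conv}\{(0,0),(2,0),(1,1),(0,1)\}$, the asserted polytope; the other diagonal $BD$ leads to a symmetric situation and the same conclusion.

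I expect the main obstacle to be bookkeeping rather than any conceptual point: in (2) one must be sure that the two diagonal decompositions genuinely exhaust the possibilities and that the convexity inequalities eliminate all but one value of $y$, and in (1) one must check that the list of types $(I;s,t,u)$ is complete and that every excluded type is indeed covered by a previously proved lemma. No new ideas beyond those already deployed in Section~3 should be required.
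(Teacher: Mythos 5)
Your proposal is correct. For part (1) it follows the paper's route almost verbatim: the first three cases are quoted from Lemma~\ref{unique}, the non-existent types are disposed of by Lemma~\ref{polyt1} (the paper leaves this implicit by simply not listing them), and $\Delta_3(0;2,2,2)$ is normalized by sending a length-$2$ edge to the segment from $(0,0)$ to $(2,0)$, using Pick's formula to force the third vertex to height $2$ with even abscissa, and shearing --- exactly the paper's computation in different coordinates. For part (2) you take a slightly leaner route than the paper: the paper enumerates two possible splittings of $\Delta_4(0;2,1,1,1)$ along a length-$1$ segment (triangle plus triangle, and triangle plus parallelogram) and identifies the resulting quadrangle in each case, whereas you observe that the vertex-to-vertex diagonal always gives the first splitting (the absence of interior lattice points forces the diagonal to have lattice length $1$ and both triangles to be interior-point-free), normalize $\Delta_3(0;2,1,1)$ via Remark~\ref{KR}, and then pin down the fourth vertex by the area and convexity inequalities, which correctly yield $D=(-1,1)$ and, after the shear $(x,y)\mapsto(x+y,y)$, the asserted quadrangle. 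This single-case argument makes the paper's second splitting type unnecessary, at the cost of the small explicit checks you already supply (the lattice symmetry exchanging the endpoints of the long edge, and the cross-product inequalities $0<y<3/2$); both arguments are complete and rest on the same tools.
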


\begin{proof}
(1)\;
These claims, except the last case, 
are the same as Lemma \ref{unique}.
We prove the last one. 
Without loss of generality, 
the polytope can be assumed to be 
\[ 
\mathrm{Conv}\{ (p,0), (p+2,0) ,(0,q) \}. 
\]
From Pick's formula, we obtain $q=2$ and 
$p=2k$ for some $k \in \Z$. 
Thus, by the isomorphism
\[
\begin{pmatrix}
1 & k \\
0 & 1 \\
\end{pmatrix}
: \Z^2 \to \Z^2,
\]
it is mapped to the polytope 
$\mathrm{Conv}\{ (0,0), (2,0), (0,2) \}$. 

\vspace{2mm}
\noindent
(2)\;
We can split 
$P=\Delta_4(0;2,1,1,1)$
into two polytopes $Q$, $R$ which are either \\
\textbullet \; 
$Q=\Delta_3(0;2,1,1)$, $R=\Delta_3(0;1,1,1)$
and these polytopes share an edge of length $1$, or\\
\textbullet \; 
$Q=\Delta_3(0;1,1,1)$, $R=\Delta_4(0;1,1,1,1)$
and these polytopes share an edge of length $1$. 

In the former case, we can assume that 
$Q$ is 
\[
\mathrm{Conv}\{ (0,0), (1,0), (0,2) \}
\]
and the common edge is its bottom edge. 
Then the last vertex of $P$ must be 
$(1,-1)$. 
In the latter case, 
we can assume that 
$R$ is 
\[
\mathrm{Conv}\{ (0,0), (1,0), (0,1), (1,1) \}
\]
and the common edge is its bottom edge. 
Then the last vertex of $P$ must be either 
$(0,-1)$, or $(1,-1)$. 
All of them are equivalent to 
\[
\mathrm{Conv}\{ (0,0), (2,0), (0,1), (1,1) \}. 
\]
\end{proof}

\begin{lem}[Non 1-tacnodal polytopes]\label{nonreg}\it
The following polytopes are not 1-tacnodal polytopes: \\
(1)\;
$\Delta_3(0;2,2,2)$, \\
(2)\;
$\Delta_3(0;4,1,1)$, \\
(3)\;
$\Delta_3(2;2,1,1)$, \\
(4)\;
$\Delta_4(0;2,1,1,1)$, \\
(5)\;
$\mathrm{Conv}\{ (1,0), (0,1), (2,1), (1,3) \}$. 
\end{lem}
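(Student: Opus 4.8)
The plan is to fix, via Lemma~\ref{desc}, an explicit $\mathrm{Aff}(\Z^2)$-representative of each of the five polytopes and then to show that no $f\in\mathcal F(P)$ can define a curve $V_f$ having an isolated $A_3$-point inside the maximal torus $(\C^*)^2$; once condition~(S1) is seen to be unattainable, $P$ is not $1$-tacnodal. I would split the five cases into two groups according to the shape of a general $f$.

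\emph{Cases (1), (2), (4): $V_f$ carries no isolated $A_3$-point in the torus at all.} For (1), $\Delta_3(0;2,2,2)\simeq\mathrm{Conv}\{(0,0),(2,0),(0,2)\}$, so $f$ is a quadratic; then every third- and fourth-order partial of $f$ vanishes identically, hence $a_{12}(p)=a_{04}(p)=0$ for every $p$, which violates condition~(4) of Lemma~\ref{tacnode}, while $f_{xx}\equiv2c_{20}\ne0$ makes that lemma applicable. (Geometrically: a singular conic is a pair of lines, with a node, or a double line, with non-isolated singular locus.) For (2) and (4), $\Delta_3(0;4,1,1)\simeq\mathrm{Conv}\{(0,0),(4,0),(0,1)\}$ and $\Delta_4(0;2,1,1,1)\simeq\mathrm{Conv}\{(0,0),(2,0),(0,1),(1,1)\}$ by Lemma~\ref{desc}, and in both every lattice point has second coordinate $\le1$; hence $f=A(x)+B(x)\,y$ is affine-linear in $y$, so $V_f\cap(\C^*)^2$ is the smooth graph $y=-A(x)/B(x)$ wherever $B(x)\ne0$, and if $A$ and $B$ share a (necessarily nonzero, since $B(0)=c_{01}\ne0$) root $x_*$ then $V_f$ contains the line $\{x=x_*\}$ whose residual curve is again a line, so that $V_f$ has at most a node. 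Neither possibility is an $A_3$-point.

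\emph{Cases (3), (5): a short elimination via Lemma~\ref{tacnode}, using that $f_{xxx}\equiv0$.} For (3), $\Delta_3(2;2,1,1)\simeq\Delta_{\III}=\mathrm{Conv}\{(0,0),(2,0),(1,3)\}$, so $f=a(x)+x\,b(y)$ with $a(x)=c_{00}+c_{10}x+c_{20}x^2$, $b(y)=c_{11}y+c_{12}y^2+c_{13}y^3$, and $c_{20},c_{13}\ne0$; since $f_{xx}\equiv2c_{20}\ne0$ the lemma applies at any torus point $p=(x_0,y_0)$, and there $f_y(p)=x_0b'(y_0)=0$ with $x_0\ne0$ forces $f_{xy}(p)=b'(y_0)=0$. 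As $f_{xxx}\equiv0$, the expression for $K$ then collapses to $K(f)(p)=f_{xx}(p)^3f_{yyy}(p)=(2c_{20})^3\cdot6c_{13}x_0\ne0$, contradicting condition~(3) of Lemma~\ref{tacnode}. For (5), writing the general polynomial on $\mathrm{Conv}\{(1,0),(0,1),(2,1),(1,3)\}$ as $f=c_{21}x^2y+x\,h(y)+c_{01}y$ with $h(y)=c_{10}+c_{11}y+c_{12}y^2+c_{13}y^3$ and $c_{21},c_{01},c_{13}\ne0$, one again has $f_{xxx}\equiv0$ and $f_{xx}(p)=2c_{21}y_0\ne0$, while $f(p)=f_x(p)=f_y(p)=0$ yields successively $h(y_0)=-2c_{21}x_0y_0$, then $c_{21}x_0^2=c_{01}$, then $h'(y_0)=-2c_{01}/x_0$, and finally $f_{xy}(p)=2c_{21}x_0+h'(y_0)=0$; hence again $K(f)(p)=f_{xx}(p)^3f_{yyy}(p)=(2c_{21}y_0)^3\cdot6c_{13}x_0\ne0$, contradicting Lemma~\ref{tacnode}(3).

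The only genuinely computational cases are (3) and (5), and the computation there is short precisely because the Newton polytopes are narrow in the $x$-direction, so that once $f_{xy}(p)=0$ is in hand $K(f)(p)$ reduces to the single, visibly nonzero term $f_{xx}(p)^3f_{yyy}(p)$ — no elimination of the remaining coefficients is needed. I expect the main care to go into bookkeeping: checking that $f_{xx}(p)\ne0$ so that Lemma~\ref{tacnode} applies (automatic here, since $f_{xx}$ is always a monomial carrying a vertex coefficient), separately excluding $f_{xx}(p)=0$ in cases (2) and (4) — there all second partials would vanish and the singular point would have multiplicity $\ge3$, hence not be $A_3$ — and verifying that the explicit representatives carry no unlisted lattice points, which is a routine enumeration of $\Delta_\Z$ for each polytope.
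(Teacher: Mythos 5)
Your proposal is correct and takes essentially the same route as the paper: reduce each polytope to its explicit normal form and rule out a torus tacnode via Lemma~\ref{tacnode}, and in cases (3) and (5) your observation that $f_{xy}(p)=0$ collapses $K(f)(p)$ to $f_{xx}(p)^3f_{yyy}(p)\neq 0$ is exactly the paper's computation (its $K(f)=48x$ in case (3)). The only cosmetic differences are in (1), (2) and (4), where the paper argues even more briefly (the Milnor number of a conic singularity is at most $1$; $f_y$ is a nonzero constant; $\mathrm{Hess}(f)\equiv-c_{11}^2\neq0$), while your condition-(4) and graph/line-decomposition arguments reach the same conclusions.
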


\begin{proof}
(1)\;
This is by the fact that 
the Milnor number of an isolated singularity 
of a projective conic does not exceed $1$. 

\vspace{2mm}
\noindent
(2)\;
Notice that this polytope is uniquely determined as 
$\mathrm{Conv}\{ (0,0), (0,1), (4,0) \}$. 
Then a polynomial $f$ with this Newton polytope has no singularity
since $f_y$ is a non-zero constant. 

\vspace{2mm}
\noindent
(3)\;
We assume that a polynomial 
\[ 
f:=1+Ax+x^2+Bxy+Cxy^2+xy^3 \in \mathcal{F}(\Delta_{3}(2;2,1,1))
\]
satisfies the condition (S1). 
Since the polynomial $f$ satisfies $f_{xx} \neq 0$, 
the system $f=f_x=f_y=\mathrm{Hess}(f)=K(f)=0$ must have a solution. 
But, we obtain $K(f)=48x$. This is a contradiction. 

\vspace{2mm}
\noindent
(4)\;
Notice that this polytope is uniquely determined as 
$\mathrm{Conv}\{ (0,0), (2,0), (0,1), (1,1) \}$. 
We set a polynomial $f$ as 
\[
f:=c_{00}+Ax+c_{20}x^2+c_{01}y+c_{11}xy \in 
\mathcal{F}(\mathrm{Conv}\{ (0,0), (2,0), (0,1), (1,1) \}). 
\]
The hessian of $f$ is $-c_{11}^2 \neq 0$.

\vspace{2mm}
\noindent
(5)\; 
We assume that a polynomial
\[
f:=c_{10}x+c_{01}y+Axy+c_{21}x^2y+Bxy^2+c_{13}xy^3
\]
defines a 1-tacnodal curve. 
Then, since $f_{xx}$ is non-zero, we can apply~Lemma \ref{tacnode} and 
obtain 
\[
4c_{01}x(c_{13}y^3+c_{10})=-4c_{01}c_{13}xy^3=0. 
\] 
This is a contradiction. 
\end{proof}

Set 
\begin{align*}
\hat{\Delta}_1
&=\mathrm{Conv}\{ (2,0),(0,1),(0,-1) \}, \\
\hat{\Delta}_2
&=\mathrm{Conv}\{ (2,0),(0,2),(0,-1) \}, \\
\hat{\Delta}_3
&=\mathrm{Conv}\{ (3,0),(0,1),(0,-1) \}, 
\end{align*}
see Figure~4.

\begin{figure}[htbp]\label{hats}
\includegraphics[scale=0.8]{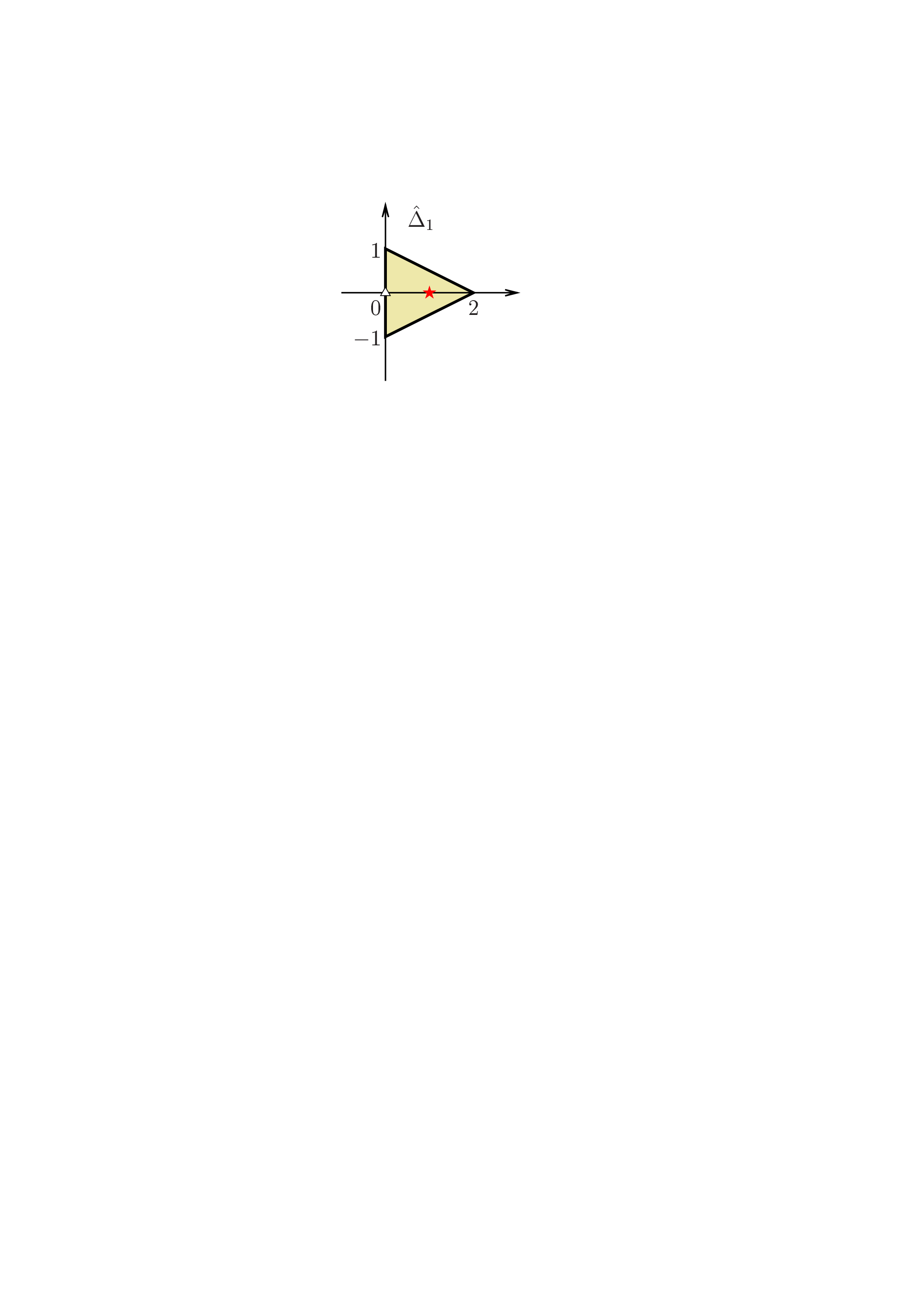}
\includegraphics[scale=0.8]{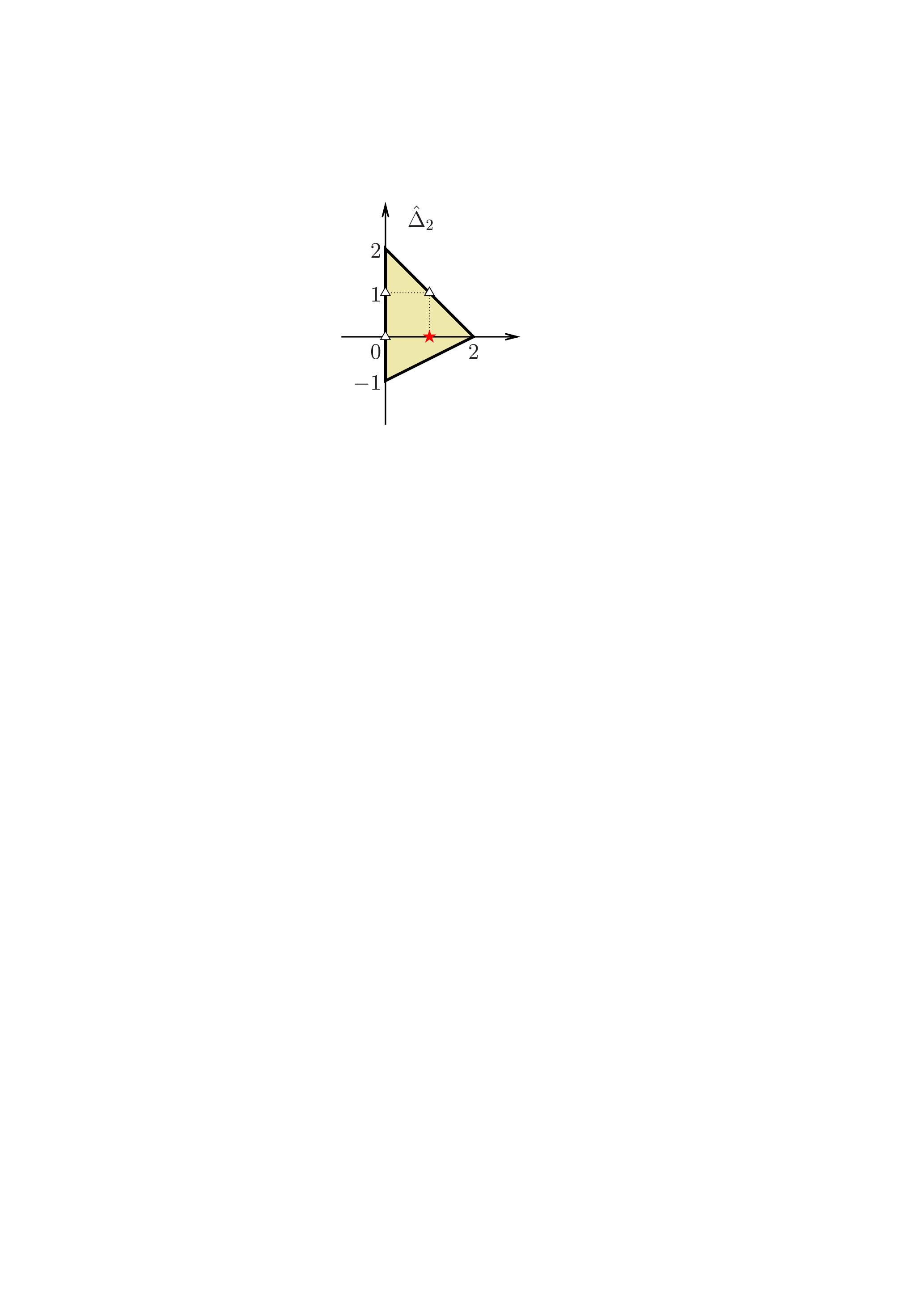}
\includegraphics[scale=0.8]{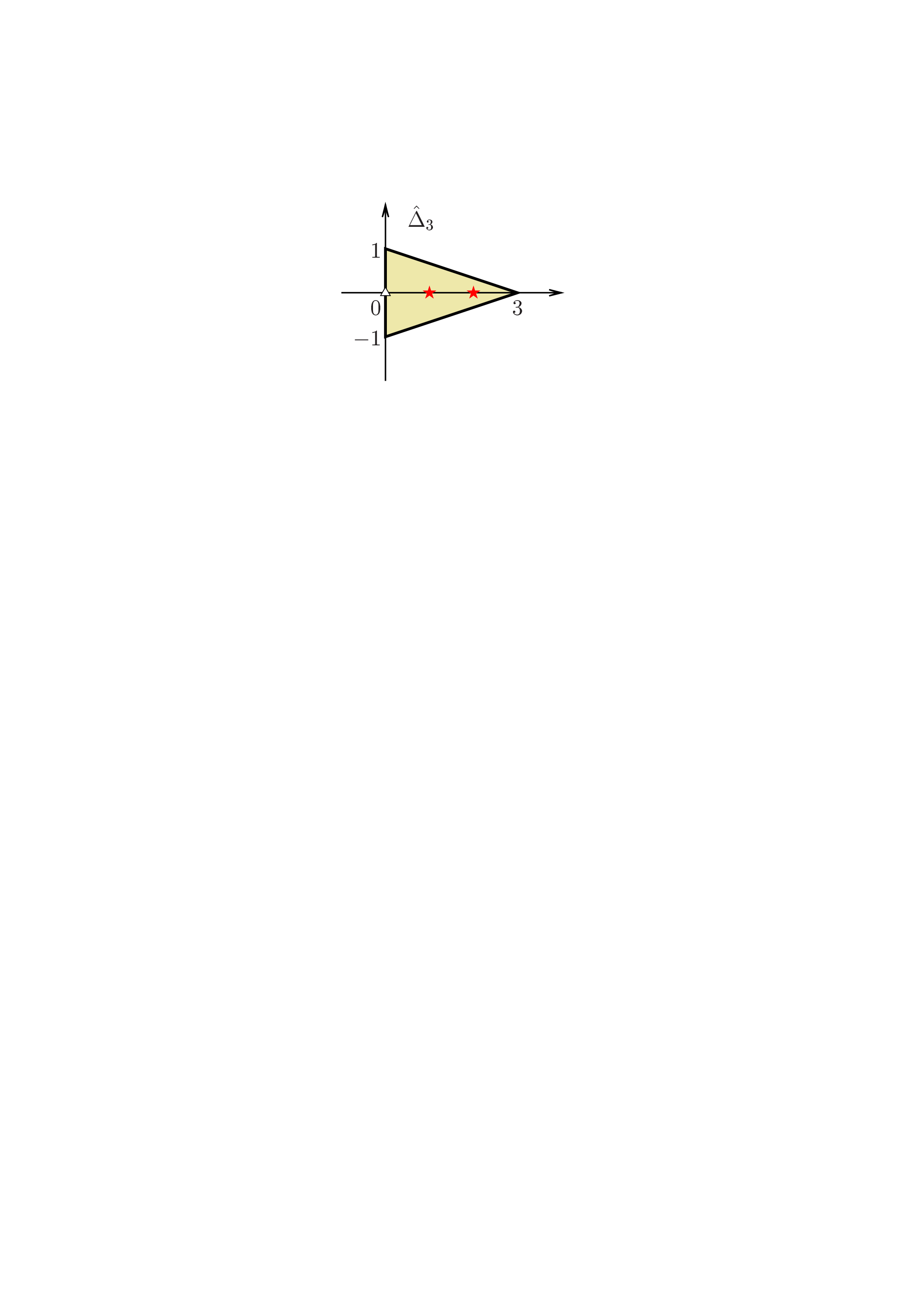}
\caption{
Polytopes $\Hat{\Delta}_{1}, \Hat{\Delta}_{2}$ and $\Hat{\Delta}_{3}$. 
The notation {\tiny $\triangle$} means a lattice point on the boundary 
which is not a vertex 
and the notation \textcolor{red}{$\star$} means an interior lattice point.
}
\end{figure}

\begin{lem}[Non 1-tacnodal edges]\label{nonirr}\it
The following edges $\sigma$ are not 1-tacnodal edges:\\
(1)\;
the edge 
$\Delta_3(0;2,1,1) \cap \Delta_3(0;2,1,1)$
of length $2$, \\
(2)\;
the edge 
$\Delta_3(1;2,1,1) \cap \Delta_3(0;2,1,1)$ 
of length $2$ 
and the edge 
$\Delta_3(1;2,1,1) \cap \Delta_4(0;2,1,1,1)$
of length $2$, \\
(3)\;
the edge 
$\Delta_3(0;3,1,1) \cap \Delta_3(0;3,1,1)$
of length $3$, \\
(4)\;
the edge 
$\Delta_4(0;2,1,1,1) \cap \Delta_3(0;2,1,1)$
of length $2$, \\
(5)\;
the edge 
$\Delta_4(0;2,1,1,1) \cap \Delta_4(0;2,1,1,1)$ 
of length $2$, \\
(6)\;
the edge 
$\Delta^{\mathrm{par}}_4(0;2,1) \cap \Delta_3(0;2,1,1)$
of length $2$ 
and the edge 
$\Delta^{\mathrm{par}}_4(0;2,1) \cap \Delta_4(0;2,1,1,1)$ 
of length $2$, \\
(7)\;
the edge 
$\Delta_3(0;2,2,2) \cap \Delta_3(0;2,1,1)$ 
of length $2$. 
\end{lem}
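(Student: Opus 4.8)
Here is a plan for the proof of Lemma~\ref{nonirr}.

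The strategy is to argue by contradiction through the refinement machinery of Subsection~\ref{troplicalization}. If one of the listed edges $\sigma = P_1 \cap P_2$ were $1$-tacnodal, there would be a pair $(f_1, f_2) \in \mathcal{F}(P_1) \times \mathcal{F}(P_2)$ with $f_1^{\sigma} = f_2^{\sigma}$, each of $V_{f_1}, V_{f_2}$ having a smooth point or an isolated singular point at some $z \in X(\sigma)$, together with a deformation pattern $\phi \in \mathcal{F}(\Delta_z)$ compatible with $(f_1, f_2, z)$ defining a $1$-tacnodal curve on $X(\Delta_z)$. The first step is to pin down the exceptional polytope $\Delta_z = \mathrm{Conv}\{(m,0),(0,m_1),(0,-m_2)\}$ in each case: here $m$ is the length of $\sigma$ (so $m = 2$ except $m = 3$ in case~(3)) and $m_1, m_2$ are read off the Newton diagrams of $f_1, f_2$ at $z$. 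Each of $\Delta_3(0;2,1,1)$, $\Delta_3(0;3,1,1)$, $\Delta_3(0;2,2,2)$, $\Delta_4(0;2,1,1,1)$ (which equals $\mathrm{Conv}\{(0,0),(2,0),(0,1),(1,1)\}$ by Lemma~\ref{desc}(2)) and $\Delta^{\mathrm{par}}_4(0;2,1)$ has no interior lattice point, so the curves it carries are rational and either smooth or at worst nodal: Lemma~\ref{nonreg}(1) bounds the Milnor number of an isolated singularity of a conic by $1$, and Lemma~\ref{nonisol} shows any worse-than-$A_2$ singularity on the parallelogram forces the non-isolated form $(y+1)(x\pm1)^2$. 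Combining this with the explicit normal forms of Lemma~\ref{singedge} on the $\Delta_3(1;2,1,1)$-side, a short case analysis gives $\Delta_z = \hat{\Delta}_1$ in cases~(1), (4), (5), (6), (7); $\Delta_z = \hat{\Delta}_2$ in case~(2); and $\Delta_z = \hat{\Delta}_3$ in case~(3).

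The second step is to show that no deformation pattern with Newton polytope $\hat{\Delta}_j$ is $1$-tacnodal. For $\hat{\Delta}_1$ and $\hat{\Delta}_3$ this is immediate: the monomial $x^{m-1}$ corresponds to an interior lattice point of $\Delta_z$, so condition~(a) of Definition~\ref{defpat} merely deletes it without shrinking the Newton polytope, and the remaining lattice points of $\Delta_z$ carry no monomial $x^i y^j$ with $i \ge 1$ and $j \ne 0$. Hence $\phi_{xy} \equiv 0$ and $\mathrm{Hess}(\phi) = \phi_{xx}\phi_{yy}$, while $\phi_{yy} = 2c\,y^{-3}$ with $c$ the coefficient of the vertex $(0,-1)$. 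If $\phi_{xx}(p) \ne 0$ at the tacnode $p$, then Lemma~\ref{tacnode} forces $\mathrm{Hess}(\phi)(p) = 0$, hence $\phi_{yy}(p) = 0$, hence $c = 0$ (as $y_p \ne 0$), contradicting $N_{\phi} = \Delta_z$; if $\phi_{xx}(p) = 0$ (possible only for $\hat{\Delta}_3$, when $p$ lies on $\{x = 0\}$) we apply the $x\leftrightarrow y$ analogue of Lemma~\ref{tacnode}, and since $\phi_{xy} \equiv 0$ and $\phi_{yy}(p) \ne 0$ the corresponding analogue of $K(f)$ equals $\phi_{yy}(p)^3\phi_{xxx}(p) \ne 0$, again contradicting the tacnode conditions. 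For $\hat{\Delta}_2$ (case~(2)), after deleting $x^{m-1}$ the polynomial $\phi$ still contains the mixed monomial $c_{11}xy$, so the bare Hessian argument no longer closes; here one uses the truncation conditions~(b), which fix the coefficients along the edge $(2,0)$--$(0,2)$ from the normal form of $f_1$ in Lemma~\ref{singedge}(IV) and the coefficient of $(0,-1)$ from $f_2$. The resulting computation is the one already carried out for $\Delta_{\E}$ in Subsection~3.4: writing $\phi = 1 + A'y + x^2y + B'y^2 + xy^2 + \tfrac14 y^3 \in \mathcal{F}(\hat{\Delta}_2)$, one checks $\phi_{xx} \ne 0$ and finds via Lemma~\ref{tacnode} that the system $\phi = \phi_x = \phi_y = \mathrm{Hess}(\phi) = K(\phi) = 0$ reduces to $8B'x = 0$, with both $x = 0$ and $B' = 0$ incompatible with $\phi = 0$. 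The two sub-edges listed in case~(2) (partners $\Delta_3(0;2,1,1)$ and $\Delta_4(0;2,1,1,1)$) both produce $\hat{\Delta}_2$ and the same contradiction.

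I expect the main obstacle to be the first step, namely verifying that the exceptional polytope is always one of $\hat{\Delta}_1$, $\hat{\Delta}_2$, $\hat{\Delta}_3$: this amounts to excluding higher-order tangency of $V_{f_i}$ with $X(\sigma)$ and more degenerate isolated singularities of $V_{f_i}$ at $z$ that would enlarge $m_1$ or $m_2$, which is precisely where the genus/rationality bounds on the small polytopes, Lemma~\ref{nonisol}, Lemma~\ref{nonreg}(1) and the normal forms of Lemma~\ref{singedge} are needed. Once $\Delta_z$ is identified, the second step is the routine application of Lemma~\ref{tacnode} sketched above.
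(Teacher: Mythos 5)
Your overall strategy is the paper's own for the cases it proves directly (the paper dispatches (1), (3), (4), (5) by citing Shustin's Lemmas 3.9, 3.10 and 4.4, and argues (2), (6), (7) exactly as you propose: identify $\Delta_z$, then kill the deformation pattern via Lemma~\ref{tacnode}), and your uniform observation for $\hat{\Delta}_1$ and $\hat{\Delta}_3$ --- that after deleting $x^{m-1}$ no mixed monomial survives, so $\mathrm{Hess}(\phi)=\phi_{xx}\phi_{yy}$ cannot vanish unless the vertex coefficient at $(0,-m_2)$ does --- is a nice, correct shortcut. The genuine gap is in your first step. The blanket claim ``$\Delta_z=\hat{\Delta}_1$ in cases (1), (4), (5), (6), (7) and $\hat{\Delta}_3$ in case (3)'' is false as stated: the curves on the partner polytopes need not be smooth at $z$. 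In case (7) the conic on $\Delta_3(0;2,2,2)$ may have a node at $z$ (the paper's sub-case $B-C\epsilon=0$, $f=X^2+CXY+Y^2$), which gives $m_1=2$ and $\Delta_z=\hat{\Delta}_2$; the sides $\Delta_4(0;2,1,1,1)$ and $\Delta^{\mathrm{par}}_4(0;2,1)$ in (4), (5), (6) can likewise carry an isolated node at $z$ (a reducible local picture, e.g.\ $(x-\xi)\bigl(c_{20}(x-\xi)+c_{11}y\bigr)$), enlarging $m_i$; and in case (3) the common cubic truncation may vanish at $\xi$ only to order $2$, so $m$ is the multiplicity of the root at $z$, not the lattice length of $\sigma$. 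In these sub-cases the exceptional polytope ($\hat{\Delta}_2$, or $\mathrm{Conv}\{(2,0),(0,2),(0,-2)\}$ in (5)) has slant edges carrying mixed monomials with prescribed nonzero coefficients, so $\phi_{xy}\not\equiv 0$ and your step-2 Hessian argument does not apply; these are precisely the configurations covered by the computations the paper imports from Shustin and by its own case-by-case calculations, and your plan has no replacement for them.

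There is also a concrete error in your case (2). The truncation data you impose on $\hat{\Delta}_2$ is the one from the $\Delta_{\E}$ discussion of Subsection~3.4, namely $\phi=1+A'y+x^2y+B'y^2+xy^2+\tfrac14 y^3$ with the contradiction $8B'x=0$; that pattern corresponds to a quadratic part which is a perfect square $\bigl(X\pm\tfrac12 Y\bigr)^2$, i.e.\ to the cusp-type point arising on $\Delta_4(1;2,1,1,1)$. In case (2) the polynomial on the $\Delta_3(1;2,1,1)$ side is $(x+\epsilon)^2+xy^2$ (this is where Lemma~\ref{singedge}(IV) is actually used), whose singularity at $z$ is a node with tangent cone $X^2-\epsilon y^2$, not a perfect square; condition (b) of Definition~\ref{defpat} then forces the $xy^2$-coefficient of $\phi$ to vanish, and the correct pattern is $\phi=1+A'y+\epsilon x^2y+B'y^2+y^3$, for which Lemma~\ref{tacnode} yields the contradiction $48y^3=0$ (this is the paper's computation). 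So your case-(2) argument, as written, analyzes the wrong deformation pattern, and the same care is needed for the second partner $\Delta_4(0;2,1,1,1)$ listed in (2). The conclusion of the lemma is of course still true, but your proposal needs the missing sub-case analysis of $\Delta_z$ and the corrected truncation data before it closes.
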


\begin{proof}
The assertion for cases 
(1), (3), (4) and (5) 
are already proved in \cite[Lemma 3.9, 3.10 and 4.4]{S}.
Here we only prove (2), (6) and (7). \\
(2)\; 
Set 
$P:=\Delta_3(1;2,1,1), Q:=\Delta_3(0;2,1,1)$. 
It is easy to check that 
a curve in $|D(P)|$ cannot have an isolated singularity
more complicated than $A_1$. 
Also, we can easily check that 
if a curve $V_f$ intersects $X(\sigma)$ at two points 
then the points are smooth points of $V_f$ and 
those intersections are transversal.
Therefore we can set 
\[ 
f:=(x+\epsilon)^2+Axy+xy^2 \in \mathcal{F}(P),  
\]
where $\epsilon = \pm 1$ 
and suppose that $f$ defines 
a curve which has an $A_1$-singularity on $X(\sigma) \subset X(P)$. 
With a simple calculation, we obtain $A=0$. 
The polynomial corresponding to the polytope $Q$ becomes 
\[
f':=(x+\epsilon)^2+y \in \mathcal{F}(Q). 
\] 
Then the exceptional polytope in this case is 
$\hat{\Delta}_2$. 
According to the explanation of a deformation pattern in 
Definition~\ref{defpat},
we set 
\[
\phi:=1+A'y+\epsilon x^2y+B'y^2+y^3 \in \mathcal{F}(\hat{\Delta}_2). 
\]
In the case $\epsilon=1$, we get $\phi_{xx}\ne 0$ by $y\ne 0$.
Using Lemma \ref{tacnode}, 
we obtain $48y^3=0$,  
but this is a contradiction. 
We also have a contradiction in the case $\epsilon=-1$.

\vspace{2mm}
\noindent
(6)\;
Set 
$P:=\Delta^{\mathrm{par}}_4(0;2,1), Q:=\Delta_3(0;2,1,1)$. 
For $P$, 
we set 
\[
f:=(\epsilon+x)^2 + (1+Ax+x^2)y
\in \mathcal{F}(P),  
\]
where $\epsilon= \pm 1$. 
Then a polynomial corresponding to $Q$ 
must be 
\[
f':=(\epsilon+x)^2+y
\in \mathcal{F}(Q).
\]
Then the exceptional polytope in this case is 
$\hat{\Delta}_1$. 

If $\epsilon=1$, $\phi$ is given as  
\[
\phi:=1+x^2y+y^2+A'y
\in \mathcal{F}(\hat{\Delta}_1), 
\]
and we can easily check that the solution of the system 
$\phi=\phi_x=\phi_y=\mathrm{Hess}(\phi)=0$ 
does not exist. 
The case $\epsilon=-1$ can be proved by the same argument.

\vspace{2mm}
\noindent
(7)\;
Set 
$P:=\Delta_3(0;2,2,2), Q:=\Delta_3(0;2,1,1)$. 
Without loss of generality, 
we can assume that $P$ and $Q$ are 
\[
P=\mathrm{Conv}\{(0,0), (2,0), (0,2)\}, 
\quad 
Q=\mathrm{Conv}\{(0,0), (2,0), (0,-1)\}. 
\] 
For $P$, 
we set 
\[
f:=
1+ 2 \epsilon x +x^2 +By+y^2+Cxy 
\in \mathcal{F}(P),  
\]
where $\epsilon= \pm 1$. 
Applying the new coordinates $(X,Y):=(x+\epsilon, y)$ for $f$, 
we obtain 
\[
f=X^2+(B-C\epsilon)Y+CXY+Y^2. 
\]
Notice that, if $\mathrm{Hess}(f)=C^2-4 =0$, 
$f$ defines a line of multiplicity $2$, 
that is, $f$ has non-isolated singularity. 
Therefore we may assume $C^2-4 \neq 0$. 
If $B-C\epsilon \neq 0$, 
the exceptional polytope in this case is $\hat{\Delta}_1$. 
If $B-C\epsilon =0$, then $(\epsilon,0)\in \C^2$ is an $A_1$-singularity, 
i.e., $f$ has the form $f=X^2+CXY+Y^2$. 
Hence, the exceptional polytope in this case is 
$\hat{\Delta}_2$. 
The conclusion is derived by the same calculation as 
in (7) for the former case and 
in (2) for the latter case, respectively.
\end{proof}

\begin{rem}[On an edge of length $1$] \label{onelength}
Let $\Delta_1, \Delta_2$ be polytopes such that 
their intersection $\sigma:=\Delta_1 \cap \Delta_2$ 
is an edge of length $1$. 
The edge $\sigma$ is NOT an $1$-tacnodal edge. 
Actually, we can prove it as follows: 
For integers $m_1, m_2 >0$ and 
the triangle 
\[
\hat{\Delta}:=
\mathrm{Conv}\{(1,0), (0,m_1), (0,-m_2)\}, 
\]
a polynomial 
$\phi \in \mathcal{F}(\hat{\Delta})$ 
can be given as 
\[
\phi=1+\psi(y)+xy^{m_2}, 
\]
where $\psi \in \C[y]$ is a polynomial in $y$ 
which satisfies $\psi(0)=0$. 
If the polynomial $\phi$ defines a singular curve, 
then $\phi=\phi_x=\phi_y=0$ at the singular point. 
By $\phi_{x}=y^{m_2}=0$, 
the singular point satisfies $y=0$. 
However it satisfies 
$\phi(x,0) \neq 0$ and this is a contradiction. 
Therefore, any deformation pattern cannot define 
a $1$-tacnodal curve. 
\end{rem}

To prevent complication of the proof of the main theorem, 
we give the following auxiliary definition. 

\begin{defi}
The notation $\mathbb{T}_{-1}$ means the set 
of polytopes equivalent to $\Delta_3(1;1,1,1)$ 
and pairs of polytopes equivalent 
to the pair of 
$\Delta_3(0;2,1,1)$ and $\Delta'_3(0;2,1,1)$
such that their intersection 
$\Delta_3(0;2,1,1) \cap \Delta'_3(0;2,1,1)$ 
is a segment of length $2$. 

The notation $\mathbb{T}_{-2}$ means the set of polytopes 
equivalent to
$\Delta_3(2;1,1,1)$ 
and pairs of polytopes equivalent to either 
\begin{itemize}
\item
the pair of $\Delta_3(1;2,1,1)$ and $\Delta_3(0;2,1,1)$ 
such that their intersection 
$\Delta_3(1;2,1,1) \cap \Delta_3(0;2,1,1)$ 
is a segment of length $2$, 

\item 
the pair of $\Delta_3(0;3,1,1)$ and $\Delta'_3(0;3,1,1)$ 
such that their intersection 
$\Delta_3(0;3,1,1) \cap \Delta'_3(0;3,1,1)$ 
is a segment of length $3$. 

\end{itemize}
\end{defi}

The triple 
$\Delta_3(0;2,2,1)$, $\Delta_3(0;2,1,1)$ and $\Delta'_3(0;2,1,1)$ 
such that the intersections 
$\Delta_3(0;2,2,1) \cap \Delta_3(0;2,1,1)$ 
and 
$\Delta_3(0;2,2,1) \cap \Delta'_3(0;2,1,1)$ 
are segments of length $2$ does not exist by Lemma~\ref{polyt1}.

Note that, from the above discussion, 
these polytopes and their sharing edges are not 1-tacnodal.

\subsection{Case (A)}
Let $S$ be the dual subdivision of $T_F$. 
We assume that 
$S$ is a TP-subdivision and satisfies 
$\sharp \partial \Delta_{\Z} - \sharp(V(S)\cap \partial \Delta)=0$. 
Then $d(S)=0$ by Lemma \ref{rklem}. 
Thus 
\[ \rk(T_F)=\rkexp(T_F)=\sharp \Delta_{\Z}-4. \] 
By the definition of $\rkexp(T_F)$, we get
\begin{align*}
\sharp \Delta_{\Z}-4
&=\sharp V(S)-1-\sum_{k=1}^N(\sharp V(\Delta_k)-3)\\
&=\sharp V(S)-1-N_4'.
\end{align*}
Since $\sharp V(S) \le \sharp \Delta_{\Z}$, 
we obtain $0 \le N_4' \le 3$. 

\begin{itemize}
\item[(A-0)]
If $S$ satisfies $N_4'=0$, 
then it satisfies $\sharp V(S)=\sharp \Delta_{\Z} -3$ and 
consists of triangles. 
Then, the subdivision $S$ must contain exactly one of 
the following polytopes:
\begin{itemize}
\item[(i)] 
$\Delta_3(3;1,1,1)$,
\item[(ii)] 
$\Delta_3(2;1,1,1)$ with one of $\mathbb{T}_{-1}$, 
\item[(iii)]
$\Delta_3(2;2,1,1)$ and $\Delta_3(0;2,1,1)$ 
such that their intersection is a segment whose length is $2$,
\item[(iv)]
$\Delta_3(1;2,1,1)$ and $\Delta_3(1;2,1,1)$ 
such that their intersection is a segment whose length is $2$,
\item[(v)]
$\Delta_3(1;2,1,1)$ and $\Delta_3(0;2,1,1)$ 
such that their intersection is a segment whose length is $2$
with one of $\mathbb{T}_{-1}$, 
\item[(vi)]
$\Delta_3(1;2,2,1)$, $\Delta_3(0;2,1,1)$ and $\Delta'_3(0;2,1,1)$ 
such that their intersections 
$\Delta_3(1;2,2,1) \cap \Delta_3(0;2,1,1)$ and 
$\Delta_3(1;2,2,1) \cap \Delta'_3(0;2,1,1)$
are segments whose lengths are $2$,
\item[(vii)]
$\Delta_3(0;2,2,1)$, $\Delta_3(0;2,1,1)$ and $\Delta_3(1;2,1,1)$ 
such that their intersections 
$\Delta_3(0;2,2,1) \cap \Delta_3(0;2,1,1)$ and 
$\Delta_3(0;2,2,1) \cap \Delta_3(1;2,1,1)$ 
are segments whose lengths are $2$, 
\item[(viii)]
$\Delta_3(0;2,2,2)$, $\Delta_3(0;2,1,1)$, $\Delta'_3(0;2,1,1)$ 
and $\Delta''_3(0;2,1,1)$ 
such that their intersections 
$\Delta_3(0;2,2,2) \cap \Delta_3(0;2,1,1)$, 
$\Delta_3(0;2,2,2) \cap \Delta'_3(0;2,1,1)$ and 
$\Delta_3(0;2,2,2) \cap \Delta''_3(0;2,1,1)$ 
are segments whose lengths are $2$, 
\item[(ix)]
$\Delta_3(1;3,1,1)$ and $\Delta_3(0;3,1,1)$ 
such that their intersection 
$\Delta_3(1;3,1,1) \cap \Delta_3(0;3,1,1)$ 
is a segment whose length is $3$, 
\item[(x)]
$\Delta_3(0;3,1,1)$ and $\Delta'_3(0;3,1,1)$ 
such that their intersection 
$\Delta_3(0;3,1,1) \cap \Delta_3(0;3,1,1)$ 
is a segment whose length is $3$, 
with one of $\mathbb{T}_{-1}$, 
\item[(xi)]
$\Delta_3(0;3,2,1)$, $\Delta_3(0;3,1,1)$ and $\Delta_3(0;2,1,1)$ 
such that their intersections 
$\Delta_3(0;3,2,1) \cap \Delta_3(0;3,1,1)$ and 
$\Delta_3(0;3,2,1) \cap \Delta_3(0;2,1,1)$ 
are segments whose lengths are $3$ and $2$, respectively, 
\item[(xii)]
$\Delta_3(0;4,1,1)$ and $\Delta'_3(0;4,1,1)$ 
such that their intersection 
$\Delta_3(0;4,1,1) \cap \Delta'_3(0;4,1,1)$ 
is a segment whose length is $4$, 
\item[(xiii)]
three of $\mathbb{T}_{-1}$, 
\item[(xiv)]
one of $\mathbb{T}_{-2}$ and 
one of $\mathbb{T}_{-1}$. 
\end{itemize}

\item[(A-1)]
If $S$ satisfies $N_4'=1$, 
then it satisfies $\sharp V(S)=\sharp \Delta_{\Z} -2$ and 
contains only one parallelogram in the following list and 
the rest of $S$ consists of triangles: 
\begin{itemize}
\item[(i)]
$\Delta^{\mathrm{par}}_4(2;1,1)$, 
\item[(ii)]
$\Delta^{\mathrm{par}}_4(0;2,1)$, 
$\Delta_3(0;2,1,1)$ and $\Delta'_3(0;2,1,1)$
such that their intersections 
$\Delta^{\mathrm{par}}_4(0;2,1) \cap \Delta_3(0;2,1,1)$ and 
$\Delta^{\mathrm{par}}_4(0;2,1) \cap \Delta'_3(0;2,1,1)$
are segments whose lengths are $2$, 
\item[(iii)]
$\Delta^{\mathrm{par}}_4(1;1,1)$ 
with one of $\mathbb{T}_{-1}$, 
\item[(iv)]
$\Delta^{\mathrm{par}}_4(0;1,1)$ 
with two of $\mathbb{T}_{-1}$, 
\item[(v)]
$\Delta^{\mathrm{par}}_4(0;1,1)$ 
with one of $\mathbb{T}_{-2}$. 
\end{itemize}

\item[(A-2)]
If $S$ satisfies $N_4'=2$, 
then it satisfies $\sharp V(S)=\sharp \Delta_{\Z} -1$ 
and contains exactly two parallelograms in the following list and 
the rest of $S$ consists of triangles: 
\begin{itemize}
\item[(i)]
$\Delta^{\mathrm{par}}_4(1;1,1)$, $\Delta^{\mathrm{par}}_4(0;1,1)$ 
\item[(ii)]
two $\Delta^{\mathrm{par}}_4(0;1,1)$ with one of $\mathbb{T}_{-1}$. 
\end{itemize}

\item[(A-3)]
If $S$ satisfies $N_4'=3$, 
then $\sharp V(S)=\sharp \Delta_{\Z}$ holds and 
$S$ contains exactly three parallelograms. 
Thus $S$ has three 
$\Delta^{\mathrm{par}}_4(0;1,1)$ 
and the rest of $S$ consists of triangles whose area is $1/2$.
\end{itemize}

In the above list, by Remark~\ref{KR} and Lemma~\ref{polyt1}, 
cases 
(vi), (vii), (ix), (xi) in (A-0) 
does NOT occur. 
Furthermore, 
the following cases 
do NOT have a regular singularity by Lemma~\ref{nonreg}: 
\begin{itemize}
\item
(ii), (v), (viii), (x), (xiii), (xiv) in (A-0), 

\item 
(ii), (iii), (iv), (v) in (A-1), 

\item 
(i), (ii) in (A-2),

\item
(A-3). 
\end{itemize}
Among them, 
the refinement of the following cases 
do NOT have an irregular singularity 
by Lemma~\ref{nonirr} and Remark~\ref{onelength}: 
\begin{itemize}
\item
(ii), (v), (viii), (x), (xiii), (xiv) in (A-0), 

\item
(ii), (iii), (iv), (v) in (A-1), 

\item
(i), (ii) in (A-2),

\item
(A-3). 

\end{itemize}
The remaining cases are (i), (iii), (iv) and (xii) in (A-0) 
and (i) in (A-1), and they correspond to the polytopes
$\Delta_{\I}, \Delta_{\II}, \Delta_{\III}, \Delta_{\IV}, \Delta_{\V}$ 
and $\Delta_{\VI}$,
respectively, by Lemma~\ref{unique}.
Moreover, by Lemma~\ref{tacpiece}, \ref{tacpiece2}, \ref{singedge} 
and \ref{refined}, 
these polytopes are $1$-tacnodal.

\subsection{Case (B)}
We assume that 
$S$ is a TP-subdivision and satisfies 
$\sharp \partial \Delta_{\Z} - \sharp(V(S)\cap \partial \Delta)=1$. 
By the latter condition, 
$S$ must have exactly one polytope $P \in S$ such that 
$P \cap \partial \Delta$ is a segment of length $2$.
By Lemma~\ref{rklem}, we get  
\[ \rk(T_F)=\rkexp(T_F)=\sharp \Delta_{\Z}-4. \] 
By the definition of $\rkexp(T_F)$, we obtain 
\begin{align*}
\sharp \Delta_{\Z}-4
&=\sharp V(S)-1-\sum_{k=1}^N(\sharp V(\Delta_k)-3)\\
&=\sharp V(S)-1-N_4'.
\end{align*}
Since $\sharp V(S) \le \sharp \Delta_{\Z}-1$, we have 
$0 \le N_4' \le 2$. 
\begin{itemize}

\item[(B-0)]
If $S$ satisfies $N_4'=0$, 
then $S$ satisfies 
$\sharp V(S)=\sharp \Delta_{\Z} -3$ 
and consists of triangles. 
Let $P \in S$ be a polytope which intersects $\partial \Delta$ as 
a segment of length $2$. 
Then $S$ satisfies one of the following: 
\begin{itemize}
\item[(i)]
$P=\Delta_3(0;2,1,1)$ 
and $S$ contains two of $\mathbb{T}_{-1}$ or one of $\mathbb{T}_{-2}$, 
\item[(ii)]
$P=\Delta_3(1;2,1,1)$ 
and $S$ contains one of $\mathbb{T}_{-1}$, 
\item[(iii)]
$P=\Delta_3(2;2,1,1)$, 
\item[(iv)]
$P=\Delta_3(0;2,2,2)$, 
\item[(v)]
$P=\Delta_3(0;2,2,1)$, 
and $S$ contains one of $\mathbb{T}_{-1}$, 
\item[(vi)]
$P=\Delta_3(1;2,2,1)$, 
\item[(vii)]
$P=\Delta_3(0;2,3,1)$.  
\end{itemize}

\item[(B-1)]
If $S$ satisfies $N_4'=1$, 
then $S$ satisfies 
$\sharp V(S)=\sharp \Delta_{\Z} -2$. 
Let $P \in S$ be a polytope which intersects $\partial \Delta$ as 
a segment of length $2$. 
Then $S$ satisfies one of the following:
\begin{itemize}
\item[(i)]
$P=\Delta^{\mathrm{par}}_4(0;2,1)$ and $\Delta_3(0;2,1,1)$ 
such that their intersection $P \cap \Delta_3(0;2,1,1)$ 
is a segment of length $2$,
\item[(ii)]
$P=\Delta_3(0;2,1,1)$ 
and $S$ contains $\Delta^{\mathrm{par}}_4(1;1,1)$,
\item[(iii)]
$P=\Delta_3(1;2,1,1)$ 
and $S$ contains $\Delta^{\mathrm{par}}_4(0;1,1)$,
\item[(iv)]
$P=\Delta_3(0;2,2,1)$ 
and $S$ contains $\Delta^{\mathrm{par}}_4(0;1,1)$,
\item[(v)]
$P=\Delta_3(0;2,1,1)$ 
and $S$ contains $\Delta^{\mathrm{par}}_4(0;1,1)$,
and one of $\mathbb{T}_{-1}$. 
\end{itemize}

\item[(B-2)]
If $S$ satisfies $N_4'=2$, 
then $S$ satisfies 
$\sharp V(S)=\sharp \Delta_{\Z}$ 
and contains exactly two parallelograms. 
Thus $P=\Delta_3(0;2,1,1)$ and 
$S$ contains two $\Delta^{\mathrm{par}}_4(0;1,1)$. 
\end{itemize}

In the above list, by Lemma~\ref{polyt1}, 
the following cases do NOT occur: 
\begin{itemize}
\item
(v), (vi), (vii) in (B-0),

\item
(iv) in (B-1). 
\end{itemize} 

Furthermore, the following cases do NOT have a regular singularity 
by Remark~\ref{KR} and Lemma~\ref{nonreg}:
\begin{itemize}
\item (i), (ii), (iv) in (B-0),
\item (i), (ii), (iii), (v) in (B-1),
\item (B-2).
\end{itemize}
Among them, (iv) in (B-0) does NOT have an irregular singularity 
by Lemma~\ref{nonirr} 
and the other polytopes except (iii) in (B-0) also do NOT have it 
since they have only one edge of length more than $1$, 
which should be on the boundary $\partial \Delta$, 
and this edge cannot be a $1$-tacnodal edge.
The remaining case is (iii) in (B-0) and this corresponds 
to the polytope $\Delta_{\III}$ by Lemma~\ref{unique}.
Moreover, by Lemma~\ref{nonreg} (3), 
this polytope is NOT $1$-tacnodal.

\subsection{Case (C)}
We assume that 
$S$ is NOT a TP-subdivision. 
Then 
\begin{align*}
d(S)
&=\sharp \Delta_{\Z}-4
-\bigl\{ \sharp V(S)-1 -\sum_{k=1}^N(\sharp V(\Delta_k)-3) \bigr\}\\
&=\sharp \Delta_{\Z}-\sharp V(S)-3+\sum_{k=1}^N(\sharp V(\Delta_k)-3)\\
&\ge -3+\sum_{k=1}^N(\sharp V(\Delta_k)-3) \\
&=\sum_{m \ge 3}(m-3)N_m -3. 
\end{align*}
By $0 \le d(S) \le \mathcal{N}_S/2$ due to Lemma~\ref{rklem}, 
we get 
\[
\sum_{m \ge 3}(m-3)N_m \le -\sum_{m \ge 2} N'_{2m}+5 
\;\;\; \text{and}\;\;\;
\sum_{m \ge 2} N'_{2m} \le 2.
\]
We decompose the proof into the following three cases: 
\begin{itemize}
\item[(C-0)]
$\sum_{m \ge 2} N'_{2m}=0$ and $\sum_{m \ge 3}(m-3)N_m \le 5$, 

\item[(C-1)]
$\sum_{m \ge 2} N'_{2m}=1$ and $\sum_{m \ge 3}(m-3)N_m \le 4$, 

\item[(C-2)]
$\sum_{m \ge 2} N'_{2m}=2$ and $\sum_{m \ge 3}(m-3)N_m \le 3$. 
\end{itemize}

\begin{itemize}
\item[(C-0)]
In this case, since $N_4+2N_5+3N_6+4N_7+5N_8 \le 5$ 
and 
$\sum_{m \ge 2} N'_{2m} =0$, 
possible patterns are the following: 
\begin{itemize}
\item[(i)]
$N_8=1$ and $N'_8=0$, 

\item[(ii)]
$N_7=1$, $N_4=0,1$ and $N'_4=0$,

\item[(iii)]
$N_6=1$, $N'_4, N'_6=0$ and 
$(N_4, N_5)=(0,0), (1,0), (2,0), (0,1)$, 

\item[(iv)]
$N_5=2$, $N_4=0,1$ and $N'_4=0$, 

\item[(v)] 
$N_5=1$, $N_4=0,1,2,3$ and $N'_4=0$, 

\item[(vi)]
$N_4=1,2,3,4,5$ and $N'_4=0$.  
\end{itemize}

In case (i), $N_8=1$ and $N'_8=0$.  
Since $\mathcal{N}_S=4$, 
we get $0 \le d(S) \le 2$. 
On the other hand, any octagon has two or more inner lattice points
(Lemma~\ref{poly2}), 
so 
\begin{align*}
d(S)
&=\sharp \Delta_{\Z}-4-\{ \sharp V(S)-1-5\}\\
&=\sharp \Delta_{\Z}-\sharp V(S)+2\\
&\ge 4. 
\end{align*}
This is a contradiction. 
Therefore case (i) does not occur. 
We can prove that the above cases except the cases 
(v) with $N_4=0$ and (vi) with $N_4=1,2$ 
do NOT occur by the same argument.


\vspace{2mm}
Next, we observe the remaining cases. 

\noindent
\underline{\textit{Case (v) with $N_4=0$.}} \; 
$S$ has exactly one pentagon and 
the rest of $S$ consists of triangles. 
Then $\rkexp(S)=\sharp V(S) -3$ holds. 
Therefore, the set $(\Delta \cap \Z^2) \setminus V(S)$ 
is exactly one lattice point. 
By Lemma~\ref{poly2} (2), 
the pentagon is 
$\Delta_5(1;1,1,1,1,1)$. 
This polytope is equivalent to 
$\Delta_{\VII}$ by Lemma~\ref{unique} (6). 
Moreover, by Lemma~\ref{tacpiece2}, 
the pentagon is a $1$-tacnodal polytope. 

\vspace{2mm}
\noindent
\underline{\textit{Case (vi) with $N_4=1$.}}\;
$S$ has exactly one non-parallel quadrangle and 
the rest of $S$ consists of triangles. 
Since $\rkexp(S)=\sharp V(S)-2$, 
the set 
$(\Delta \cap \Z^2) \setminus V(S)$
consists of two lattice points. 
Therefore, a possible non-parallel quadrangle $\Delta_4(I;s,t,u,v)$ 
is one of the following list: 
\begin{itemize}
\item[(a)] $\Delta_4(0;1,1,1,1)$, 
\item[(b)] $\Delta_4(0;2,1,1,1)$, 
\item[(c)] $\Delta_4(0;2,2,1,1)$, 
\item[(d)] $\Delta_4(1;1,1,1,1)$, 
\item[(e)] $\Delta_4(1;2,1,1,1)$, 
\item[(f)] $\Delta_4(2;1,1,1,1)$. 
\end{itemize}

Cases (a) and (c) 
do NOT occur by Lemma~\ref{poly2} and Lemma~\ref{polyt1}, respectively. 
The polytopes in (b) and (e) are NOT $1$-tacnodal polytopes 
by (4) of Lemma~\ref{nonreg} and Lemma~\ref{expol_nontac}, respectively. 
Also the polytope in (d) is NOT a $1$-tacnodal polytope 
by \cite[Lemma 4.2 (\textit{i})]{S}. 
Notice that, by Remark~\ref{onelength}, 
the polytope in (d) does NOT have a $1$-tacnodal edge. 

By Lemma~\ref{unique}, 
the polytope (f) is equivalent to one of 
\[ 
\Delta_{\VIII}, \quad \Delta_{\IX} \quad \text{and} \quad  
\mathrm{Conv}\{(1,0), (0,1), (2,1), (1,3) \}.  
\]
The polytopes $\Delta_{\VIII}$, $\Delta_{\IX}$
are $1$-tacnodal polytopes by Lemma~\ref{tacpiece2}. 
On the other hand, the polytope 
$\mathrm{Conv}\{(1,0), (0,1), (2,1), (1,3) \}$
is NOT a $1$-tacnodal polytope by Lemma~\ref{nonreg} (5) 
and does NOT have a $1$-tacnodal edge by Remark~\ref{onelength}.

If $S$ contains the polytope in (b), 
since $\rkexp(S)=\sharp \Delta_{\Z} -3$, 
the adjacent polytope which shares the edge of length $2$ of 
$\Delta_4(1;2,1,1,1)$
must be either $\Delta_3(0;2,1,1)$ or $\Delta_3(1;2,1,1)$.
Each of their intersection 
with $\Delta_4(1;2,1,1,1)$ 
is NOT a $1$-tacnodal edge 
by (2) and (4) of Lemma~\ref{nonirr}. 
Therefore, any edge contained in $S$ is NOT a $1$-tacnodal edge. 

If $S$ contains the polytope (e), 
since $\rkexp(S)=\sharp \Delta_{\Z} -4=\rk(S)$, 
the adjacent polytope which shares the edge of length $2$ of 
$\Delta_4(1;2,1,1,1)$ 
must be $\Delta_3(0;2,1,1)$. 
This is a dual subdivision of a tropical 1-tacnodal curve of type (E). 

\vspace{2mm}
\noindent
\underline{\textit{Case (vi) with $N_4=2$.}} 
$S$ has exactly two non-parallel quadrangles and 
the rest of $S$ consists of triangles. 
Since $\rkexp(S)=\sharp V(S) -3$, 
the set $(\Delta \cap \Z^2) \setminus V(S)$ 
consists of exactly one lattice point. 
Therefore $S$ contains 
$\Delta_4 (0;2,1,1,1)$ and 
$\Delta'_4 (0;2,1,1,1)$ such that 
their intersection is a segment whose length is $2$. 
This is because 
a non-parallel quadrangle must satisfy either 
``the number of interior lattice points is non-zero" or 
``the polytope has an edge of length $\ge 2$", by Lemma~\ref{poly2}.  
These polytopes are NOT $1$-tacnodal polytopes by 
Lemma~\ref{nonreg}. 
Also their intersection is NOT a $1$-tacnodal edge by 
Lemma~\ref{nonirr} (5). 

\vspace{2mm}

\item[(C-1)]
In this case, since $N_4+2N_5+3N_6+4N_7 \le 4$ and 
$\sum_{m \ge 2} N'_{2m} =1$, 
the following patterns can occur:
\begin{itemize}
\item[(i)]
$N_6=N'_6=1$, $N_4=0,1$ and $N'_4=0$, 
\item[(ii)]
$N_4=N'_4=1$, $N_6=1$ and $N'_6=0$, 
\item[(iii)]
$N_4=2$, $N'_4=1$ and $N_5=1$, 
\item[(iv)]
$N_4=N'_4=1$ and $N_5=1$, 
\item[(v)]
$N_4=2,3,4$ and $N'_4=1$. 
\end{itemize}

However, we can check that the cases, except case (v) with $N_4=2$, 
are impossible by the same argument as in case (i) in (C-0).

We observe case (v) with $N_4=2$. 
$S$ contains a non-parallel quadrangle $P$ and a parallelogram $Q$, and 
the rest of $S$ consists of triangles. 
Notice that, by Lemma~\ref{poly2}, 
$P$ must satisfy either 
``the number of interior lattice points is non-zero" or 
``the polytope has an edge of length $\ge 2$". 
Since $\rkexp(S)=\sharp V(S)-3$, 
the set $(\Delta \cap \Z^2) \setminus V(S)$ 
consists of exactly one lattice point. 
Therefore $P$ and $Q$ must be either 
\begin{itemize}
\item[\textbullet]
$P=\Delta_4(1;1,1,1,1)$ and $Q=\Delta_4^{\mathrm{par}}(0;1,1)$, or 

\item[\textbullet]
$P=\Delta_4(0;2,1,1,1)$ 
and $Q=\Delta_4^{\mathrm{par}}(0;1,1)$ such that 
the edge of length $2$ of $P$ intersects the triangle 
$\Delta_3(0;2,1,1)$.  
\end{itemize}
In both cases, 
the polytopes are not $1$-tacnodal by 
Lemma~\ref{nonreg}, Lemma~\ref{nonirr} and Remark~\ref{onelength}. 

\vspace{2mm}

\item[(C-2)]
In this case, since $N_4+2N_5+3N_6 \le 3$ and 
$\sum_{m \ge 2} N'_{2m} =2$, 
any possible subdivision satisfies $N_4=3$ and $N'_4=2$. 
Since $\mathcal{N}_S=0$, 
we get $d(S)=0$. 
On the other hand, 
since $\sharp V(S) \le \sharp \Delta _{\Z}-1$ 
by Lemma \ref{poly2}, 
\[
d(S)=\sharp \Delta_{\Z}- \sharp V(S)\ge 1. 
\]
This is a contradiction. 
\end{itemize}

\subsection{Case (D)}
We assume that 
$S$ is NOT a TP-subdivision and satisfies 
$\sharp \partial \Delta_{\Z} - \sharp(V(S)\cap \partial \Delta)=1$. 
By the former condition, 
we can apply the same argument of (C) to case (D) and 
obtain the list of possible subdivisions as follows: 
\begin{itemize}
\item[(1)]
(v) with $N_4=0$ in (C-0),

\item[(2)]
(vi) with $N_4=1$ in (C-0), 

\item[(3)]
(vi) with $N_4=2$ in (C-0), 

\item[(4)]
(v) with $N_4=2$ in (C-1). 

\end{itemize}

\noindent
\underline{\textit{Case (1).}} 
$S$ has exactly one pentagon and 
the rest of $S$ consists of triangles. 
Then $\rkexp(S)=\sharp V(S) -3$ holds. 
By the boundary condition,  
the set $(\Delta \cap \Z^2) \setminus V(S)$ is empty. 
If $S$ contains a triangle $P$ whose intersection with $\partial \Delta$ 
is an edge of length $2$, 
then, by Lemma~\ref{poly2}, 
$S$ does NOT have a pentagon. 
Therefore, the possible pentagon is 
$\Delta_5(0;2,1,1,1,1)$, 
whose intersection with $\partial \Delta$ is an edge of length $2$. 
However, the pentagon does NOT exist by Lemma~\ref{polyt1}. 

\vspace{2mm}
\noindent
\underline{\textit{Case (2).}}
$S$ has exactly one non-parallel quadrangle and 
the rest of $S$ consists of triangles. 
By $\rkexp(S)=\sharp V(S)-2$ and the boundary condition, 
the set 
$(\Delta \cap \Z^2) \setminus V(S)$
consists of one lattice point. 
Therefore, possible non-parallel quadrangle $\Delta_4(I;s,t,u,v)$ 
is one of the following list: 
\begin{itemize}
\item[(a)] $\Delta_4(0;1,1,1,1)$, 
\item[(b)] $\Delta_4(0;2,1,1,1)$, 
\item[(c)] $\Delta_4(1;1,1,1,1)$, 
\end{itemize}

Case (a) does NOT occur 
by Lemma~\ref{poly2}. 
The polytope in (b) is NOT a $1$-tacnodal polytope 
by Lemma~\ref{nonreg} (4). 
Also the polytope in (c) is NOT a $1$-tacnodal polytope 
by \cite[Lemma 4.2 (\textit{i})]{S}. 
Notice that, by Remark~\ref{onelength}, 
the polytope in (c) does NOT have a $1$-tacnodal edge. 

If $S$ contains the polytope in (b), 
since $\rkexp(S)=\sharp \Delta_{\Z} -4$, 
the intersection of the quadrangle $\Delta_4(0;2,1,1,1)$ and 
$\partial \Delta$ is an edge of length $2$.  
Thus the edge is NOT a $1$-tacnodal edge. 

\vspace{2mm}
\noindent
\underline{\textit{Case (3) and (4).}}
$S$ has exactly two non-parallel quadrangles and 
the rest of $S$ consists of triangles. 
By $\rkexp(S)=\sharp V(S) -3$ and 
the boundary condition, 
the set $(\Delta \cap \Z^2) \setminus V(S)$ is empty. 
Therefore, such subdivision $S$ does NOT exist
by the fact that a non-parallel quadrangle must satisfy either 
``the number of interior lattice points is non-zero" or 
``the polytope has an edge of length $\ge 2$" in Lemma~\ref{poly2}.  
Case (4) can be proved by the same argument.  \qed
\vspace{3mm}

\begin{rem}\label{further}
As mentioned in the introduction, 
this research aims to construct 
the tropical version of enumerative geometry of the 1-tacnodal curves.
Therefore, we would like to lift the 1-tacnodal curve 
from a given degenerate 1-tacnodal curve by patchworking.
It is known that there is no obstruction if the singular point is $A_1$, 
and this is still true even if it is $A_2$, 
which can be checked by a numerical criterion of 
the vanishing of the obstruction constructed by Shustin 
(See \cite[Theorem 4.1]{Sglu}, or \cite[Lemma 5.4]{S} 
for a tropical version).
But, unfortunately, this criterion does not work if it is $A_3$ 
because of the following reason:

We recall a sufficient condition to apply patchworking 
\cite[Lemma 5.5 (\textit{ii})]{S}, called \textit{transversality}. 
Let $S$ be the dual subdivision of a tropical curve $T$, 
$\Delta_1, \ldots, \Delta_N$ 
be the $2$-dimensional polytopes of $S$ and 
$(C_1, \ldots, C_N)$ 
be a collection of complex curves such that 
the Newton polytope of the defining polynomial $f_i$ of $C_i$ 
is $\Delta_i \in S$ and, if 
$\sigma_{ij}:=\Delta_i \cap \Delta_j \neq \emptyset$, 
$f_i^{\sigma_{ij}}=f_j^{\sigma_{ij}}$.

For an irreducible curve $C_k$ for some $k \in \{1,\ldots N\}$, 
there is a union $\Delta_k^-$ of edges of $\Delta_k$ 
such that $C_k$ satisfies the following inequality: 
\[
{\sum}'b(C_k,\xi)
+{\sum}'' \tilde{b}(C_k, Q) 
+{\sum}'''\bigl( (C_k \cdot X(\sigma))-\epsilon \bigr)
<
\sum_{\sigma \subset \partial \Delta}(C_k \cdot X(\sigma)),
\]
where 
\begin{itemize}
\item 
if $C$ has a tacnode, then $b(C,\xi) = 1$ for both branches, 
if $C$ is locally given by 
$\{ x^{pr} + y^{qr} = 0 \}$ for coprime integers $p,q$, 
then 
$\tilde{b}(C,\xi) = p+q-1$ for each branch,

\item
${\sum}'$ ranges over all local branches $\xi$ of $C_k$, 
centered at the points $z \in \mathrm{Sing}(C_k)\cap (\C^*)^2$,

\item
${\sum}''$ ranges over all local branches $Q$ of $C_k$, 
centered at the points
$z \in \mathrm{Sing}(C_k) \cap X(\partial \Delta_k)$, and 

\item
${\sum}'''$ ranges over all non-singular points $z$ of $C_k$ on 
$X(\partial \Delta_k)$ with $\epsilon = 0$ if 
$\sigma \subset \Delta_k^-$ 
and $\epsilon=1$ otherwise,
\end{itemize}
then $C_k$ is transversal.

Let $V \subset X(\Delta_{\III})$ be a curve 
which is constructed in Lemma~\ref{singedge}. 
We can easily check 
\[
{\sum}'b(V,\xi) = 0,
\quad
{\sum}'' \tilde{b}(V, Q) = 4, 
\quad
{\sum}'''\bigl( (V \cdot X(\sigma))-\epsilon \bigr) \ge 0 
\quad
\text{and}
\sum_{\sigma \subset \partial \Delta}(V \cdot X(\sigma))
=4. 
\]
Therefore $V$ does not satisfy the above inequality. 
\end{rem}


\end{document}